 \documentclass{amsart}
\usepackage{graphicx}

\usepackage{amsfonts}
\usepackage{amssymb}
\usepackage{amsmath}
\usepackage{amsthm}
\usepackage{amscd}

\vsize=29.7truecm \hsize=21truecm \topmargin=-0.5truecm
\oddsidemargin=-0.5truecm \evensidemargin=0.5truecm
\textheight=23.5truecm \textwidth=16.5truecm \flushbottom
\def\vbar{\mathchoice{\vrule height6.3ptdepth-.5ptwidth.8pt\kern- .8pt}
{\vrule height6.3ptdepth-.5ptwidth.8pt\kern-.8pt} {\vrule
height4.1ptdepth-.35ptwidth.6pt\kern-.6pt} {\vrule
height3.1ptdepth-.25ptwidth.5pt\kern-.5pt}}
\def\fudge{\mathchoice{}{}{\mkern.5mu}{\mkern.8mu}}
\def\bbc#1#2{{\rm \mkern#2mu\vbar\mkern-#2mu#1}}
\def\bbb#1{{\rm I\mkern-3.5mu #1}}
\def\bba#1#2{{\rm #1\mkern-#2mu\fudge #1}}
\def\bb#1{{\count4=`#1 \advance\count4by-64 \ifcase\count4\or\bba
A{11.5}\or \bbb B\or\bbc C{5}\or\bbb D\or\bbb E\or\bbb F \or\bbc
G{5}\or\bbb H\or \bbb I\or\bbc J{3}\or\bbb K\or\bbb L \or\bbb
M\or\bbb N\or\bbc O{5} \or \bbb P\or\bbc C{5}\or\bbb B\or\bbc
S{4.2}\or\bba T{10.5}\or\bbc U{5}\or \bba V{12}\or\bba
W{16.5}\or\bba X{11}\or\bba Y{11.7}\or\bba Z{7.5}\fi}}

\newcommand{\K}{{\mathbb{K}}}

\newtheorem{df}{Definition}[section]
\newtheorem{thm}{Theorem}[section]
\newtheorem{cor}{Corollary}[section]
\newtheorem{rem}{Remark}[section]

\newtheorem{prop}{Proposition}[section]

\newtheorem{lem}{Lemma}[section]

\begin{document}

\date{}
\title{BiHom pre-Lie superalgebras and related structures}
\author{OTHMEN NCIB }
\address{University of Gafsa, Faculty of Sciences Gafsa, 2112 Gafsa, Tunisia}
\email{othmenncib@yahoo.fr}

 \maketitle{}

 \begin{abstract}
 Throughout this paper, we will study Rota-Baxter operators and super $\mathcal{O}$-operator of associative BiHom superalgebras, BiHom Lie superalgebras, BiHom pre-Lie superalgebras and BiHom-$L$-dendriform superalgebras. Then we give some properties of BiHom pre-Lie superalgebras constructed from associative BiHom superalgebras, BiHom Lie superalgebras and BiHom-$L$-dendriform superalgebras. As applications, we study the theory of cohomology groups of BiHom-pre-Lie superalgebras.
 \end{abstract}
\section*{Introduction}

The theory of Hom-structures may found in the physics literature around $1990$, conserning $q$-deformation of algebras of vector fields, especially Witt and Virasoro algebras (see \cite{Aizawa-N-Sato-H, Chaichian-Kulish-Lukierski, Curtright-Zachos, Liu-K-Q} ).

A BiHom-algebra is an algebra in such a way that the identities defining the structure are twisted by two homomorphisms $\alpha,\;\beta$. This class of algebras was introduced from a categorical approach in \cite{Graziani-Makhlouf-Menini-Panaite} as an extension of the class of Hom-algebras. More applications of BiHom-Lie algebras, biHom-algebras, BiHom-Lie superalgebras and BiHom-Lie admissible superalgebras can be found in (\cite{Cheng-Qi, S-Wang-S-Guo}).\\

A Rota-Baxter algebra is an associative algebras together with a linear operator that satisfies an identity abstructed from the integration by part formula in calculus. The study of Rota-Baxter algebra appeared for the first time in the work of the mathematician G. Baxter \cite{Baxter1960} in $1960$ and were then intensively studied by F. V. Atkinson \cite{Atkinson1967}, J. B. Miller \cite{Miller1969}, G.-C. Rota \cite{Rota1995}, P. Cartier \cite{Cartier1972} and  more recently they reappeared in the work of L. Guo \cite{Guo-Introd} and K. Ebrahimi-Fard \cite{Ebrahimi-loday.algebras}.\\

Pre-Lie algebras are a class of nonassociative algebras coming from the study of
convex homogeneous cones \cite{Vinberg}, affine manifolds and affine structures on Lie groups \cite{Koszul}, and the aforementioned cohomologies of associative algebras \cite{Gerstenhaber}. They also appeared in many fields in mathematics and mathematical physics, such as complex and symplectic structures on Lie groups and Lie algebras \cite{Andrada-Salamon,Chu,Dardié-Medina1,Dardié-Medina2,Lichnerowicz-Medina}, phases spaces of Lie algebras \cite{Bai phase spaces,Kupershmidt2}, integrable systems \cite{Bordemann}, classical and quantum Yang-Baxter equations \cite{Diata-Medina}, combinatorics \cite{Ebrahimi-loday.algebras}, Poisson brackets and infinite dimensional Lie algebras, vertex algebras, quantum field theory \cite{Connes-Kreimer}, and operads \cite{Chapoton-Livernet}. See \cite{Burde} for  a survey. Recently, pre-Lie superalgebras, the  $\mathbb{Z}_2$-graded version of pre-Lie algebras, also appeared in many others fields; see for example \cite{Chapoton-Livernet,Gerstenhaber,Mikhalev}. To our knowledge, they were first introduced by Gerstenhaber in $1963$ to study the cohomology structure of associative algebras \cite{Gerstenhaber}. They are a class of natural algebraic appearing in many fields in mathematics and mathematical physics, especially in  super-symplectic geometry, vertex superalgebras and graded classical Yang-Baxter equation \cite{Bai O-operators,Bai-algebraic}. Recently, the classifications of complex pre-Lie superalgebras in dimensions two and three were given by R. Zhang and C.M.  Bai \cite{Bai and Zhang classif}\\

 It is clear that the  Rota-Baxter operator is to use in several constructions, one of the most important is the construction of BiHom-pre-Lie superalgebra from a BiHom associative superalgebra. Let $(\mathcal{A},\cdot,\alpha,\beta)$ be a BiHom associative superalgebra and $R$ be a Rota-Baxter operator of weight $\lambda$ on $\mathcal{A}$, which means that it satisfies, for any homogeneous elements $x,y$ in $\mathcal{A}$, the identity
\begin{equation}\label{R-B assoc}
R(x)\cdot R(y)=R\Big( R(x)\cdot y+xR(y)+\lambda x\cdot y\Big).
\end{equation}
If $\lambda=0$ (resp. $\lambda=-1$),  the product
\begin{equation}\label{intr-ass=pre-Lie0}
    x\circ y=R(x)\cdot y-(-1)^{|x||y|}\alpha^{-1}\beta(y) \cdot R(\alpha\beta^{-1}(x)),~~\forall~~x,y\in\mathcal{H}(\mathcal{A})
\end{equation}
resp.
\begin{equation}\label{intr-ass=pre-Lie}
    x\circ y=R(x)\cdot y-(-1)^{|x||y|}\alpha^{-1}\beta(y) \cdot R(\alpha\beta^{-1}x)-x\cdot y,~~\forall~~x,y\in\mathcal{H}(\mathcal{A})
\end{equation}
defines a BiHom-pre-Lie superalgebra $($see Theorem \ref{RB+ass=BiHompreLie super}$)$.\\

The notion of dendriform algebra was introduced by Loday (\cite{Loday-dialgebras}) in $1995$ with motivation from algebraic $K$-theory and has been studied quite extensively with connections to several areas in mathematics and physics, including operads, homology, Hopf algebras, Lie and Leibniz algebras, combinatorics, arithmetic and quantum field theory and so on (see \cite{Ebrahimi-Manchon-Patr} and the references therein). The relationship between dendriform algebras, Rota-Baxter algebras and pre-Lie algebras was given by M. Aguiar and K. Ebrahimi-Fard \cite{Aguiar2000,Ebrahimi-loday.algebras,Ebrahimi-assoc-Nijenhuis}. C. Bai, L. Liu, L. Guo and X. Ni, generalized the concept of Rota-Baxter operator and introduced a new class of algebras, namely, $L$-dendriform algebras, in
\cite{ Bai-Liu OOperaorLie,Bai-Liu OOperaorAss,Bai-Liu L-dendrifom}. Moreover, there is the following relationship among Lie superalgebras, associative superalgebras, pre-Lie superalgebras and dendriform superalgebras in the sense of commutative diagram of categories:
\begin{equation}\label{diag1}
\begin{array}{ccc}
  \text{Lie superalgebra} & \longleftarrow & \text{pre-Lie superalgebra}\\
  \uparrow &  & \uparrow \\
  \text{associative superalgebra} & \longleftarrow & \text{dendriform superalgebra}
\end{array}
\end{equation}

Later quite a few more similar algebra structures have been introduced, such as quadrialgebras of Aguiar and Loday \cite{Aguiar-Loday2004}. In order to extend the commutative diagram \eqref{diag1} at the level of associative superalgebras (the
bottom level of the commutative diagram \eqref{diag1}) to the more Loday superalgebras, it is natural to
find the corresponding algebraic structures at the level of Lie superalgebras which extends the top
level of commutative diagram \eqref{diag1}. We will show that the $L$-dendriform superalgebras are chosen in
a certain sense such that the following diagram including the diagram \eqref{diag1} as a sub-diagram is
commutative:
\begin{equation}\label{diag2}
\begin{array}{ccc}
  \text{Lie superalgebra} & \longleftarrow  \text{pre-Lie superalgebra} &   \longleftarrow  \text{$L$-dendriform superalgebra}\\
  \uparrow &   \uparrow &  \uparrow  \\
  \text{associative superalgebra} & \longleftarrow  \text{dendriform superalgebra}& \longleftarrow  \text{ quadri-superalgebra}
\end{array}
\end{equation}

Recently, the notion of Rota-Baxter operator on a bimodules was introduced by M. Aguiar \cite{Aguiar2004}. The construction of associative, Lie, pre-Lie and $L$-dendriform superalgebras are extended to the corresponding categories of bimodules.\\
The main purpose of this paper is to study, through Rota-Baxter operators and $\mathcal{O}$-operators,  the relationship between BiHom associative superalgebras, BiHom-Lie superalgebras, BiHom-pre-Lie superalgebras and BiHom-$L$-dendriform superalgebras.\\
 This paper is organized as follows. In Section $1$, we recall some definitions of BiHom associative superalgebras, BiHom-Lie superalgebras and BiHom-pre-Lie superalgebras and we introduce the notion of super $\mathcal{O}$-operator of these BiHom superalgebras that generalizes the notion of Rota-Baxter operators. We show that every Rota-Baxter BiHom associative superalgebra of weight $\lambda=-1$ gives rise to a Rota-Baxter BiHom-Lie superalgebra. Moreover, a super $\mathcal{O}$-operator on  a BiHom-Lie superalgebra $($of weight zero$)$ gives rise to  a BiHom-pre-Lie superalgebra. In Section $2$, we introduce the notion of BiHom-$L$-dendriform superalgebra and then study some fundamental properties of BiHom-$L$-dendriform superalgebras in terms of  super $\mathcal{O}$-operator of BiHom-pre-Lie superalgebras. Their relationship with BiHom associative superalgebras are also described. In Section $3$, we give the cohomology theory of a BiHom-pre-Lie superalgebra $(\mathcal{A},\cdot,\alpha,\beta)$ with the coefficient in a representation $(V,l,r,\alpha_V,\beta_V)$. The main contribution is to
define the coboundary operator $ \delta: C^n(\mathcal{A}; V )\longrightarrow C^{n+1}(\mathcal{A}; V )$.\\
Throughout this paper, all superalgebras are finite-dimensional and are over a field $\mathbb{K}$ of characteristic zero.  Let $(\mathcal{A},\circ,\alpha,\beta)$ be a BiHom superalgebra, then $L_\circ$ and $R_\circ$ denote the even  left and right multiplication operators $L_\circ,R_\circ:\mathcal{A}\rightarrow End(\mathcal{A})$ defined as  $L_\circ(x)(y)=(-1)^{|x||y|}R_\circ(y)(x)=x\circ y$ for all homogeneous element $x,y$ in $\mathcal{A}$.
 In particular, when $(\mathcal{A},[ \ ,\ ],\alpha,\beta)$ is a BiHom-Lie superalgebra, we let $ad(x)$ denote the adjoint operator, that is, $ad(x)(y)=[x,y]$ for all homogeneous element $x,y$ in $\mathcal{A}$.

 \section{ Rota-Baxter BiHom associative superalgebras, BiHom pre-Lie superalgebras and BiHom Lie superalgebras}
Let $(\mathcal{A},\circ)$ be an algebra over a field $\mathbb{K}$. It is said to be a superalgebra if the underlying vector space of $\mathcal{A}$ is $\mathbb{Z}_2$-graded, that is, $\mathcal{A}=\mathcal{A}_0\oplus \mathcal{A}_1$, and $\mathcal{A}_i\circ \mathcal{A}_j\subset \mathcal{A}_{i+j}$, for $i,j\in \mathbb{Z}_2$. An element of $\mathcal{A}_0$ is said to be  even and an element of $\mathcal{A}_1$ is said to be odd. The elements of $\mathcal{A}_{j},~~j \in \mathbb{Z}_2$, are said to be homogenous and of parity $j$. The parity of a homogeneous element $x$ is denoted by $|x|$ and we refer to the set of homogeneous elements of $\mathcal{A}$ by $\mathcal{H}(\mathcal{A})$.\\

We extend to graded case the concepts of  $\mathcal{A}$-bimodule $\K$-algebra,  $\mathcal{O}$-operator and extended  $\mathcal{O}$-operator introduced in \cite{Bai-Liu OOperaorAss}.
\begin{df}
An associative superalgebra is a pair $(\mathcal{A},\mu)$ cosisting of a $\mathbb{Z}_2$-graded vector space $\mathcal{A}$ and an even bilinear map $\mu:\mathcal{A}\otimes\mathcal{A}\longrightarrow\mathcal{A}$, (i.e\;:\;$\mu(\mathcal{A}_i,\mathcal{A}_j)\subseteq\mathcal{A}_{i+j},\forall i,j\in\mathbb{Z}_2$) satisfying for any $x,y,z \in \mathcal{A}$
$$\mu(x,\mu(y,z))=\mu(\mu(x,y),z).$$
\end{df}

\begin{df}\ \begin{enumerate}
\item A BiHom-associative superalgebra is a tuple  $(\mathcal{A},\mu,\alpha,\beta)$ consisting of a linear space $\mathcal{A}$, a bilinear map $\mu:\mathcal{A}\otimes\mathcal{A}\longrightarrow \mathcal{A}$ and two even linear maps $\alpha,\beta:\mathcal{A}\longrightarrow \mathcal{A} $, satisfying the following conditions for all $a,b,c \in \mathcal{A}$.
\begin{eqnarray*}
 \alpha \circ \beta &=& \beta \circ \alpha, \\
 \alpha(\mu(a,b))&=&\mu(\alpha(a),\alpha(b))\;\; and\;\;\beta(\mu(a,b))=\mu(\beta(a),\beta(b))\;\;(multiplicativity),\\
 \mu(\alpha(a),\mu(b,c))&=&\mu(\mu(a,b),\beta(c)) (BiHom-associativity).
\end{eqnarray*}
A morphism $f : (\mathcal{A}, \mu_\mathcal{A}, \alpha_\mathcal{A}, \beta_\mathcal{A}) \rightarrow (\mathcal{B}, \mu_\mathcal{B}, \alpha_\mathcal{B}, \beta_\mathcal{B})$ of BiHom-associative superalgebras is a linear
map $f : \mathcal{A} \rightarrow \mathcal{B}$ such that $\alpha_\mathcal{B} \circ f = f \circ \alpha_\mathcal{A},\; \beta_\mathcal{B} \circ f = f \circ \beta_\mathcal{A}$
 and $f \circ \mu_\mathcal{A} = \mu_\mathcal{B} \circ (f \otimes f)$.

\item
 Let $(\mathcal{A},\mu,\alpha,\beta)$ be a BiHom associative superalgebras and $V$ be a $\mathbb{Z}_2$-graded vector space. Let $l,\;r\;:\;\mathcal{A}\rightarrow End(V)$ be two even linear maps and $\alpha_V,\;\beta_V\;:\;V\rightarrow V$ be two even linear maps. A tuple $(V,l,r,\alpha_V,\beta_V)$ is called an $\mathcal{A}$-bimodule if for all $x,\;y\in\mathcal{H}(\mathcal{A})$ and $v\in\mathcal{H}(V)$
\begin{eqnarray*}
l(\alpha(x))l(y)(v)&=&l(\mu(x,y))(\beta_V(v))\\
l(\alpha(x))r(y)(v)&=&r(\beta(y))l(x)(v)\\
r(\mu(x,y))\alpha_V(v)&=&r(\beta(y))r(x)(v)\\
l(\alpha(x))\alpha_V(v)&=&\alpha_V(l(x))(v)\\
r(\alpha(x))\alpha_V(v)&=&\alpha_V(r(x)(v))\\
l(\beta(x))(\beta_V(v))&=&\beta_V(l(x)(v))\\
r(\beta(x))(\beta_V(v))&=&\beta_V(r(x)(v))\\
\end{eqnarray*}
Moreover, the tuple $(V,\mu_V,l,r,\alpha_V,\beta_V)$ is said to be an  $\mathcal{A}$-bimodule $\mathbb{K}$-superalgebra if $(V,l,r,\alpha_V,\beta_V)$  is an  $\mathcal{A}$-bimodule compatible with the multiplication $\mu_V$ on $V$, that is, for all $x,y \in\mathcal{H}(\mathcal{A})$ and $u,v\in \mathcal{H}(V),$
\begin{eqnarray*}
l(\alpha(x))\mu_V(u,v)&=&\mu_V(l(x)(u),\beta_V(v)),\\
\mu_V(\alpha_V(u),r(x)(v))&=&r(\beta(x))(\mu_V(u,v)),\\
\mu_V(\alpha_V(u),l(x)(v))&=&\mu_V(r(x)(u),\beta_V(v)).
\end{eqnarray*}

 \item Fix $\lambda\in \K$.  A pair $(T,T')$ of even linear maps $T,T':V\longrightarrow \mathcal{A}$ is called an extended super $\mathcal{O}$-operator with modification $T'$ of
weight $\lambda$  associated to the bimodule $(V,l,r,\alpha_V,\beta_V)$ if $T$ satisfies
\begin{eqnarray}
& \alpha\circ T=T\circ\alpha_V\;\;and\;\;\beta\circ T=T\circ\beta_V,\\
& \alpha\circ T'=T'\circ\alpha_V\;\;and\;\;\beta\circ T'=T'\circ\beta_V,\\
& \lambda l(T'(u))v=\lambda r(T'(v))u,\\
&  T(u)T(v)=T\Big(l(T(u))v+(-1)^{|u||v|}r(T(v))u\Big)+\lambda T'(u)T'(v),~~\forall~~u,v \in \mathcal{H}(V).
 \end{eqnarray}
 \item  An even  linear map $T:V\longrightarrow \mathcal{A}$ is called a super $\mathcal{O}$-operator of
weight $\lambda$  associated to the bimodule $\K$-superalgebra  $(V,\mu_V,l,r,\alpha_V,\beta_V)$ if it satisfies
\begin{eqnarray}
& \alpha\circ T=T\circ\alpha_V\;\;and\;\;\beta\circ T=T\circ\beta_V,\nonumber\\
&  T(u)T(v)=T\Big(l(T(u))v+(-1)^{|u||v|}r(T(v))u+\lambda \mu_V(u,v)\Big),~~\forall~~u,v \in \mathcal{H}(V).
 \end{eqnarray}
\end{enumerate}
\end{df}
Notice that the notions of  super $\mathcal{O}$-operator and extended  super $\mathcal{O}$-operator coincide when $\lambda =0$.\\



In particular, a super $\mathcal{O}$-operator of weight $\lambda\in \mathbb{K} $ associated to the bimodule $\K$-superalgebra $(\mathcal{A},\mu_A,L_\mu,R_\mu,\alpha,\beta)$ is called a Rota-Baxter operator of weight $\lambda$ on $\mathcal{A}$, that is, $R$ satisfies  the identity \eqref{R-B assoc}.
We denote by a tuple $(\mathcal{A},\mu,R,\alpha,\beta)$ the Rota-Baxter BiHom associative superalgebra.\\


We define now Rota-Baxter operators on $\mathcal{A}$-bimodules.
\begin{df}Let $(\mathcal{A},\mu,R,\alpha,\beta)$ be a Rota-Baxter BiHom associative superalgebra of weight zero. A Rota-Baxter operator on an  $\mathcal{A}$-bimodule $V$ $($relative to $R)$ is a map $R_V:V\longrightarrow V$ such that for all
 $x \in \mathcal{H}(\mathcal{A})$ and $v \in \mathcal{H}(V)$
\begin{eqnarray*}
    & & \ \ \ \alpha_V\circ R_V=R_V\circ\alpha_V \;\; and\;\;\beta_V\circ R_V=R_V\circ\beta_V,\\
    & & \ \ \ R(x)R_V(v) = R_V \Big(R(x)v+x R_V(v)\Big),\\
    & & \ \ \ R_V(v)R(x) = R_V \Big(R_V(v)x+v R(x) \Big).
    \end{eqnarray*}
\end{df}
We have similar definitions on BiHom Lie superalgebras.
 \begin{df}\ \begin{enumerate}
\item A BiHom Lie superalgebra is a tuple $(\mathcal{A}, [~~,~~],\alpha,\beta)$ consisting of a $\mathbb{Z}_2$-graded vector space $\mathcal{A}$, an even
bilinear map $[~~,~~] : \mathcal{A}\otimes \mathcal{A} \longrightarrow \mathcal{A},~~([\mathcal{A}_i,\mathcal{A}_j]\subseteq \mathcal{A}_{i+j},~~\forall~~i,j\in \mathbb{Z}_2)$ and two even linear maps $\alpha,\beta:\mathcal{A}\rightarrow\mathcal{A}$  satisfying for all $\ x,y,z \in \mathcal{H}(\mathcal{A})$,
\begin{eqnarray}
 &&\alpha\circ\beta =\beta\circ\alpha,\\
 \label{B-H-skwesym}
 &&[\beta(x),\alpha(y)] = -(-1)^{|x||y|}[\beta(y),\alpha(x)],\quad \text{(BiHom super skew-symmetry)}\\
\label{B-H-sJ}
&& \displaystyle\circlearrowleft_{x,y,z}(-1)^{|x||z|} [\beta^2(x),[\beta(y),\alpha(z)]]=0 , \quad \text{(BiHom super-Jacobi identity)}.
\end{eqnarray}
\item
 A representation of a BiHom-Lie superalgebra $(\mathcal{A},[\cdot,\cdot],\alpha,\beta)$ on
 a vector superspace $V$ with respect to commuting even linear maps $\alpha_V,\beta_V:V\rightarrow V$ is an even linear map
  $\rho:\mathcal{A}\longrightarrow End(V)$, such that for all
  $x,y\in \mathcal{H}(\mathcal{A})$, the following equalities are satisfied
\begin{eqnarray}
\label{bihom-lie-rep-1}\rho(\alpha(x))\circ \alpha_V&=&\alpha_V\circ \rho(x),\\
\label{bihom-lie-rep-2} \rho(\beta(x))\circ \beta_V&=&\beta_V \circ \rho(x),\\
\label{bihom-lie-rep-3}\rho([\beta(x),y])\circ  \beta_V &=&\rho(\alpha \beta(x))\circ\rho(y)-(-1)^{|x||y|}\rho(\beta(y))\circ\rho(\alpha(x)).
\end{eqnarray}
The pair $(V,\rho,\alpha_V,\beta_V)$ is said to be an $\mathcal{A}$-module or a representatin of $(\mathcal{A},[\cdot,\cdot],\alpha,\beta)$.
 \\ The tuple $(V,[ \ ,\ ]_V,\rho,\alpha,\beta)$, where $[ \ ,\ ]_V$ is a super skew-symmetric bracket,  is said to be an $\mathcal{A}$-module $\K$-superalgebra if,  for $x\in \mathcal{H}(\mathcal{A})$ and $v,w\in \mathcal{H}(V)$
 $$ \rho(\beta^2(x))[ \beta_V(v),\alpha_V(w) ]_V=[\rho(\beta(x))(\alpha_V(v)), \beta_V^2(w)]_V+(-1)^{|x||v|}[ \beta_V^2(v),\rho(\beta(x))(\alpha_V(w))]_V.$$
\item Let $(\mathcal{A},[\cdot,\cdot],\alpha,\beta)$ be a BiHom-Lie superalgebra and $(V,\rho,\alpha_V,\beta_V)$ be a representation.  An even linear map $T: V \to A$ is called a super $\mathcal{O}$-\textbf{operator} of weight $\lambda\in\mathbb{K}$
associated with an $\mathcal{A}$-module $\mathbb{K}$-superalgebra $(V,[ \ ,\ ]_V,\rho,\alpha,\beta)$, if it satisfies
\begin{eqnarray}
& \alpha\circ T=T\circ\alpha_V\;\;and\;\;\beta\circ T=T\circ\beta_V,\\
\label{O-operator} &[T(u),T(v)]=T\Big(\rho(T(u))v-(-1)^{|u||v|}\rho(T(\alpha_V^{-1}\beta_V(v)))\alpha_V\beta_V^{-1}(u)+\lambda[u,v]_V\Big),\ \ \forall u,v \in \mathcal{H}(V).
\end{eqnarray}
\end{enumerate}
 In particular, a super $\mathcal{O}$-operator of weight $\lambda \in \mathbb{K}$ associated to the bimodule $(\mathcal{A},L_\circ,R_\circ,\alpha,\beta)$ is called a Rota-Baxter operator of weight $\lambda \in \mathbb{K}$ on $(\mathcal{A},[~~,~~],\alpha,\beta)$, that is, $R$ satisfies for all $x,y,z$ in $\mathcal{H}(\mathcal{A})$
\begin{equation}\label{Rota-baxter-Lie}
    [R(x),R(y)]=R\Big([R(x),y]+[x,R(y)]+\lambda [x,y]\Big).
\end{equation}

The tuple $(\mathcal{A},[~~,~~],R,\alpha,\beta)$ refers to a Rota-Baxter BiHom-Lie superalgebra .
\end{df}
\begin{df}
Let $(\mathcal{A},[ ~~,~~ ],R,\alpha,\beta)$ and $(\mathcal{A}',[ ~~,~~ ]',R',\alpha',\beta')$ be two Rota-Baxter BiHom-Lie superalgebras. An even homomorphism $f:(\mathcal{A},[ ~~,~~ ],R,\alpha,\beta)\longrightarrow (\mathcal{A}',[ ~~,~~ ]',R',\alpha',\beta')$ is said to be a morphism of two Rota-Baxter BiHom-Lie  superalgebras if, for all $x,y\in \mathcal{H}(\mathcal{A})$,
\begin{eqnarray*}
&&f\circ\alpha=\alpha'\circ f\;\;\; and\;\;\; f\circ\beta=\beta'\circ f,\\
&&f\circ R = R'\circ f,\\
&&f([x,y]) = [f(x),f(y)]'.
\end{eqnarray*}
\end{df}
\begin{prop}
 Let $(\mathcal{A},\mu,R,\alpha,\beta)$ be a Rota-Baxter BiHom associative superalgebra of weight $\lambda \in \mathbb{K}$. Then the tuple  $(\mathcal{A},[ ~~,~~ ],R,\alpha,\beta)$, where $[x,y]=\mu(x,y)-(-1)^{|x||y|}\mu(y,x)$, is a Rota-Baxter BiHom-Lie superalgebra of weight $\lambda \in \mathbb{K}$.
\end{prop}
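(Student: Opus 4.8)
The plan is to verify, in order, that $(\mathcal{A},[~,~],\alpha,\beta)$ satisfies the three axioms of a BiHom-Lie superalgebra and then that $R$ obeys the Lie Rota-Baxter identity \eqref{Rota-baxter-Lie}. The commutativity $\alpha\circ\beta=\beta\circ\alpha$ and the multiplicativity $\alpha([x,y])=[\alpha(x),\alpha(y)]$, $\beta([x,y])=[\beta(x),\beta(y)]$ are inherited immediately: since $\alpha,\beta$ are even and multiplicative for $\mu$, they pass through each monomial $\mu(x,y)$ and $\mu(y,x)$ and preserve the sign $(-1)^{|x||y|}$, hence through the whole bracket. These I would dispose of first.

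For the BiHom super skew-symmetry \eqref{B-H-skwesym}, the bracket is antisymmetric by construction, $[a,b]=-(-1)^{|a||b|}[b,a]$, as one sees by swapping the two monomials in $[a,b]=\mu(a,b)-(-1)^{|a||b|}\mu(b,a)$; specializing $a=\beta(x)$, $b=\alpha(y)$ and using evenness of the twists reduces \eqref{B-H-skwesym} to this manifest antisymmetry, the point being that the Koszul sign $(-1)^{|x||y|}$ is carried along correctly. The substantive axiom is the BiHom super-Jacobi identity \eqref{B-H-sJ}. Here I would expand the cyclic sum $\circlearrowleft_{x,y,z}(-1)^{|x||z|}[\beta^2(x),[\beta(y),\alpha(z)]]$ into twelve $\mu$-monomials, distribute the twist maps onto the arguments by multiplicativity, and then apply BiHom-associativity $\mu(\alpha(a),\mu(b,c))=\mu(\mu(a,b),\beta(c))$ to trade left-nested products for right-nested ones. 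The placement of $\beta^2$ on the outside and of $\beta,\alpha$ inside the inner bracket is precisely what lets associativity apply once the twists are moved inward; the twelve terms then cancel in pairs after the signs are tracked. I expect this sign-and-associativity bookkeeping to be the main obstacle: it is the BiHom-super analogue of the classical fact that the commutator of an associative product is a Lie bracket, and essentially all the difficulty is concentrated in matching signs across the three cyclic summands.

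Finally, for the Rota-Baxter identity \eqref{Rota-baxter-Lie} I would substitute the bracket on the left, $[R(x),R(y)]=\mu(R(x),R(y))-(-1)^{|x||y|}\mu(R(y),R(x))$, and apply the associative Rota-Baxter identity \eqref{R-B assoc} to each of $\mu(R(x),R(y))$ and $\mu(R(y),R(x))$, pulling a single $R$ outside. Expanding the right-hand side $R\big([R(x),y]+[x,R(y)]+\lambda[x,y]\big)$ by the definition of the bracket yields the same six $\mu$-monomials inside $R$, term by term and with matching signs; this comparison is direct and, unlike the Jacobi identity, needs neither the twist maps nor their inverses. The intertwining relations $R\circ\alpha=\alpha\circ R$ and $R\circ\beta=\beta\circ R$ required of a Rota-Baxter operator on the Lie structure are inherited verbatim from the hypothesis that $R$ is a super $\mathcal{O}$-operator on the associative structure, which completes the verification.
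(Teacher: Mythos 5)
The paper states this proposition without proof, so there is nothing to compare route-by-route; judged on its own terms, your outline breaks down at precisely the two steps you treat as routine. The BiHom super skew-symmetry \eqref{B-H-skwesym} reads $[\beta(x),\alpha(y)]=-(-1)^{|x||y|}[\beta(y),\alpha(x)]$: the twists $\beta$ and $\alpha$ stay attached to the first and second \emph{slots}; they do not travel with the elements. Plain antisymmetry of the commutator $[a,b]=\mu(a,b)-(-1)^{|a||b|}\mu(b,a)$ only yields $[\beta(x),\alpha(y)]=-(-1)^{|x||y|}[\alpha(y),\beta(x)]$, which is a different statement when $\alpha\neq\beta$. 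Concretely, with the untwisted bracket the axiom \eqref{B-H-skwesym} is equivalent to
\[
\mu(\beta(x),\alpha(y))-\mu(\alpha(x),\beta(y))
=(-1)^{|x||y|}\bigl(\mu(\alpha(y),\beta(x))-\mu(\beta(y),\alpha(x))\bigr),
\]
which does not follow from multiplicativity or BiHom-associativity; a Yau-twist example ($\mu(x,y)=\alpha(x)\cdot\beta(y)$ on $\mathbb{K}[t]$ with $\alpha(t)=at$, $\beta(t)=bt$, $a\neq b$) already violates it. The same misplacement of twists blocks the pairwise cancellation you predict in the super-Jacobi identity \eqref{B-H-sJ}: BiHom-associativity $\mu(\alpha(a),\mu(b,c))=\mu(\mu(a,b),\beta(c))$ applies only to monomials whose twists sit in the correct slots, and expanding the cyclic sum with the untwisted commutator does not produce such monomials. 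So the reduction of \eqref{B-H-skwesym} to ``manifest antisymmetry'' and the claimed twelve-term cancellation are both invalid as written.

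The repair --- and the convention this paper uses everywhere else, e.g.\ in its definition of BiHom-Lie admissible superalgebras and in the sub-adjacent BiHom-Lie superalgebra $[x,y]_C$ of a BiHom-pre-Lie superalgebra --- is to assume $\alpha,\beta$ bijective and take the twisted supercommutator
\[
[x,y]=\mu(x,y)-(-1)^{|x||y|}\mu\bigl(\alpha^{-1}\beta(y),\alpha\beta^{-1}(x)\bigr);
\]
the bracket displayed in the statement should be read this way (as literally printed, the proposition is false). With the twisted bracket, skew-symmetry is a one-line computation and the Jacobi terms do cancel under BiHom-associativity. Your final paragraph, the Rota-Baxter verification, is correct in substance and survives the correction: since a Rota-Baxter operator commutes with $\alpha$ and $\beta$, hence with $\alpha^{-1}\beta$ and $\alpha\beta^{-1}$, one has $\mu(\alpha^{-1}\beta(R(y)),\alpha\beta^{-1}(R(x)))=\mu(R(\alpha^{-1}\beta(y)),R(\alpha\beta^{-1}(x)))$, and applying \eqref{R-B assoc} to each of the two monomials of $[R(x),R(y)]$ reproduces exactly the monomials of $R\bigl([R(x),y]+[x,R(y)]+\lambda[x,y]\bigr)$, term by term with matching signs.
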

We introduce the notion of super $\mathcal{O}$-operators of BiHom-pre-Lie superalgebras and study some properties over BiHom-Lie superalgebras and BiHom-pre-Lie superalgebras.

\begin{df}\cite{Abdaoui-Mabrouk-Makhlouf}
Let $\mathcal{A}$ be a $\mathbb{Z}_2$ graded vector space and $\circ\;:\;\mathcal{A}\otimes\mathcal{A}\longrightarrow\mathcal{A}$ be an even binary
operation. The pair $(\mathcal{A},\circ)$ is called a pre-Lie superalgebra if, for $x,y,z$ in $\mathcal{H}(\mathcal{A})$, the associator
$$as(x,y,z)=(x\circ y)\circ z-x\circ( y\circ z)$$
is super-symmetric in $x$ and $y$, that is, $as(x,y,z)=(-1)^{|x||y|}as(y,x,z)$, or equivalently
 \begin{equation}\label{identitypreliesuper}
(x\circ y)\circ z-x\circ( y\circ z)=(-1)^{|x||y|}\Big((y\circ x)\circ z-y\circ( x\circ z)\Big)
\end{equation}
The identity (\ref{identitypreliesuper}) is called pre-Lie super-identity.
\end{df}
\begin{df}A BiHom-pre-Lie superalgebra is a tuple
$(\mathcal{A},\circ,\alpha,\beta)$ consisting of a $\mathbb{Z}_2$ graded vector space $\mathcal{A}$, an even binary
operation $\circ: \mathcal{A}\otimes \mathcal{A} \to \mathcal{A}$ and two even linear maps $\alpha,\beta: A \to A$ such that for any $x,y,z \in \mathcal{H}(\mathcal{A})$
\begin{align}
 \alpha \beta= &\beta  \alpha,\\
 \alpha(x \circ y)=\alpha(x)\circ \alpha(y)\;\;and\;\;  & \beta(x \circ y)=\beta(x)\circ \beta(y),\\
\label{id-Bihom-super-pre}(\beta(x)\circ \alpha(y))\circ \beta(z)-\alpha\beta(x)\circ (\alpha(y)\circ z)=(-1)^{|x||y|}&\Big(\big(\beta(y)\circ \alpha(x)\big)\circ \beta(z)-\alpha\beta(y)\circ \big(\alpha(x)\circ z\big)\Big).
\end{align}
The identity (\ref{id-Bihom-super-pre}) is called BiHom-pre-Lie super-identity.
\end{df}
\begin{prop}
Let $(\mathcal{A},\circ)$ be a pre-Lie superalgebra and $\alpha,\;\beta:\mathcal{A}\longrightarrow\mathcal{A}$ two commuting even morphisms. Then
$(\mathcal{A},\circ_{\alpha,\beta},\alpha,\beta)$ be a BiHom-pre-Lie superalgebra where

$$x\circ_{\alpha,\beta}y=\alpha(x)\circ\beta(y).$$

\end{prop}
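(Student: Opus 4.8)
The plan is to verify directly the three defining conditions of a BiHom-pre-Lie superalgebra for the twisted product $x \circ_{\alpha,\beta} y = \alpha(x) \circ \beta(y)$. The first condition, $\alpha\beta = \beta\alpha$, is part of the hypothesis. For multiplicativity I would compute $\alpha(x \circ_{\alpha,\beta} y) = \alpha(\alpha(x) \circ \beta(y))$ and use that $\alpha$ is a morphism of $(\mathcal{A},\circ)$ together with $\alpha\beta = \beta\alpha$ to rewrite it as $\alpha^2(x) \circ \beta\alpha(y) = \alpha(x) \circ_{\alpha,\beta} \alpha(y)$; the computation for $\beta$ is entirely symmetric. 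These two steps are routine and use nothing beyond the morphism property and commutativity.

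The heart of the proof is the BiHom-pre-Lie super-identity \eqref{id-Bihom-super-pre}. My strategy is to expand each of the four twisted products appearing on the two sides back into the original product $\circ$, pushing all occurrences of $\alpha$ and $\beta$ inward using the morphism property and the commutativity $\alpha\beta = \beta\alpha$, and then to show that both sides collapse to associators of the original pre-Lie structure. Concretely, I expect the left-hand side to reduce to
$$(\beta(x) \circ_{\alpha,\beta} \alpha(y)) \circ_{\alpha,\beta} \beta(z) - \alpha\beta(x) \circ_{\alpha,\beta} (\alpha(y) \circ_{\alpha,\beta} z) = as\big(\alpha^2\beta(x), \alpha^2\beta(y), \beta^2(z)\big),$$
and, running the identical computation with the roles of $x$ and $y$ interchanged, the bracketed expression on the right-hand side to reduce to $as\big(\alpha^2\beta(y), \alpha^2\beta(x), \beta^2(z)\big)$.

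Once both sides are written in this form, the desired identity becomes exactly the pre-Lie super-identity \eqref{identitypreliesuper} applied to the elements $X = \alpha^2\beta(x)$, $Y = \alpha^2\beta(y)$, $Z = \beta^2(z)$, namely $as(X,Y,Z) = (-1)^{|X||Y|} as(Y,X,Z)$. Since $\alpha$ and $\beta$ are even, $|X| = |x|$ and $|Y| = |y|$, so the sign $(-1)^{|X||Y|}$ agrees with the sign $(-1)^{|x||y|}$ appearing in \eqref{id-Bihom-super-pre}, and the two sides match. The only genuine obstacle is the bookkeeping: one must track precisely which composite twist ($\alpha^2\beta$ on the first two slots, $\beta^2$ on the third) is produced after the two nested applications of the definition of $\circ_{\alpha,\beta}$, and confirm that the same composites arise on both sides so that the transported associators are literally those governed by \eqref{identitypreliesuper}. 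The parity check is immediate from the evenness of $\alpha$ and $\beta$, so no subtle sign issues should arise.
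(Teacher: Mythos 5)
Your proposal is correct and follows essentially the same route as the paper: both reduce the twisted associator $(\beta(x)\circ_{\alpha,\beta}\alpha(y))\circ_{\alpha,\beta}\beta(z)-\alpha\beta(x)\circ_{\alpha,\beta}(\alpha(y)\circ_{\alpha,\beta}z)$ to $as\big(\alpha^2\beta(x),\alpha^2\beta(y),\beta^2(z)\big)$ by pushing $\alpha,\beta$ inward, and then invoke the pre-Lie super-identity \eqref{identitypreliesuper} for $X=\alpha^2\beta(x)$, $Y=\alpha^2\beta(y)$, $Z=\beta^2(z)$, using evenness of the twisting maps to match the sign $(-1)^{|x||y|}$. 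No gaps; the bookkeeping you flag (which composite twist lands in each slot) is exactly what the paper's displayed computation carries out.
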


\begin{proof}
Let $x,y,z\in\mathcal{H}(A)$, we have
\begin{eqnarray*}
as_{\alpha,\beta}(x,y,z)&=&\big(\beta(x)\circ_{\alpha,\beta}\alpha(y)\big)\circ_{\alpha,\beta}\beta(z)-
\alpha\beta(x)\circ_{\alpha,\beta}\big(\alpha(y)\circ_{\alpha,\beta}z\big)\\
&=&\alpha\big(\beta(x)\circ_{\alpha,\beta}\alpha(y)\big)\circ\beta^2(z)-\alpha^2\beta(x)\circ\beta\big(\alpha(y)\circ_{\alpha,\beta}z\big)\\
&=&\big(\alpha^2\beta(x)\circ\alpha^2\beta(y)\big)\circ\beta^2(z)-\alpha^2\beta(x)\circ\big(\alpha^2\beta(y)\circ\beta^2(z)\big).
\end{eqnarray*}
suppose that $X=\alpha^2\beta(x),\;Y=\alpha^2\beta(y)$ and $Z=\beta^2(z)$, then we get
\begin{eqnarray*}
as_{\alpha,\beta}(x,y,z)&=&(X\circ Y)\circ Z- X\circ(Y\circ Z)\\
&=&(-1)^{|X||Y|}\Big((Y\circ X)\circ Z- Y\circ(X\circ Z)\Big)\\
&=&(-1)^{|x||y|}\Big(\big(\alpha^2\beta(y)\circ \alpha^2\beta(x)\big)\circ\beta^2(z)-\alpha^2\beta(y)\circ\big(\alpha^2\beta(x)\circ \beta^2(z)\big)\Big)\\
&=&(-1)^{|x||y|}\Big(\big(\beta(y)\circ_{\alpha,\beta} \alpha(x)\big)\circ_{\alpha,\beta}\beta(z)
-\alpha\beta(y)\circ_{\alpha,\beta}\big(\alpha(x)\circ_{\alpha,\beta} z\big)\Big).
\end{eqnarray*}
We conclude that $\circ_{\alpha,\beta}$ define a BiHom-pre-Lie superalgebra structure on $\mathcal{A}$.
\end{proof}
\begin{df}\label{defi-oper-pre}\
 Let $V$ be a $\mathbb{Z}_2$-graded vector space, $l,r:\mathcal{A}\longrightarrow End(V)$ be two even linear maps and
 $\alpha_V,\beta_V:V\longrightarrow V$ be two even linear maps. The tuple $(V,l,r,\alpha_V,\beta_V)$ is said to be an $\mathcal{A}$-bimodule of $(\mathcal{A},\circ,\alpha,\beta)$
  if,  for   $x,y \in \mathcal{H}(\mathcal{A})$ and $u\in \mathcal{H}(V),$
 \begin{align}
&\alpha_Vl(x)(u)=l(\alpha(x)\alpha_V(u),~~  \beta_Vl(x)(u)=l(\beta(x))\beta_V(u),\label{rep1}\\ &\alpha_Vr(x)(u)=r(\alpha(x))\alpha_V(u),~~   r(\beta(x))\beta_V(u)=\beta_Vr(y)(u),\nonumber\\
&\Big(l(\beta(x)\circ\alpha(y))\beta_V-l(\alpha\beta(x))l(\alpha(y))\Big)(u)=
(-1)^{|x||y|}\Big(l(\beta(y)\circ\alpha(x))\beta_V-l(\alpha\beta(y))l(\alpha(x))\Big)(u),\label{rep2}\\
&\Big(r(\beta(x))r(\alpha(y))\beta_V-r(\alpha(y)\circ x)\alpha_V\beta_V\Big)(u)=
(-1)^{|y||u|}\Big(r(\beta(x))l(\beta(y)\alpha_V-l(\alpha\beta(y))r(x)\alpha_V\Big)(u),\label{rep3}
\end{align}
 \end{df}
 Moreover, the tuple $(V,\circ_V,l,r,\alpha_V,\beta_V)$ is said to be an  $\mathcal{A}$-bimodule $\K$-superalgebra if $(V,l,r,\alpha_V,\beta_V)$  is an  $\mathcal{A}$-bimodule compatible with the multiplication $\circ_V$ on $V$, that is, for $x \in \mathcal{H}(\mathcal{A})$ and $u,v\in \mathcal{H}(V),$
\begin{eqnarray*} && l(\beta(x))(\alpha_V(u))\circ_V(\beta_V( v))-l(\alpha\beta(x))(\alpha_V(u)\circ_Vv)\\
&&= (-1)^{|x||u|} \Big(r(\alpha(x))(\beta_V(u))\circ_V\beta_V(v)-\alpha_V\beta_V(u)\circ_Vl(\alpha(x))(v)\Big),\\
&& ~~ r(\beta(x))(\beta_V(u)\circ_V\alpha_V(v))-\alpha_V\beta_V(u)\circ_V(r(z)(\alpha_V(v)))\\
&&=
(-1)^{|u||v|}\Big(r(\beta(x))(\beta_V(v)\circ_V\alpha_V(u))-\alpha_V\beta_V(v)\circ_V(r(x)(\alpha_V(u)))\Big).
\end{eqnarray*}
 Let  $(V,\circ_V,l,r,\alpha_V,\beta_V)$ be an $\mathcal{A}$-bimodule $\K$-superalgebra. An even linear map
$T:V\longrightarrow \mathcal{A}$ is called a super $\mathcal{O}$-operator of weight $\lambda \in \mathbb{K}$ associated to $(V,\circ_V,l,r,\alpha_V,\beta_V)$ if it  satisfies:
\begin{eqnarray}
& \alpha\circ T=T\circ\alpha_V\;\;and\;\;\beta\circ T=T\circ\beta_V,\\
\label{opera2} & T(u)\circ T(v)= T\Big(l(T(u))v+(-1)^{|u||v|}r(T(\alpha_V^{-1}\beta_V(v))\alpha_V\beta_V^{-1}(u)) +\lambda u\circ_V v\Big),~~\forall~~~u,v\in \mathcal{H}(V).
\end{eqnarray}
 In particular, a super $\mathcal{O}$-operator of weight $\lambda \in \mathbb{K}$ associated to the  $\mathcal{A}$-bimodule $(\mathcal{A},L_\circ,R_\circ,\alpha,\beta)$ is called a Rota-Baxter operator of weight $\lambda$ on $(\mathcal{A},\circ,\alpha,\beta)$, that is, $R$ satisfies
\begin{equation}\label{Rota-baxter-Lie}
    R(x)\circ R(y)=R\Big(R(x)\circ y+ x\circ R(y)+\lambda x\circ y\Big)
\end{equation}
for all $x,y$ in $\mathcal{H}(\mathcal{A})$.
\begin{prop}
Let $(\mathcal{A},\circ,\alpha,\beta)$ be a BiHom-pre-Lie superalgebra such that $\alpha$ and $\beta$ are bijective.
\begin{enumerate}
\item Define $[~,~]_C:\mathcal{A}\otimes\mathcal{A}\rightarrow\mathcal{A}$ by $[x,y]_C=x\circ y-(-1)^{|x||y|}(\alpha^{-1}\beta(y))\circ(\alpha\beta^{-1}(x))$.
Then  $(\mathcal{A},[~~,~~]_C,\alpha,\beta)$ defines a BiHom-Lie superalgebra  which is denoted by $\mathcal{A}^C$ and called the sub-adjacent BiHom-Lie superalgebra of $\mathcal{A}$ and $\mathcal{A}$ is also called a compatible BiHom-pre-Lie superalgebra structure on the BiHom-Lie superalgebra.
\item The map  $L_\circ$ gives a representation of the BiHom-Lie superalgebra $(\mathcal{A},[~~,~~],\alpha,\beta)$, that is,
\begin{eqnarray*}
L_\circ(\alpha(x))\circ\alpha&=&\alpha\circ L_\circ(x),\\
L_\circ(\alpha(x))\circ\beta&=&\beta\circ L_\circ(x),\\
L_\circ([\beta(x),y])\circ\beta&=&L_\circ(\alpha\beta(x)) L_\circ(y)-(-1)^{|x||y|}L_\circ(\beta(y)) L_\circ(\alpha(x)).
\end{eqnarray*}
\end{enumerate}
\end{prop}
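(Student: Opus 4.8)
The strategy is to trace both statements back to the single defining relation \eqref{id-Bihom-super-pre}, using repeatedly that $\alpha,\beta$ are bijective, commute, and are multiplicative for $\circ$. The one preliminary observation that makes the bookkeeping manageable is the following normal form: for homogeneous $a,b$,
$$[\beta(a),\alpha(b)]_C=\beta(a)\circ\alpha(b)-(-1)^{|a||b|}\beta(b)\circ\alpha(a),$$
which holds because $\alpha^{-1}\beta\alpha(b)=\beta(b)$ and $\alpha\beta^{-1}\beta(a)=\alpha(a)$. From multiplicativity of $\alpha,\beta$ for $\circ$ together with $\alpha\beta=\beta\alpha$ one also checks at once that $\alpha$ and $\beta$ are multiplicative for $[~,~]_C$, so these structural axioms of a BiHom-Lie superalgebra come for free.

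For part (1), the BiHom super skew-symmetry \eqref{B-H-skwesym} is immediate: interchanging $a$ and $b$ in the normal form and multiplying by $-(-1)^{|a||b|}$ returns the same expression. The substantial point is the BiHom super-Jacobi identity \eqref{B-H-sJ}. Here I would expand the cyclic sum $\circlearrowleft_{x,y,z}(-1)^{|x||z|}[\beta^2(x),[\beta(y),\alpha(z)]_C]_C$ by first putting each inner bracket in normal form and then resolving each outer bracket through the definition of $[~,~]_C$; this turns the sum into a combination of left-nested products $(\,\cdot\circ\cdot\,)\circ\cdot$ and right-nested products $\cdot\circ(\,\cdot\circ\cdot\,)$ with $\alpha,\beta$-twisted entries. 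Collecting these into the matched pairs that appear on the two sides of \eqref{id-Bihom-super-pre} and applying that identity term by term makes the total vanish; this is the BiHom-graded version of the classical fact that a left-symmetric algebra is Lie-admissible. I expect this expansion to be the main obstacle, the difficulty being entirely in the sign and twist bookkeeping needed to pair each triple product with its partner under the pre-Lie super-identity.

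For part (2), the first two representation identities \eqref{bihom-lie-rep-1} and \eqref{bihom-lie-rep-2} for $\rho=L_\circ$ are precisely the multiplicativity of $\alpha$ and $\beta$ for $\circ$: evaluating on $y$ gives $\alpha(x)\circ\alpha(y)=\alpha(x\circ y)$ and $\beta(x)\circ\beta(y)=\beta(x\circ y)$. For the third identity \eqref{bihom-lie-rep-3} I would evaluate both sides on a homogeneous $z$. The left-hand side is $[\beta(x),y]_C\circ\beta(z)=(\beta(x)\circ y)\circ\beta(z)-(-1)^{|x||y|}\big((\alpha^{-1}\beta(y))\circ\alpha(x)\big)\circ\beta(z)$ and the right-hand side is $\alpha\beta(x)\circ(y\circ z)-(-1)^{|x||y|}\beta(y)\circ(\alpha(x)\circ z)$. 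Writing $y=\alpha(y')$ and using $\alpha^{-1}\beta\alpha=\beta$ and $\beta\alpha=\alpha\beta$, the desired equality becomes exactly \eqref{id-Bihom-super-pre} evaluated at $(x,y',z)$. Thus the representation property is a direct rewriting of the BiHom-pre-Lie super-identity and requires no work beyond this substitution; in particular it gives an independent confirmation of the Lie-admissibility underlying part (1).
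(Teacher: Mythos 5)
First, a point of reference: the paper states this proposition without any proof, so there is no printed argument to compare you against; the assessment below is of your proposal on its own terms. Much of it is correct: the normal form $[\beta(a),\alpha(b)]_C=\beta(a)\circ\alpha(b)-(-1)^{|a||b|}\beta(b)\circ\alpha(a)$, the skew-symmetry \eqref{B-H-skwesym} it yields, the multiplicativity of $\alpha,\beta$ for $[~,~]_C$, and all of part (2) --- your reading of the second displayed identity as \eqref{bihom-lie-rep-2} (i.e.\ with $L_\circ(\beta(x))$, which corrects an evident misprint in the statement) is the right one, and the substitution $y=\alpha(y')$ does convert \eqref{bihom-lie-rep-3} for $\rho=L_\circ$ exactly into \eqref{id-Bihom-super-pre}. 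The genuine gap is in part (1): for the BiHom super-Jacobi identity \eqref{B-H-sJ} you describe a plan (expand, ``collect into matched pairs'', apply \eqref{id-Bihom-super-pre} term by term) and then explicitly defer its execution (``I expect this expansion to be the main obstacle''). But that pairing \emph{is} the theorem: which twisted triple products cancel against which, and at which twisted arguments the pre-Lie identity must be invoked, is precisely the content of BiHom Lie-admissibility, and nothing in your text certifies that the signs and twists come out right. As written, part (1) is an outline, not a proof.

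The outline is viable, and here is concretely what has to be exhibited to close it. For homogeneous $a,b,c$ set $\tilde a=\alpha^{-1}\beta(a)$ (this is where bijectivity is used) and
$$A(a,b,c)=(\beta(a)\circ\alpha(b))\circ\beta(c)-\alpha\beta(a)\circ(\alpha(b)\circ c),$$
so that \eqref{id-Bihom-super-pre} reads $A(a,b,c)=(-1)^{|a||b|}A(b,a,c)$. Expanding the inner bracket by your normal form and the outer one by the definition of $[~,~]_C$ (using that $\alpha^{-1}\beta$ is multiplicative for $\circ$) gives
\begin{align*}
[\beta^2(x),[\beta(y),\alpha(z)]_C]_C
&=T_1(x,y,z)-(-1)^{|y||z|}T_1(x,z,y)\\
&\quad-(-1)^{|x|(|y|+|z|)}T_2(y,z,x)+(-1)^{|x|(|y|+|z|)+|y||z|}T_2(z,y,x),
\end{align*}
where
\begin{align*}
T_1(a,b,c)&=\beta^2(a)\circ(\beta(b)\circ\alpha(c))=\alpha\beta(\tilde a)\circ(\alpha(\tilde b)\circ\alpha(c)),\\
T_2(a,b,c)&=(\alpha^{-1}\beta^2(a)\circ\beta(b))\circ\alpha\beta(c)=(\beta(\tilde a)\circ\alpha(\tilde b))\circ\beta(\alpha(c)),
\end{align*}
hence $A(\tilde a,\tilde b,\alpha(c))=T_2(a,b,c)-T_1(a,b,c)$. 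Writing out the twelve terms of $\circlearrowleft_{x,y,z}(-1)^{|x||z|}[\beta^2(x),[\beta(y),\alpha(z)]_C]_C$, they regroup --- across different cyclic summands, not within a single outer bracket --- into
$$-(-1)^{|x||z|}\Big(A(\tilde x,\tilde y,\alpha(z))-(-1)^{|x||y|}A(\tilde y,\tilde x,\alpha(z))\Big)$$
plus its two cyclic analogues, and each of these three expressions vanishes by \eqref{id-Bihom-super-pre} applied at the twisted arguments (legitimate because $|\tilde a|=|a|$, the maps being even). Supplying this grouping, or an equivalent bookkeeping, is what your proposal still owes; with it, part (1) is proved.
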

\begin{cor}Let $(\mathcal{A},\circ,\alpha,\beta)$ be a BiHom-pre-Lie superalgebra and $(V,l,r,\alpha_V,\beta_V)$ be an  $\mathcal{A}$-bimodule. Let $(\mathcal{A},[~~,~~]_C,\alpha,\beta)$ be the
subadjacent BiHom-Lie superalgebra. If $T$ is a super $\mathcal{O}$-operator associated to $(V,l,r,\alpha_V,\beta_V)$, then $T$ is a super $\mathcal{O}$-operator of $(\mathcal{A},[~~,~~]_C,\alpha,\beta)$ associated to $(V,l-r,r-l,\alpha_V,\beta_V)$.
\end{cor}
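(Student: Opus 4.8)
The plan is to unpack the two assertions hidden in the statement. First, that $(V,l-r,r-l,\alpha_V,\beta_V)$ genuinely is a representation of the subadjacent BiHom-Lie superalgebra $\mathcal{A}^C$; and second — the substantive part — that the pre-Lie $\mathcal{O}$-operator identity \eqref{opera2} for $T$ (at weight zero, since the hypothesis attaches $T$ to a bimodule rather than to a bimodule $\K$-superalgebra) forces the BiHom-Lie $\mathcal{O}$-operator identity \eqref{O-operator} for $T$ with representation $\rho:=l-r$. The first ingredient is the module-level analogue of part (2) of the preceding Proposition: one substitutes $\rho=l-r$ into \eqref{bihom-lie-rep-1}–\eqref{bihom-lie-rep-3} and checks them using the equivariance $\alpha T=T\alpha_V$, $\beta T=T\beta_V$ and the bimodule axioms \eqref{rep1}–\eqref{rep3}. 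I would dispatch this exactly as in that Proposition and not belabour it; the right action $r-l=-\rho$ only records the bimodule presentation of $\rho$ and never enters \eqref{O-operator}.

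For the main computation, fix homogeneous $u,v\in\mathcal{H}(V)$ and recall that $T$ is even, so $|T(u)|=|u|$ and $|T(v)|=|v|$. First I would expand the subadjacent bracket and use equivariance (and the bijectivity of $\alpha,\beta$) to pull the twists inside $T$:
\begin{equation*}
[T(u),T(v)]_C=T(u)\circ T(v)-(-1)^{|u||v|}\,T(\alpha_V^{-1}\beta_V(v))\circ T(\alpha_V\beta_V^{-1}(u)).
\end{equation*}
Next I would apply \eqref{opera2} to each product. For the first this is immediate. For the second I set $p=\alpha_V^{-1}\beta_V(v)$ and $q=\alpha_V\beta_V^{-1}(u)$, note $|p|=|v|$, $|q|=|u|$, and observe that the internal twists collapse, $\alpha_V^{-1}\beta_V(q)=u$ and $\alpha_V\beta_V^{-1}(p)=v$, so that \eqref{opera2} returns precisely the building blocks $l(T(p))q$ and $r(T(u))v$.

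Substituting both expansions, invoking the linearity of $T$ and the cancellation $(-1)^{2|u||v|}=1$, the four resulting terms regroup as
\begin{equation*}
[T(u),T(v)]_C=T\Big((l-r)(T(u))v-(-1)^{|u||v|}(l-r)\big(T(\alpha_V^{-1}\beta_V(v))\big)\alpha_V\beta_V^{-1}(u)\Big),
\end{equation*}
which is exactly \eqref{O-operator} at weight zero with $\rho=l-r$, completing the argument. I expect the only real obstacle to be bookkeeping rather than anything conceptual: one must keep the twisting maps and their inverses aligned when applying \eqref{opera2} to the reversed, $\alpha_V^{-1}\beta_V$/$\alpha_V\beta_V^{-1}$-twisted product, and verify that the Koszul sign $(-1)^{|u||v|}$ it produces, combined with the external sign from the bracket, yields $+1$ on the $r(T(u))v$ term and $-1$ on the $l(T(p))q$ term — which is exactly what makes the left and right actions coalesce into the single combination $l-r$.
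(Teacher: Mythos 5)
Your argument is correct, and in fact the paper states this corollary with no proof at all, so your computation supplies exactly the argument the paper leaves implicit; there is no alternative "paper route" to compare against. The core of your proof is right: expand $[T(u),T(v)]_C$, use the equivariance $\alpha T=T\alpha_V$, $\beta T=T\beta_V$ (with $\alpha,\beta,\alpha_V,\beta_V$ invertible) to rewrite the twisted arguments as $T(\alpha_V^{-1}\beta_V(v))$ and $T(\alpha_V\beta_V^{-1}(u))$, apply \eqref{opera2} at weight zero to both products, observe that for $p=\alpha_V^{-1}\beta_V(v)$, $q=\alpha_V\beta_V^{-1}(u)$ the internal twists collapse ($\alpha_V^{-1}\beta_V(q)=u$, $\alpha_V\beta_V^{-1}(p)=v$, using that $\alpha_V$ and $\beta_V$ commute), and regroup using $(-1)^{2|u||v|}=1$. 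This does give
$[T(u),T(v)]_C=T\bigl((l-r)(T(u))v-(-1)^{|u||v|}(l-r)(T(\alpha_V^{-1}\beta_V(v)))\alpha_V\beta_V^{-1}(u)\bigr)$,
which is \eqref{O-operator} with $\rho=l-r$ and $\lambda=0$, and you correctly note that the "right action" $r-l=-\rho$ plays no role in that identity.

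Two caveats, neither fatal. First, the sign bookkeeping in your closing sentence is misstated: the external sign $-(-1)^{|u||v|}$ from the bracket times the Koszul sign $(-1)^{|u||v|}$ produced by \eqref{opera2} gives coefficient $-1$ (not $+1$) on $r(T(u))v$, and the coefficient on $l(T(p))q$ is $-(-1)^{|u||v|}$ (not $-1$); these are exactly the coefficients that assemble into $(l-r)(T(u))v-(-1)^{|u||v|}(l-r)(T(p))q$, so your displayed formula is right and the slip is only in the prose. Second, the preliminary claim that $l-r$ satisfies \eqref{bihom-lie-rep-1}--\eqref{bihom-lie-rep-3} is not literally the module-level analogue of part (2) of the preceding Proposition: that part asserts that $L_\circ$ alone (the analogue of $l$) is a representation of the subadjacent BiHom-Lie superalgebra, whereas the statement you need concerns the combination $l-r$; its verification is the BiHom-super analogue of the classical fact that $l-r$ represents the sub-adjacent Lie algebra of a pre-Lie algebra, and it requires combining \eqref{rep2} with \eqref{rep3} (not \eqref{rep2} alone). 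The check is routine but should be written out rather than delegated to the Proposition, especially since it is what makes the phrase "super $\mathcal{O}$-operator associated to $(V,l-r,r-l,\alpha_V,\beta_V)$" well posed.
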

\begin{thm}Let $\mathcal{A}_{1}=(\mathcal{A},\circ,R,\alpha,\beta)$ be  a Rota-Baxter BiHom-pre-Lie superalgebra of weight zero. Then $\mathcal{A}_{2}=(\mathcal{A},\ast,R,\alpha,\beta)$ is a Rota-Baxter BiHom-pre-Lie superalgebra of weight zero, where the even binary operation $"\ast"$ is defined by
\begin{eqnarray*}
    x \ast y&=& R(x)\circ y-(-1)^{|x||y|}\alpha^{-1}\beta(y) \circ R(\alpha\beta^{-1}(x)).
\end{eqnarray*}
\end{thm}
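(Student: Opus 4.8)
The plan is to check two things: that $(\mathcal{A},\ast,\alpha,\beta)$ is a BiHom-pre-Lie superalgebra, and that $R$ remains a Rota-Baxter operator of weight zero for $\ast$. Throughout I will use that, as a super $\mathcal{O}$-operator associated to $(\mathcal{A},L_\circ,R_\circ,\alpha,\beta)$, the map $R$ commutes with the structure maps, $\alpha R=R\alpha$ and $\beta R=R\beta$, that $\alpha,\beta$ are invertible and multiplicative for $\circ$, and that $R$ obeys the weight-zero identity $R(x)\circ R(y)=R\big(R(x)\circ y+x\circ R(y)\big)$. First I would dispose of the easy axioms. The relation $\alpha\beta=\beta\alpha$ is inherited. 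For multiplicativity I apply $\alpha$ (resp.\ $\beta$) termwise to the defining formula of $\ast$, push it through $\circ$ by multiplicativity and past $R$ by the commutation relations, and simplify the twists $\alpha^{-1}\beta,\ \alpha\beta^{-1}$ using $\alpha\beta=\beta\alpha$ and $\alpha^{-1}\alpha=\mathrm{id}$; this yields $\alpha(x\ast y)=\alpha(x)\ast\alpha(y)$ and $\beta(x\ast y)=\beta(x)\ast\beta(y)$ with no difficulty.

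The substantial part is the BiHom-pre-Lie super-identity \eqref{id-Bihom-super-pre} for $\ast$. I observe that the formula defining $\ast$ is precisely the associative-to-pre-Lie formula \eqref{intr-ass=pre-Lie0} with the associative product replaced by the pre-Lie product $\circ$, so the computation can be organized like the proof of Theorem~\ref{RB+ass=BiHompreLie super}, one level up. Concretely, I would form the twisted associator $as_\ast(x,y,z)=(\beta(x)\ast\alpha(y))\ast\beta(z)-\alpha\beta(x)\ast(\alpha(y)\ast z)$, substitute the definition of $\ast$ at each occurrence, and expand by bilinearity. This produces a sum of $\circ$-products in which $R$ occurs both on outer factors and on inner subproducts; applying the weight-zero Rota-Baxter identity moves $R$ across these inner $\circ$-products, and the resulting expression is then reorganized by the BiHom-pre-Lie identity \eqref{id-Bihom-super-pre} for $\circ$. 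Matching the outcome with $(-1)^{|x||y|}as_\ast(y,x,z)$ yields \eqref{id-Bihom-super-pre} for $\ast$. I expect this to be the main obstacle: the associator has many terms, and the Koszul signs together with the $\alpha^{\pm1}\beta^{\mp1}$ twists on the inner arguments make the bookkeeping delicate.

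For the Rota-Baxter property of $R$ with respect to $\ast$ I would verify directly
$$R(x)\ast R(y)=R\big(R(x)\ast y+x\ast R(y)\big).$$
Expanding both sides through the definition of $\ast$ and the commutation $\alpha R=R\alpha$, $\beta R=R\beta$ produces terms of the form $R\big(R^2(x)\circ y\big)$, $R\big(y\circ R^2(x)\big)$ and $R\big(R(x)\circ R(y)\big)$ together with their twisted analogues. Each is rewritten by the weight-zero identity $R(a)\circ R(b)=R\big(R(a)\circ b+a\circ R(b)\big)$, choosing $a,b$ from $\{x,y,R(x),R(y)\}$. In the non-BiHom, non-super shadow, where $x\ast y=R(x)\circ y-y\circ R(x)$, these substitutions make all $R\big(R(x)\circ R(y)\big)$-type contributions cancel in pairs and leave $R^2(x)\circ R(y)-R(y)\circ R^2(x)$ on both sides; the BiHom-super case follows the same cancellation once the signs and twists are carried along. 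Unlike the pre-Lie identity, this step uses only the Rota-Baxter relation, so it is mechanical though lengthy.

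In summary, the two new verifications are the BiHom-pre-Lie super-identity for $\ast$ and the Rota-Baxter identity for $R$. The former is the genuine difficulty, and the most economical presentation imports its bookkeeping pattern from Theorem~\ref{RB+ass=BiHompreLie super}; the latter is a direct, self-contained consequence of the weight-zero Rota-Baxter identity for $\circ$.
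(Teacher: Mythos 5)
Your treatment of the BiHom-pre-Lie super-identity for $\ast$ follows exactly the route of the paper's own proof: expand $as_{\ast}(x,y,z)-(-1)^{|x||y|}as_{\ast}(y,x,z)$ through the definition of $\ast$, use the weight-zero Rota-Baxter identity to recombine pairs such as
$R\big(R(\beta(x))\circ\alpha(y)\big)+R\big(\beta(x)\circ R(\alpha(y))\big)$ into $R(\beta(x))\circ R(\alpha(y))$, and then group the outcome into twisted $\circ$-associators, namely $as_{\mathcal{A}_1}\big(R(x),R(y),z\big)$, $as_{\mathcal{A}_1}\big(\alpha^{-2}\beta(z),R(x),R(\alpha^{2}\beta^{-1}(y))\big)$ and their permuted partners, each of which is killed by the identity \eqref{id-Bihom-super-pre} for $\circ$. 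So on the hard part you and the paper agree, down to the organization of the cancellation.

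Where you genuinely differ is that you also verify that $R$ remains a Rota-Baxter operator of weight zero for $\ast$, i.e. $R(x)\ast R(y)=R\big(R(x)\ast y+x\ast R(y)\big)$. The paper's proof stops once the pre-Lie structure is established (``Then $\mathcal{A}_{2}$ is a BiHom-pre-Lie superalgebra, which ends the proof''), even though the theorem asserts that $(\mathcal{A},\ast,R,\alpha,\beta)$ is a \emph{Rota-Baxter} BiHom-pre-Lie superalgebra; so this step is simply missing there. Your sketch of it is correct: expanding $R\big(R(x)\ast y+x\ast R(y)\big)$, using $R\alpha=\alpha R$ and $R\beta=\beta R$ to write $\alpha^{-1}\beta(R(y))=R(\alpha^{-1}\beta(y))$ and $\alpha\beta^{-1}(R(x))=R(\alpha\beta^{-1}(x))$, and applying the weight-zero identity once to the untwisted group, $R\big(R^{2}(x)\circ y+R(x)\circ R(y)\big)=R^{2}(x)\circ R(y)$, and once to the twisted group, $R\big(\alpha^{-1}\beta(y)\circ R(R(\alpha\beta^{-1}(x)))+R(\alpha^{-1}\beta(y))\circ R(\alpha\beta^{-1}(x))\big)=R(\alpha^{-1}\beta(y))\circ R(R(\alpha\beta^{-1}(x)))$, reproduces exactly $R(x)\ast R(y)$. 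In short: your plan coincides with the paper on the pre-Lie identity, and is strictly more complete than the paper with respect to what the statement actually claims.
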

\begin{proof} Since $\mathcal{A}_{1}=(\mathcal{A},\circ,\alpha,\beta)$ is a BiHom-pre-Lie superalgebra then we easily deduce that: \\

$\alpha(x\ast y)=\alpha(x)\ast \alpha(y)$ and $\beta(x\ast y)=\beta(x)\ast \beta(y)$.\\

Let $x,y$ and $z$ be a homogeneous elements in $\mathcal{A}$. Then we have
\begin{eqnarray*}
\alpha\beta(x)\ast (\alpha(y)\ast z)&=& R(\alpha\beta(x))\circ(\alpha(y)\ast z)-(-1)^{|x|(|y|+|z|)}(\beta(y)\ast\alpha^{-1}\beta(z))\circ R(\alpha^2(x))\\
&=& R(\alpha\beta(x))\circ(R(\alpha(y))\circ z)-(-1)^{|y||z|}R(\alpha\beta(x)\circ (\alpha^{-1}\beta(z)\circ R(\alpha^2\beta^{-1}(y)))\\
&-&(-1)^{|x|(|y|+|z|)}(R(\beta(y))\circ\alpha^{-1}\beta(z))\circ R(\alpha^2(x))\\
&+&(-1)^{|x|(|y|+|z|)+|y||z|}(\alpha^{-2}\beta^2(z)\circ R(\alpha(y)))\circ R(\alpha^2(x)),
\end{eqnarray*}
and
\begin{eqnarray*}
(\beta(x) \ast \alpha(y))\ast \beta(z)&=& R(\beta(x)\ast\alpha(y))\circ\beta(z)-(-1)^{|z|(|x|+|y|)}\alpha^{-1}\beta^2(z)\circ R(\alpha(x)\ast\alpha^2\beta^{-1}(y))\\
&=& R(R(\beta(x))\circ \alpha(y))\circ \beta(z)-(-1)^{|x||y|}R(\beta(y) \circ R(\alpha(x)))\circ \beta(z)\\
&-&(-1)^{|z|(|x|+|y|)}\alpha^{-1}\beta^2(z) \circ R(R(\alpha(x))\circ \alpha^2\beta^{-1}(y))\\
&+&(-1)^{|z|(|x|+|y|)+|x||y|}\alpha^{-1}\beta^2(z)\circ R(\alpha(y) \circ R(\alpha^2\beta^{-1}(x))).
\end{eqnarray*}%
Subtracting the above terms, switching $x$ and $y$, applied the fact that $R$ is Rota Baxter operator on $(\mathcal{A}_1,\circ,\alpha,\beta)$, and then subtracting the result yield:
\begin{small}
\begin{eqnarray*}
  && \ \    ass_{\mathcal{A}_{2}}(x,y,z)-(-1)^{|x||y|}ass_{\mathcal{A}_{2}}(y,x,z) \\
  && \ \ = \Big(\big(R(\beta(x))\circ R(\alpha(y))\big)\circ \beta(z)-R(\alpha\beta(x))\circ\big(R(\alpha(y))\circ z\big)\\
  && \ \  -(-1)^{|x||y|}\Big(R\big(\beta(y) \circ R(\alpha(x))\big)\circ\beta(z)+R(\alpha\beta(y)) \circ \big(\alpha(x)\circ z\big)\Big)\\
 && \ \  + (-1)^{|z|(|x|+|y|)}\Big[ \Big(-\alpha^{-1}\beta^2(z) \circ \big(R(\alpha(x))\circ R(\alpha^2\beta^{-1}(y))\big)+\big(\alpha^{-2}\beta^2(z) \circ R(\alpha(x))\big)\circ R(\alpha^2(y))\Big) \\
  && \ \  + (-1)^{|x||z|}\Big(R(\alpha\beta(x))\circ\big(\alpha^{-1}\beta(z) \circ R(\alpha^2\beta^{-1}(y))\big)-\big(R(\beta(x)) \circ R(\alpha^{-1}\beta(z))\big)\circ R(\alpha^2(y))\Big)\Big] \\
&& \ \  (-1)^{|z|(|x|+|y|)+|x||y|}\Big[ \Big(-\alpha^{-1}\beta^2(z) \circ \big(R(\alpha(y))\circ R(\alpha^2\beta^{-1}(x))\big)+\big(\alpha^{-2}\beta^2(z) \circ R(\alpha(y))\big)\circ R(\alpha^2(x))\Big) \\
  && \ \  + (-1)^{|y||z|}\Big(R(\alpha\beta(y))\circ\big(\alpha^{-1}\beta(z) \circ R(\alpha^2\beta^{-1}(x))\big)-\big(R(\beta(y)) \circ R(\alpha^{-1}\beta(z))\big)\circ R(\alpha^2(x))\Big)\Big] \\
  && \ \ \ = \Big(ass_{\mathcal{A}_{1}}\big(R(x),R(y),z\big)
  -(-1)^{|x||y|}ass_{\mathcal{A}_{1}}\big(R(y),R(x),z\big)\Big)\\
&& \ \  +(-1)^{|z|(|x|+|y|)}\Big(ass_{\mathcal{A}_{1}}\big(\alpha^{-2}\beta(z),R(x),R(\alpha^2\beta^{-1}(y))\big)
-(-1)^{|x||z|}ass_{\mathcal{A}_{1}}\big(R(x),\alpha^{-2}\beta(z),R(\alpha^2\beta^{-1}(y))\big) \Big)\\
  && \ \  -(-1)^{|z|(|x|+|y|)+|x||y|}\Big(ass_{\mathcal{A}_{1}}\big(\alpha^{-2}\beta(z),R(y),R(\alpha^2\beta^{-1}(x))\big)
-(-1)^{|y||z|}ass_{\mathcal{A}_{1}}\big(R(y),\alpha^{-2}\beta(z),R(\alpha^2\beta^{-1}(x))\big) \Big)\\
   && \ \  =0.
  \end{eqnarray*}
  \end{small}
Then $\mathcal{A}_{2}$ is a BiHom-pre-Lie superalgebra, which ends the proof.
\end{proof}
Now, we construct BiHom-pre-Lie superalgebras using super $\mathcal{O}$-operators on BiHom-Lie superalgebras.
\begin{prop}Let $(\mathcal{A},[~~,~~],\alpha,\beta)$ be a BiHom-Lie superalgebra and $(V,\rho,\alpha_V,\beta_V)$ be a representation of $\mathcal{A}$. Suppose that $T:V\longrightarrow \mathcal{A}$ is a super $\mathcal{O}$-operator of weight zero associated to $(V,\rho,\alpha_V,\beta_V)$. Then, the  even bilinear map
$$u\circ v=\rho(T(u))v,~~\forall~~u,v\in \mathcal{H}(V)$$
defines a BiHom-pre-Lie superalgebra structure on $V$.
\end{prop}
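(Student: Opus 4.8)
The plan is to verify directly that $(V,\circ,\alpha_V,\beta_V)$, with $u\circ v=\rho(T(u))v$, satisfies the three defining conditions of a BiHom-pre-Lie superalgebra. The commutativity $\alpha_V\beta_V=\beta_V\alpha_V$ is already part of the representation data, so only multiplicativity and the BiHom-pre-Lie super-identity \eqref{id-Bihom-super-pre} remain to be established.

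For multiplicativity, I would compute $\alpha_V(u\circ v)=\alpha_V(\rho(T(u))v)$ and rewrite it by the representation axiom \eqref{bihom-lie-rep-1} as $\rho(\alpha(T(u)))\alpha_V(v)$; since $T$ intertwines the twists, $\alpha\circ T=T\circ\alpha_V$, this equals $\rho(T(\alpha_V(u)))\alpha_V(v)=\alpha_V(u)\circ\alpha_V(v)$. The identity $\beta_V(u\circ v)=\beta_V(u)\circ\beta_V(v)$ follows in the same way from \eqref{bihom-lie-rep-2} together with $\beta\circ T=T\circ\beta_V$.

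The heart of the proof is the super-identity. Writing $as_V(u,v,w)=(\beta_V(u)\circ\alpha_V(v))\circ\beta_V(w)-\alpha_V\beta_V(u)\circ(\alpha_V(v)\circ w)$, I would show that the antisymmetrised combination $as_V(u,v,w)-(-1)^{|u||v|}as_V(v,u,w)$ vanishes. Expanding $\circ$ and using bilinearity, the two \emph{outer-product} terms collect into $\big(\beta_V(u)\circ\alpha_V(v)-(-1)^{|u||v|}\beta_V(v)\circ\alpha_V(u)\big)\circ\beta_V(w)$. Now I apply the weight-zero $\mathcal{O}$-operator relation \eqref{O-operator} to the pair $(\beta_V(u),\alpha_V(v))$: the bracketed factor is precisely the argument of $T$ occurring there (formally the sub-adjacent bracket $[\beta_V(u),\alpha_V(v)]_C$), and after simplifying the twists via $\alpha_V^{-1}\beta_V\alpha_V=\beta_V$, $\alpha_V\beta_V^{-1}\beta_V=\alpha_V$ and the intertwining relations for $T$, this factor becomes, under $T$, the element $[\beta(T(u)),\alpha(T(v))]$. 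Hence the outer-product terms equal $\rho([\beta(T(u)),\alpha(T(v))])\beta_V(w)$. On the other hand, the two \emph{nested-product} terms expand directly to $\rho(\alpha\beta(T(u)))\rho(\alpha(T(v)))w-(-1)^{|u||v|}\rho(\alpha\beta(T(v)))\rho(\alpha(T(u)))w$, which is exactly the right-hand side of \eqref{bihom-lie-rep-3} evaluated at $x=T(u)$, $y=\alpha(T(v))$ and applied to $w$ (using $\alpha\beta=\beta\alpha$ to turn $\beta(\alpha(T(v)))$ into $\alpha\beta(T(v))$); so this combination also equals $\rho([\beta(T(u)),\alpha(T(v))])\beta_V(w)$. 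The two contributions cancel, yielding $as_V(u,v,w)=(-1)^{|u||v|}as_V(v,u,w)$, which is \eqref{id-Bihom-super-pre}.

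The only real difficulty is bookkeeping: keeping the Koszul signs and the exact powers of $\alpha_V,\beta_V$ aligned so that the factor produced by the $\mathcal{O}$-operator relation matches verbatim the argument $[\beta(T(u)),\alpha(T(v))]$ demanded by \eqref{bihom-lie-rep-3}. No structural input beyond \eqref{bihom-lie-rep-3}, \eqref{O-operator}, and the intertwining of $T$ with $\alpha,\beta$ is needed; once the twists and signs are matched, the cancellation is immediate.
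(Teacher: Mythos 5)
Your proposal is correct and follows essentially the same route as the paper's proof: expand the antisymmetrised associator, convert the outer-product terms via the weight-zero $\mathcal{O}$-operator identity into $\rho\big([\beta(T(u)),\alpha(T(v))]\big)\beta_V(w)$, expand this by the representation axiom \eqref{bihom-lie-rep-3}, and cancel against the nested-product terms using the intertwining of $T$ with the twists. The only difference is presentational (you also spell out the multiplicativity check, which the paper leaves implicit).
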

\begin{proof}Let $u,v$ and $w$ be  in $\mathcal{H}(V)$. We have
\begin{eqnarray*}
(\beta_V(u)\circ \alpha_V(v))\circ \beta_V(w)-\alpha_V\beta_V(u)\circ(\alpha_V(v)\circ w)
  =&& \rho\Big(T\big(\rho(T(\beta_V(u)))\alpha_V(v)\big)\Big)\beta_V(w)\\
-&&\rho\Big(T\big(\alpha_V\beta_V(u)\big)\Big)\rho\Big(T\big(\alpha_V(v)\big)\Big)w,
\end{eqnarray*}
\begin{eqnarray*}
(-1)^{|u||v|}\Big((\beta_V(v)\circ \alpha_V(u))\circ \beta_V(w)-\alpha_V\beta_V(v)\circ(\alpha_V(u)\circ w)\Big)
&=& (-1)^{|u||v|}
\Big[\rho\Big(T\big(\rho(T(\beta_V(v)))\alpha_V(u)\big)\Big)\beta_V(w)\\
&-&\rho\Big(T\big(\alpha_V\beta_V(v)\big)\Big)\rho\Big(T\big(\alpha_V(u)\big)\Big)w\Big]
\end{eqnarray*}
Hence \begin{eqnarray*}
      & & \ \ \ (\beta_V(u)\circ \alpha_V(v))\circ \beta_V(w)-\alpha_V\beta_V(u)\circ(\alpha_V(v)\circ w)
      -(-1)^{|u||v|}\Big((\beta_V(v)\circ \alpha_V(u))\circ \beta_V(w)-\alpha_V\beta_V(v)\circ(\alpha_V(u)\circ w)\Big)\\
      & & \ \ \  =\rho T\Big(\rho(T(\beta_V(u)))\alpha_V(v)-(-1)^{|u||v|}\rho(T(\beta_V(v)))\alpha_V(u)\Big)\beta_V(w)
      -\rho\Big(T\big(\alpha_V\beta_V(u)\big)\Big)\rho\Big(T\big(\alpha_V(v)\big)\Big)w\\
     & &\ \ \  +(-1)^{|u||v|}\rho\Big(T\big(\alpha_V\beta_V(v)\big)\Big)\rho\Big(T\big(\alpha_V(u)\big)\Big)w\\
      & & \ \ \  =\rho(\displaystyle\underbrace{[T(\beta_V(u)),T(\alpha_V(v))]}_{=[\beta(T(u)),T(\alpha_V(v))]})\beta_V(w)
      -\rho\Big(T\big(\alpha_V\beta_V(u)\big)\Big)\rho\Big(T\big(\alpha_V(v)\big)\Big)w
     +(-1)^{|u||v|}\rho\Big(T\big(\alpha_V\beta_V(v)\big)\Big)\rho\Big(T\big(\alpha_V(u)\big)\Big)w\\
      & & \ \ \  =\rho\Big(\alpha\beta(T(u))\Big)\rho\Big(T\big(\alpha_V(v)\big)\Big)w
     -(-1)^{|u||v|}\rho\Big(\beta(T\big(\alpha_V(v)\big)\Big)\rho\Big(\alpha(T(u))\Big)w\\
     & &\ \ \ -\rho\Big(T\big(\alpha_V\beta_V(u)\big)\Big)\rho\Big(T\big(\alpha_V(v)\big)\Big)w
     +(-1)^{|u||v|}\rho\Big(T\big(\alpha_V\beta_V(v)\big)\Big)\rho\Big(T\big(\alpha_V(u)\big)\Big)w\\
     & & \ \ \ =0
      \end{eqnarray*}
      It's since \eqref{bihom-lie-rep-1} - \eqref{bihom-lie-rep-3}.
      Therefore $(\mathcal{A},\circ,\alpha_V,\beta_V)$ is a BiHom pre-Lie superalgebra.
\end{proof}
\begin{rem}\label{rota-baxter=BiHomprelie}Let $(\mathcal{A},[~~,~~],\alpha,\beta)$ be a BiHom-Lie superalgebra and $R$ be the super $\mathcal{O}$-operator $($of weight zero$)$ associated to the adjoint representation $(\mathcal{A},ad,\alpha,\beta)$. Then the even binary operation given by
$x\circ y=[R(x),y]$, for all $x, y \in \mathcal{H}(\mathcal{A})$,
defines a BiHom pre-Lie superalgebra structure on $\mathcal{A}$.
\end{rem}
\begin{df}Let $(\mathcal{A},[~~,~~],R,\alpha,\beta)$ be a Rota-Baxter BiHom-Lie superalgebra  of weight zero. A Rota-Baxter operator on an $\mathcal{A}$-module $V$ $($relative to $R)$ is a map $R_V:V\longrightarrow V$ such that, for all $x \in \mathcal{H}(\mathcal{A})$ and $v \in \mathcal{H}(V)$,
\begin{eqnarray*}
    & &  [R(x),R_V(v)] = R_V \Big([R(x),v]+[x,R_V(v)]\Big),\\
    & & [R_V(v),R(x)] = R_V \Big([R_V(v),x]+[v,R(x)]\Big),
    \end{eqnarray*}
    where the action $\rho(x)(v)$ is denoted by $[x,v]$.
\end{df}
 The Lie-admissible algebras were studied by A. A. Albert in $1948$ and  M. Goze and E. Remm, in $2004$,  introduced the notion of $G$-associative algebras where $G$ is a subgroup of the permutation group $S_3$ $($see \cite{Goze}$)$. The graded case was studied by F. Ammar and A. Makhlouf in $2010$,  $($see \cite{Ammar-Makhlouf} for more details$)$. In \cite{Abdaoui-Mabrouk-Makhlouf} , the authors has been construct a functor from a full subcategory of the category of Rota-Baxter  Lie-admissible (or associative) superalgebras to the category of pre-Lie superalgebras. In this part, we study this construction in the BiHom case.
\begin{df}( \cite{WG-BiHom-Lie supealgebras} )\

 A BiHom-Lie admissible superalgebra is a BiHom superalgebra $(\mathcal{A},\mu,\alpha,\beta)$ in which the super-commutator
bracket, defined for all homogeneous $x,y$ in $\mathcal{A}$ by
$$[x,y] = \mu(x,y)-(-1)^{|x||y|}\mu(\alpha^{-1}\beta(y),\alpha\beta^{-1}(x)),$$
satisfies the BiHom super-Jacobi identity (\ref{B-H-sJ}).
\end{df}
\begin{df}\

Let $G$ be a subgroup of the permutation group $\mathcal{S}_3$. A Rota-Baxter $G$-BiHom-associative superalgebra of weight $\lambda \in \mathbb{K}$ is a $G$-BiHom-associative superalgebra $(\mathcal{A},\cdot,\alpha,\beta)$ together with an even linear self-map $R: \mathcal{A}\longrightarrow \mathcal{A}$
 that satisfies the identity
\begin{equation}\label{BiHom-iden-Rota}
    R(x) \cdot R(y) = R\big(R(x)\cdot y + x \cdot R(y) +\lambda x \cdot y\big),
\end{equation}
for all homogeneous elements $x,y,z$ in $\mathcal{A}$.
\end{df}
\begin{thm}
Let $(\mathcal{A},\cdot,R,\alpha,\beta)$ be a rota-Baxter BiHom-Lie admissible superalgebra of weight zero. The even binary operation "$\ast$" defined, for any homogeneous element $x,y\in\mathcal{A}$, by
$$x\ast y=[R(x),y].$$
Then $\mathcal{A}_L=(\mathcal{A},\ast,\alpha,\beta)$ is a BiHom-pre-Lie superalgebras.

\end{thm}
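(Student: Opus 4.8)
The plan is to reduce the statement to Remark \ref{rota-baxter=BiHomprelie}, which already produces a BiHom-pre-Lie superalgebra from a Rota-Baxter operator of weight zero on a BiHom-Lie superalgebra by means of the very operation $x \ast y = [R(x),y]$. Thus it suffices to exhibit an appropriate BiHom-Lie superalgebra structure on $\mathcal{A}$ together with $R$ as a Rota-Baxter operator of weight zero on it. Throughout I assume $\alpha,\beta$ to be invertible, as is needed for the super-commutator bracket to be defined.

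First I would record the structural identities. By hypothesis $\alpha\beta=\beta\alpha$, and as a Rota-Baxter operator $R$ commutes with $\alpha$ and $\beta$. The super-commutator bracket
$$[x,y]=x\cdot y-(-1)^{|x||y|}\alpha^{-1}\beta(y)\cdot\alpha\beta^{-1}(x)$$
is multiplicative for $\alpha$ and $\beta$ (this follows from the multiplicativity of $\cdot$ together with $\alpha\beta=\beta\alpha$), and by the very definition of a BiHom-Lie admissible superalgebra it satisfies the BiHom super skew-symmetry \eqref{B-H-skwesym} and the BiHom super-Jacobi identity \eqref{B-H-sJ}; hence $(\mathcal{A},[\ ,\ ],\alpha,\beta)$ is a BiHom-Lie superalgebra. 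Multiplicativity of $\ast$ is then immediate, since $\alpha(x\ast y)=\alpha([R(x),y])=[R(\alpha(x)),\alpha(y)]=\alpha(x)\ast\alpha(y)$, and likewise for $\beta$.

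The key step is to prove that $R$ is a Rota-Baxter operator of weight zero on the BiHom-Lie superalgebra just obtained, i.e. that $[R(x),R(y)]=R\big([R(x),y]+[x,R(y)]\big)$. I would expand the left-hand side using the definition of the bracket and push the twists through $R$ (so that $\alpha^{-1}\beta(R(y))=R(\alpha^{-1}\beta(y))$ and $\alpha\beta^{-1}(R(x))=R(\alpha\beta^{-1}(x))$), obtaining
$$[R(x),R(y)]=R(x)\cdot R(y)-(-1)^{|x||y|}R(\alpha^{-1}\beta(y))\cdot R(\alpha\beta^{-1}(x)).$$
Applying the weight-zero identity \eqref{BiHom-iden-Rota} to each of the two products $R(x)\cdot R(y)$ and $R(\alpha^{-1}\beta(y))\cdot R(\alpha\beta^{-1}(x))$ and collecting the outcome under $R$ yields exactly $R\big([R(x),y]+[x,R(y)]\big)$ after the latter is expanded by the definition of the bracket. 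I emphasize that this computation never invokes associativity; only the Rota-Baxter identity, the multiplicativity, and the commutation of $R$ with $\alpha,\beta$ are used, which is precisely why the Lie-admissibility hypothesis (rather than full associativity) is enough.

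Finally, BiHom super skew-symmetry \eqref{B-H-skwesym} gives $[x,R(y)]=-(-1)^{|x||y|}[R(\alpha^{-1}\beta(y)),\alpha\beta^{-1}(x)]$, so the Rota-Baxter condition on the bracket is exactly the super $\mathcal{O}$-operator condition of weight zero for $R$ relative to the adjoint representation $(\mathcal{A},ad,\alpha,\beta)$. Hence Remark \ref{rota-baxter=BiHomprelie} applies and shows that $x\ast y=[R(x),y]$ satisfies the BiHom-pre-Lie super-identity \eqref{id-Bihom-super-pre}; combined with the commutativity and multiplicativity already verified, this proves that $\mathcal{A}_L=(\mathcal{A},\ast,\alpha,\beta)$ is a BiHom-pre-Lie superalgebra. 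The main obstacle is the middle computation, whose only delicate points are keeping the twists $\alpha^{-1}\beta,\ \alpha\beta^{-1}$ and the Koszul signs consistent while the two products are rewritten.
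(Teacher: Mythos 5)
Your proposal is correct and follows the same route as the paper: the paper's proof is exactly the one-line reduction to Remark \ref{rota-baxter=BiHomprelie}, justified by the assertion that a Rota-Baxter operator on a BiHom-Lie admissible superalgebra is also a Rota-Baxter operator on its supercommutator BiHom-Lie superalgebra. Your write-up simply supplies the verification of that transfer fact (and of the skew-symmetry identification with the $\mathcal{O}$-operator condition for the adjoint representation) which the paper states without proof, and your computation is sound.
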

\begin{proof}
A direct consequence of Remark \eqref{rota-baxter=BiHomprelie}, since a Rota-Baxter operator on a BiHom-Lie admissible superalgebra is also a Rota-Baxter operatorof its supercommutator BiHom-lie superalgebra.
\end{proof}

\begin{thm}\label{RB+ass=BiHompreLie super} Let $(\mathcal{A},\cdot,R,\alpha,\beta)$ be a Rota-Baxter BiHom associative superalgebra of weight $\lambda=-1$. Define the even binary operation $"\circ"$ on any homogeneous element $x,y\in \mathcal{A}$, by
\begin{eqnarray}\label{ass+RB==pre-Lie}
x \circ y &=& R(x)\cdot y-(-1)^{|x||y|} \alpha^{-1}\beta(y) \cdot R(\alpha\beta^{-1}(x))-x\cdot y.
\end{eqnarray}
Then $\mathcal{A}_{L}=(\mathcal{A},\circ,\alpha,\beta)$ is a BiHom-pre-Lie superalgebra.
\end{thm}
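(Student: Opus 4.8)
The plan is to check the three axioms defining a BiHom-pre-Lie superalgebra for $(\mathcal{A},\circ,\alpha,\beta)$: the commutation $\alpha\beta=\beta\alpha$, the multiplicativity of $\alpha,\beta$ relative to $\circ$, and the BiHom-pre-Lie super-identity \eqref{id-Bihom-super-pre}. The commutation is inherited directly from the BiHom associative superalgebra $(\mathcal{A},\cdot,\alpha,\beta)$. For multiplicativity I would apply $\alpha$ to \eqref{ass+RB==pre-Lie} and distribute it over the three summands, using that $\alpha$ is multiplicative for $\cdot$, that $R$ is a super $\mathcal{O}$-operator (so $\alpha R=R\alpha$ and $\beta R=R\beta$), and that $\alpha\beta=\beta\alpha$; collecting the twists gives $\alpha(x\circ y)=\alpha(x)\circ\alpha(y)$, and the computation for $\beta$ is identical.

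The core of the proof is the super-identity \eqref{id-Bihom-super-pre}, which I would establish by a direct expansion modelled on the proof of the preceding theorem on Rota-Baxter BiHom-pre-Lie superalgebras. Setting $\mathrm{ass}(x,y,z)=(\beta(x)\circ\alpha(y))\circ\beta(z)-\alpha\beta(x)\circ(\alpha(y)\circ z)$, the aim is to show that the difference $\mathrm{ass}(x,y,z)-(-1)^{|x||y|}\mathrm{ass}(y,x,z)$ vanishes. I would substitute \eqref{ass+RB==pre-Lie} into each $\circ$; since every occurrence of $u\circ v$ unfolds into the three summands $R(u)\cdot v$, $-(-1)^{|u||v|}\alpha^{-1}\beta(v)\cdot R(\alpha\beta^{-1}(u))$, and $-u\cdot v$, this produces a large but finite family of triple products of $\cdot$ carrying one or two copies of $R$ and various powers of $\alpha^{\pm1},\beta^{\pm1}$.

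Two structural moves then bring the difference to zero. First, wherever the expanded terms appear in the pattern $R(R(a)\cdot b)+R(a\cdot R(b))-R(a\cdot b)$, I would use the weight $-1$ Rota-Baxter relation \eqref{R-B assoc}, namely $R(a)\cdot R(b)=R(R(a)\cdot b+a\cdot R(b)-a\cdot b)$, to reassemble them into the single product $R(a)\cdot R(b)$; the summand $-R(a\cdot b)$ needed to complete each such triple is supplied precisely by the explicit correction term $-x\cdot y$ built into \eqref{ass+RB==pre-Lie}, which is exactly why $\lambda=-1$ is the right normalization. Second, with the products $R(\cdot)\cdot R(\cdot)$ now present, I would apply BiHom-associativity $\alpha(a)\cdot(b\cdot c)=(a\cdot b)\cdot\beta(c)$ (together with multiplicativity and $\alpha R=R\alpha$, $\beta R=R\beta$) to rebracket the triple products, so that the whole difference collapses into BiHom-associators of $\cdot$, each of which vanishes identically.

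The genuine difficulty lies entirely in the bookkeeping rather than in any idea: one must track the Koszul signs $(-1)^{|x||y|}$, $(-1)^{|x||z|}$, $(-1)^{|y||z|}$ and their products, as well as the accumulated powers of $\alpha,\beta,\alpha^{-1},\beta^{-1}$, as terms migrate across the Rota-Baxter and associativity relations, and then match each term against its $x\leftrightarrow y$ partner with exactly the right sign and twist. This is lengthy but mechanical, and it follows the same pattern already carried out for the weight-zero BiHom-pre-Lie case in the preceding theorem, the only new feature being the extra $-x\cdot y$ terms that feed the $\lambda=-1$ Rota-Baxter assembly.
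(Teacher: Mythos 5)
Your proposal is correct and takes essentially the same route as the paper's proof: the paper likewise expands $as_{\mathcal{A}_{L}}(x,y,z)-(-1)^{|x||y|}as_{\mathcal{A}_{L}}(y,x,z)$ by substituting \eqref{ass+RB==pre-Lie} into each occurrence of $\circ$, reassembles the triples $R\big(R(a)\cdot b\big)+R\big(a\cdot R(b)\big)-R(a\cdot b)$ into $R(a)\cdot R(b)$ via the weight $\lambda=-1$ Rota-Baxter identity, and cancels everything that remains as BiHom-associators of $\cdot$ (using $\alpha R=R\alpha$, $\beta R=R\beta$ to move twists through $R$). There is nothing missing; the only part the paper leaves implicit, as you also note, is the routine verification of multiplicativity of $\alpha,\beta$ with respect to $\circ$.
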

\begin{proof}For all $x,y,z$ in $\mathcal{H}(\mathcal{A})$, we have
\begin{eqnarray*}
  (\beta(x)\circ \alpha(y))\circ \beta(z) &=& R\big(R(\beta(x))\cdot\alpha(y)\big)\cdot\beta(z)
  -(-1)^{|x||y|}R\big(\beta(y)\cdot R(\alpha(x))\big)\cdot\beta(z)-R\big(\beta(x)\cdot \alpha(y)\big)\cdot\beta(z)\\
  &-& (-1)^{|z|(|x|+|y|)}\alpha^{-1}\beta^2(z)\cdot R\big(R(\alpha(x))\cdot\alpha^2\beta^{-1}(y)\big) \\
   &+&(-1)^{|z|(|x|+|y|)+|x||y|}\alpha^{-1}\beta^2(z)\cdot R\big(\alpha(y)\cdot R(\alpha^2\beta^{-1}(x))\big)\\
   &+&(-1)^{|z|(|x|+|y|)}\alpha^{-1}\beta^2(z)\cdot R\big(\alpha(x)\cdot\alpha^2\beta^{-1}(y)\big)\\
   &-& \big(R(\beta(x))\cdot\alpha(y)\big)\cdot\beta(z)+(-1)^{|x||y|}\big(\beta(y)\cdot R(\alpha(x))\big)\cdot\beta(z)
    +\big(\beta(x)\cdot\alpha(y)\big)\cdot\beta(z).
  \end{eqnarray*}
   and
\begin{eqnarray*}
  \alpha\beta(x)\circ(\alpha(y)\circ z) &=& R\big(\alpha\beta(x)\big)\cdot\big(R(\alpha(y))\cdot z\big)
  -(-1)^{|y||z|}R\big(\alpha\beta(x)\big)\cdot\big(\alpha^{-1}\beta(z)\cdot R\big(\alpha^2\beta^{-1}(y)\big)\big)\\
  &-&R\big(\alpha\beta(x)\big)\cdot\big(\alpha(y)\cdot z\big)
   - (-1)^{|x|(|y|+|z|)}\big(R(\beta(y)\big)\cdot\alpha^{-1}\beta(z)\big)R(\alpha^2(x))\\
   &+&(-1)^{|x|(|y|+|z|)+|y||z|}\big(\alpha^{-2}\beta^2(z)\cdot R(\alpha(y)))\cdot R\big(\alpha^2(x)\big)\\
   &+&(-1)^{|x|(|y|+|z|)}\big(\beta(y)\cdot\alpha^{-1}\beta(z)\big)\cdot R\big(\alpha^2(x)\big)
   - \alpha\beta(x)\cdot\big(R(\alpha(y))\cdot z\big)\\&+&(-1)^{|y||z|}\alpha\beta(x)\cdot\big(\alpha^{-1}\beta(z)\cdot R(\alpha^2\beta^{-1}(y))\big)
   + \alpha\beta(x)\cdot\big(\alpha(y)\cdot z\big).
  \end{eqnarray*}
Then,  we obtain
\begin{eqnarray*}
  && as_{\mathcal{A}_{L}}(x,y,z)-(-1)^{|x||y|}as_{\mathcal{A}_{L}}(y,x,z) \\
   &=&(\beta(x)\circ \alpha(y))\circ \beta(z)-\alpha\beta(x)\circ(\alpha(y)\circ z)-(-1)^{|x||y|}(\beta(y)\circ \alpha(x))\circ \beta(z)
   +(-1)^{|x||y|}\alpha\beta(y)\circ(\alpha(x)\circ z)\\
   &=&  R\big(R(\beta(x))\cdot\alpha(y)\big)\cdot\beta(z)
  -(-1)^{|x||y|}R\big(\beta(y)\cdot R(\alpha(x))\big)\cdot\beta(z)-R\big(\beta(x)\cdot \alpha(y)\big)\cdot\beta(z)\\
  &-& (-1)^{|z|(|x|+|y|)}\alpha^{-1}\beta^2(z)\cdot R\big(R(\alpha(x))\cdot\alpha^2\beta^{-1}(y)\big) \\
   &+&(-1)^{|z|(|x|+|y|)+|x||y|}\alpha^{-1}\beta^2(z)\cdot R\big(\alpha(y)\cdot R(\alpha^2\beta^{-1}(x))\big)\\
   &+&(-1)^{|z|(|x|+|y|)}\alpha^{-1}\beta^2(z)\cdot R\big(\alpha(x)\cdot\alpha^2\beta^{-1}(y)\big)\\
   &-& \big(R(\beta(x))\cdot\alpha(y)\big)\cdot\beta(z)+(-1)^{|x||y|}\big(\beta(y)\cdot R(\alpha(x))\big)\cdot\beta(z)
    +\big(\beta(x)\cdot\alpha(y)\big)\cdot\beta(z)\\
    &-&R\big(\alpha\beta(x)\big)\cdot\big(R(\alpha(y))\cdot z\big)
  +(-1)^{|y||z|}R\big(\alpha\beta(x)\big)\cdot\big(\alpha^{-1}\beta(z)\cdot R\big(\alpha^2\beta^{-1}(y)\big)\big)\\
  &+&R\big(\alpha\beta(x)\big)\cdot\big(\alpha(y)\cdot z\big)
   + (-1)^{|x|(|y|+|z|)}\big(R(\beta(y)\big)\cdot\alpha^{-1}\beta(z)\big)R(\alpha^2(x))\\
   &-&(-1)^{|x|(|y|+|z|)+|y||z|}\big(\alpha^{-2}\beta^2(z)\cdot R(\alpha(y)))\cdot R\big(\alpha^2(x)\big)\\
   &-&(-1)^{|x|(|y|+|z|)}\big(\beta(y)\cdot\alpha^{-1}\beta(z)\big)\cdot R\big(\alpha^2(x)\big)
   + \alpha\beta(x)\cdot\big(R(\alpha(y))\cdot z\big)\\&-&(-1)^{|y||z|}\alpha\beta(x)\cdot\big(\alpha^{-1}\beta(z)\cdot R(\alpha^2\beta^{-1}(y))\big)
   - \alpha\beta(x)\cdot\big(\alpha(y)\cdot z\big)\\
   &-&(-1)^{|x||y|}R\big(R(\beta(y))\cdot\alpha(x)\big)\cdot\beta(z)
  +R\big(\beta(x)\cdot R(\alpha(y))\big)\cdot\beta(z)-R\big(\beta(y)\cdot \alpha(x)\big)\cdot\beta(z)\\
  &+& (-1)^{|z|(|x|+|y|)+|x||y|}\alpha^{-1}\beta^2(z)\cdot R\big(R(\alpha(y))\cdot\alpha^2\beta^{-1}(x)\big) \\
   &-&(-1)^{|z|(|x|+|y|)}\alpha^{-1}\beta^2(z)\cdot R\big(\alpha(x)\cdot R(\alpha^2\beta^{-1}(y))\big)\\
   &-&(-1)^{|z|(|x|+|y|)+|x||y|}\alpha^{-1}\beta^2(z)\cdot R\big(\alpha(y)\cdot\alpha^2\beta^{-1}(x)\big)\\
   &+&(-1)^{|x||y|} \big(R(\beta(y))\cdot\alpha(x)\big)\cdot\beta(z)-\big(\beta(x)\cdot R(\alpha(y))\big)\cdot\beta(z)
    -(-1)^{|x||y|}\big(\beta(y)\cdot\alpha(x)\big)\cdot\beta(z)\\
    &+&(-1)^{|x||y|}R\big(\alpha\beta(y)\big)\cdot\big(R(\alpha(x))\cdot z\big)
  -(-1)^{|y|(|x|+|z|)}R\big(\alpha\beta(y)\big)\cdot\big(\alpha^{-1}\beta(z)\cdot R\big(\alpha^2\beta^{-1}(x)\big)\big)\\
  &-&(-1)^{|x||y|}R\big(\alpha\beta(y)\big)\cdot\big(\alpha(x)\cdot z\big)
   - (-1)^{|x||z|}\big(R(\beta(x)\big)\cdot\alpha^{-1}\beta(z)\big)R(\alpha^2(y))\\
   &+&(-1)^{|z|(|x|+|y|)}\big(\alpha^{-2}\beta^2(z)\cdot R(\alpha(x)))\cdot R\big(\alpha^2(y)\big)\\
   &+&(-1)^{|x||z|}\big(\beta(x)\cdot\alpha^{-1}\beta(z)\big)\cdot R\big(\alpha^2(y)\big)
   -(-1)^{|x||y|} \alpha\beta(y)\cdot\big(R(\alpha(x))\cdot z\big)\\&-&(-1)^{|y||z|}\alpha\beta(y)\cdot\big(\alpha^{-1}\beta(z)\cdot R(\alpha^2\beta^{-1}(x))\big)
   +(-1)^{|x||y|} \alpha\beta(y)\cdot\big(\alpha(x)\cdot z\big)\\
   &=&\underbrace{\underbrace{\Big[R\big(R(\beta(x))\cdot\alpha(y)\big)+R\big(\beta(x)\cdot R(\alpha(y))\big)-R(\beta(x)\cdot\alpha(y))\Big]\cdot\beta(z)}_{=\big(R(\beta(x))\cdot R(\alpha(y))\big)\cdot\beta(z)}
   -R\big(\alpha\beta(x)\big)\cdot\big(R(\alpha(y))\cdot z\big)}_{=0}\\
\end{eqnarray*}
\begin{eqnarray*}
 &-&(-1)^{|x||y|}\Big[R\big(R(\beta(y))\cdot\alpha(x)\big)+R\big(\beta(y)\cdot R(\alpha(x))\big)-R(\beta(y)\cdot\alpha(x))\Big]\cdot\beta(z)
   +(-1)^{|x||y|}R\Big(\big(\alpha\beta(y)\big)\cdot\big(R(\alpha(x))\cdot z\big)\Big)\\
  &-&(-1)^{|z|(|x|+|y|)} \alpha^{-1}\beta^2(z)\cdot\Big[R\big(R(\alpha(x))\cdot\alpha^2\beta^{-1}(y)\big)+R\big(\alpha(x)\cdot R(\alpha^2\beta^{-1}(y))\big)-R(\alpha(x)\cdot\alpha^2\beta^{-1}(y))\Big]\\
  &+&(-1)^{|z|(|x|+|y|)}\Big(\big(\alpha^{-2}\beta^2(z)\cdot R(\alpha(x))\big)\cdot R(\alpha^2(y))\Big)\\
   &+&(-1)^{|z|(|x|+|y|)+|x||y|} \alpha^{-1}\beta^2(z)\cdot\Big[R\big(R(\alpha(y))\cdot\alpha^2\beta^{-1}(x)\big)+R\big(\alpha(y)\cdot R(\alpha^2\beta^{-1}(x))\big)-R(\alpha(y)\cdot\alpha^2\beta^{-1}(x))\Big]\\
   &-&(-1)^{|z|(|x|+|y|)+|x||y|}\Big(\big(\alpha^{-2}\beta^2(z)\cdot R(\alpha(y))\big)\cdot R(\alpha^2(x))\Big)
   -\Big[\big(R(\beta(x))\cdot\alpha(y)\big)\cdot\beta(z)-r(\alpha\beta(x))\cdot\big(\alpha(y)\cdot z\big)\Big]\\
   &+&(-1)^{|x||y|}\Big[\big(R(\beta(y))\cdot\alpha(x)\big)\cdot\beta(z)-r(\alpha\beta(y))\cdot\big(\alpha(x)\cdot z\big)\Big]
   +\Big[\alpha\beta(x)\cdot\big(R(\alpha(y))\cdot z\big)-\big(\beta(x)\cdot R(\alpha(y))\big)\cdot\beta(z)\Big]\\
   &+&(-1)^{|x||y|}\Big[\big(\beta(y)\cdot R(\alpha(x))\big)\cdot\beta(z)-\alpha\beta(y)\cdot\big(R(\alpha(x))\cdot z\big)\Big]
   +\Big[\big(\beta(x)\cdot\alpha(y)\big)\cdot\beta(z)-\alpha\beta(x)\cdot\big(\alpha(y)\cdot z\big)\Big]\\
   &-&(-1)^{|x||y|}\Big[\big(\beta(y)\cdot\alpha(x)\big)\cdot\beta(z)-\alpha\beta(y)\cdot\big(\alpha(x)\cdot z\big)\Big]\\
   &-&(-1)^{|y||z|}\Big[R(\alpha\beta(x))\cdot\big(\alpha^{-1}\beta(z)\cdot R(\alpha^2\beta^{-1}(y))\big)-
   \big(R(\beta(x))\cdot\alpha^{-1}\beta(z)\big)\cdot R(\alpha^2(y))\Big]\\
   &-&(-1)^{|x|(|y|+|z|)}\Big[R(\alpha\beta(y))\cdot\big(\alpha^{-1}\beta(z)\cdot R(\alpha^2\beta^{-1}(x))\big)-
   \big(R(\beta(y))\cdot\alpha^{-1}\beta(z)\big)\cdot R(\alpha^2(x))\Big]\\
   &-&(-1)^{|y||z|}\Big[\alpha\beta(x)\cdot\big(\alpha^{-1}\beta(z)\cdot R(\alpha^2\beta^{-1}(y))\big)-
   \big(\beta(x)\cdot\alpha^{-1}\beta(z)\big)\cdot R(\alpha^2(y))\Big]\\
   &-&(-1)^{|x|(|y|+|z|)}\Big[\alpha\beta(y)\cdot\big(\alpha^{-1}\beta(z)\cdot R(\alpha^2\beta^{-1}(x))\big)-
   \big(\beta(y)\cdot\alpha^{-1}\beta(z)\big)\cdot R(\alpha^2(x))\Big]\\
   &=&0.
   \end{eqnarray*}
It is since the associativity and the Rota-Baxter identity (\ref{BiHom-iden-Rota}) with $ \lambda=-1$.
\end{proof}
\begin{cor}\label{rota-BiHo-ass-pre-Lie} Let $(\mathcal{A},\cdot,R,\alpha,\beta)$ be a Rota-Baxter BiHom associative superalgebra of weight $\lambda=-1$. Then $R$ is still a Rota-Baxter operator of weight $\lambda=-1$ on the BiHom-pre-Lie superalgebra $(\mathcal{A},\circ,\alpha,\beta)$ defined in  (\ref{ass+RB==pre-Lie}).
\end{cor}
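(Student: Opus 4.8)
The plan is to verify directly the single remaining fact: that $R$ satisfies the weight $\lambda=-1$ Rota-Baxter identity for the product $\circ$, namely
\[
R(x)\circ R(y)=R\big(R(x)\circ y+x\circ R(y)-x\circ y\big)
\]
for all homogeneous $x,y$. Theorem \ref{RB+ass=BiHompreLie super} already supplies that $(\mathcal{A},\circ,\alpha,\beta)$ is a BiHom-pre-Lie superalgebra, so nothing else needs checking and I may freely use that $R$ is even and commutes with $\alpha$ and $\beta$ (hence $\alpha^{-1}\beta R=R\alpha^{-1}\beta$ and $R\alpha\beta^{-1}=\alpha\beta^{-1}R$).

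First I would expand the left-hand side by the defining formula \eqref{ass+RB==pre-Lie} applied to $u=R(x)$, $v=R(y)$, shuttling $\alpha^{-1}\beta$ through $R$. This gives
\[
R(x)\circ R(y)=R^2(x)\cdot R(y)-(-1)^{|x||y|}R(\alpha^{-1}\beta(y))\cdot R^2(\alpha\beta^{-1}(x))-R(x)\cdot R(y).
\]
The crucial observation is that each of these three summands is a product of two elements lying in the image of $R$: in the middle one $R^2(\alpha\beta^{-1}(x))=R\big(R(\alpha\beta^{-1}(x))\big)$, so it reads $R\big(\alpha^{-1}\beta(y)\big)\cdot R\big(R(\alpha\beta^{-1}(x))\big)$. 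I would then apply the associative Rota-Baxter relation \eqref{BiHom-iden-Rota} with $\lambda=-1$ to each of the three products $R(a)\cdot R(b)$ and use linearity of $R$ to collect everything into a single $R(\cdots)$, so that the left-hand side becomes $R$ applied to a sum of nine monomials (three from each product).

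Separately I would expand $R(x)\circ y+x\circ R(y)-x\circ y$ using the same definition \eqref{ass+RB==pre-Lie}, again pushing $\alpha^{-1}\beta$ through $R$, producing another sum of nine monomials. The final step is to match the two arguments of $R$ term by term: each of $R^2(x)\cdot y$, $R(x)\cdot R(y)$, $x\cdot y$, two copies of $R(x)\cdot y$, $x\cdot R(y)$, together with their $(-1)^{|x||y|}$-signed twisted counterparts $\alpha^{-1}\beta(y)\cdot R^2(\alpha\beta^{-1}(x))$, $R(\alpha^{-1}\beta(y))\cdot R(\alpha\beta^{-1}(x))$ and $\alpha^{-1}\beta(y)\cdot R(\alpha\beta^{-1}(x))$, appears with the identical coefficient on both sides. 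Since $R$ is linear the two arguments coincide, which is exactly the claimed identity.

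The point I would emphasize is that no associativity of $\cdot$, and no pre-Lie identity, is actually invoked: the two nine-term expressions are literally the same monomials, so the result is a purely formal consequence of multiplicativity of $R$ and the weight $-1$ Rota-Baxter relation. The only genuine difficulty is therefore bookkeeping --- keeping the Koszul signs $(-1)^{|x||y|}$ and the exponents of $\alpha,\beta$ correct while moving them in and out of $R$ --- so the proof is essentially a careful sign- and twist-tracking exercise rather than a structural one.
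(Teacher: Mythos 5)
Your verification is correct: expanding $R(x)\circ R(y)$ via the definition of $\circ$, pushing $\alpha^{\pm1}\beta^{\mp1}$ through $R$ (legitimate, since a Rota--Baxter operator commutes with $\alpha$ and $\beta$ by definition, hence also with their inverses), applying the weight $-1$ identity to the three products of $R$-images, and matching the resulting nine monomials against the expansion of $R\big(R(x)\circ y+x\circ R(y)-x\circ y\big)$ does close the argument, and the coefficients (including the Koszul signs, which survive because $R$ is even) agree term by term. The paper offers no proof at all for this corollary --- it is stated as immediate after Theorem \ref{RB+ass=BiHompreLie super} --- so your computation is exactly the verification the paper leaves implicit, and your observation that only multiplicativity-type properties of $R$ and the associative Rota--Baxter relation are needed (not associativity of $\cdot$ itself, which enters only in proving that $\circ$ is BiHom-pre-Lie) is a accurate and worthwhile clarification.
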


As a consequence of Theorem \ref{RB+ass=BiHompreLie super} and Corollary \ref{rota-BiHo-ass-pre-Lie}, we have
\begin{prop} Let $(\mathcal{A},\cdot,R,\alpha,\beta)$ be a Rota-Baxter BiHom associative superalgebra of weight $\lambda=-1$. Then the binary operation defined, for any homogeneous elements $x,\;y$ in $\mathcal{A}$, by
\begin{eqnarray*}
[x,y]&=&R(x)\cdot y-(-1)^{|x||y|}\alpha^{-1}\beta(y)\cdot R(\alpha\beta^{-1}(x))-x\cdot y+x\cdot R(y)\\&-&(-1)^{|x||y|}R(\alpha^{-1}\beta(y))\cdot\alpha\beta^{-1}(x)+(1)^{|x||y|}\alpha^{-1}\beta(y)\cdot \alpha\beta^{-1}(x),
\end{eqnarray*}
defines a Rota-Baxter Lie superalgebra $(\mathcal{A},[~,~],R,\alpha,\beta)$ of weight $\lambda=-1$.
\end{prop}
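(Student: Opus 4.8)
The plan is to recognise the six-term bracket in the statement as the \emph{sub-adjacent} bracket of the BiHom-pre-Lie superalgebra furnished by Theorem \ref{RB+ass=BiHompreLie super}, and then to transport the Rota-Baxter property from the pre-Lie product to its commutator. Throughout, $\alpha$ and $\beta$ are assumed bijective (as the very formula for $[\,\cdot\,,\cdot\,]$ requires), and I will use repeatedly that $\alpha\beta=\beta\alpha$ and that $R$ commutes with $\alpha$ and $\beta$, so that $\alpha^{-1}\beta R=R\alpha^{-1}\beta$ and $\alpha\beta^{-1}R=R\alpha\beta^{-1}$.

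First I would put
\[
x\circ y = R(x)\cdot y-(-1)^{|x||y|}\alpha^{-1}\beta(y)\cdot R(\alpha\beta^{-1}(x))-x\cdot y,
\]
which by Theorem \ref{RB+ass=BiHompreLie super} makes $(\mathcal{A},\circ,\alpha,\beta)$ a BiHom-pre-Lie superalgebra, and compute its sub-adjacent bracket $[x,y]_C=x\circ y-(-1)^{|x||y|}(\alpha^{-1}\beta(y))\circ(\alpha\beta^{-1}(x))$. Substituting $a=\alpha^{-1}\beta(y)$ and $b=\alpha\beta^{-1}(x)$ into the definition of $\circ$ and simplifying the twists via $\alpha^{-1}\beta(b)=x$ and $\alpha\beta^{-1}(a)=y$, the product $a\circ b$ collapses to $R(\alpha^{-1}\beta(y))\cdot\alpha\beta^{-1}(x)-(-1)^{|x||y|}x\cdot R(y)-\alpha^{-1}\beta(y)\cdot\alpha\beta^{-1}(x)$. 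Inserting this back (and using $(-1)^{2|x||y|}=1$) reproduces exactly the six summands of the statement, so $[x,y]=[x,y]_C$; in particular the factor $(1)^{|x||y|}$ in front of the last term is a misprint for $(-1)^{|x||y|}$. By the Proposition constructing the sub-adjacent BiHom-Lie superalgebra, $(\mathcal{A},[\,\cdot\,,\cdot\,],\alpha,\beta)$ is therefore a BiHom-Lie superalgebra.

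It then remains to verify that $R$ satisfies the Rota-Baxter identity of weight $-1$ for this bracket. By Corollary \ref{rota-BiHo-ass-pre-Lie}, $R$ is a Rota-Baxter operator of weight $-1$ on $(\mathcal{A},\circ,\alpha,\beta)$, that is $R(u)\circ R(v)=R\big(R(u)\circ v+u\circ R(v)-u\circ v\big)$. Writing $p=\alpha^{-1}\beta(y)$ and $q=\alpha\beta^{-1}(x)$ and using the equivariance of $R$ to pull $\alpha^{-1}\beta$ and $\alpha\beta^{-1}$ through $R$, I would expand
\[
[R(x),R(y)]_C=R(x)\circ R(y)-(-1)^{|x||y|}R(p)\circ R(q)
\]
and apply the pre-Lie Rota-Baxter identity to each of the two products. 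The result is $R$ applied to $R(x)\circ y+x\circ R(y)-x\circ y-(-1)^{|x||y|}\big(R(p)\circ q+p\circ R(q)-p\circ q\big)$, and a direct expansion of $[R(x),y]_C+[x,R(y)]_C-[x,y]_C$ gives precisely this same element. Hence $[R(x),R(y)]_C=R\big([R(x),y]_C+[x,R(y)]_C-[x,y]_C\big)$, which is the required identity.

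The only genuinely computational point is the first identification $[x,y]=[x,y]_C$: one must track the Koszul signs carefully and check that the $\alpha,\beta$-twists telescope under $\alpha\beta=\beta\alpha$. I expect this sign-and-twist bookkeeping to be the main obstacle; once it is settled, the transfer of the Rota-Baxter property from the pre-Lie product to its commutator is essentially formal, resting only on the equivariance of $R$ and two applications of the pre-Lie Rota-Baxter identity.
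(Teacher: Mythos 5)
Your proposal is correct and takes essentially the same approach as the paper: the paper gives no separate proof, stating the proposition precisely as a consequence of Theorem \ref{RB+ass=BiHompreLie super} and Corollary \ref{rota-BiHo-ass-pre-Lie}, i.e.\ the bracket is the sub-adjacent bracket $[\cdot,\cdot]_C$ of the pre-Lie product $\circ$ and the weight $-1$ Rota-Baxter identity transfers to it formally. Your write-up supplies exactly the details the paper leaves implicit, including the correct observation that $(1)^{|x||y|}$ in the last term is a misprint for $(-1)^{|x||y|}$.
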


\section{BiHom-$L$-Dendriform Superalgebras}
The notion of $L$-dendriform algebra was introduced by C. Bai, L. Liu and X. Ni in $2010$  $($see \cite{Bai-Liu L-dendrifom}$)$. The Hom case was introduced by Ibrahima Bakayoko $($see \cite{IbrahimaBakayoko}$)$. In this section, we extend this notion to the graded case of BiHom-$L$-dendriform superalgebra. Then we study  relationships between  associative BiHom superalgebras, BiHom-$L$-dendriform superalgebras and BiHom-pre-Lie superalgebras.  Moreover, we introduce the notion of Rota-Baxter operator $($of weight zero$)$ on the $\mathcal{A}$-bimodule and we provide construction of associative bimodules from bimodules over BiHom-$L$-dendriform superalgebras.
\subsection{BiHom-$L$-Dendriform  Superalgebras and BiHom associative Superalgebras}
\subsubsection{Definition and Some Basic Properties}

\begin{df}A BiHom-$L$-dendriform superalgebra is a $5$-tuple $(\mathcal{A},\rhd,\lhd,\alpha,\beta)$ consisting of a $\mathbb{Z}_2$-graded vector space $\mathcal{A}$,  two even bilinear maps
$\rhd,\lhd:\mathcal{A}\otimes \mathcal{A}\longrightarrow\mathcal{A}$ and two commuting even linear maps $\alpha,\beta  : \mathcal{A} \longrightarrow \mathcal{A}$ satisfying the following conditions (for all $x, y, z\in\mathcal{H}(\mathcal{A})$):
\begin{eqnarray}\label{alpha-multip-dend}
&&\alpha(x \rhd y) = \alpha(x)\rhd\alpha(y)\;,\; \alpha(x \lhd y) = \alpha(x) \lhd \alpha(y),\\
\label{beta-multip-dend} &&\beta(x \rhd y) = \beta(x)\rhd\beta(y)\;,\; \beta(x \lhd y) = \beta(x) \lhd \beta(y),
\end{eqnarray}
\small{\begin{eqnarray}\label{1-Ldend}
\alpha\beta(x)\rhd(\alpha(y)\rhd z)&=&(\beta(x)\rhd \alpha(y))\rhd \beta(z)+(\beta(x)\lhd \alpha(y))\rhd \beta(z)
+(-1)^{|x||y|}\alpha\beta(y)\rhd (\alpha(x)\rhd z)\\
&-&(-1)^{|x||y|}(\beta(y)\lhd \alpha(x))\rhd \beta(z)-(-1)^{|x||y|}(\beta(y)\rhd \alpha(x))\rhd \beta(z),\nonumber
 \end{eqnarray}}
 \begin{eqnarray}\label{2-Ldend}
\alpha\beta(x)\rhd(\alpha(y)\lhd z)&=&(\beta(x)\rhd \alpha(y))\lhd \beta(z)+(-1)^{|x||y|}\alpha\beta(y)\lhd (\alpha(x)\rhd z)\\
&+&(-1)^{|x||y|}\alpha\beta(y)\lhd (\alpha(x) \lhd z)-(-1)^{|x||y|}(\beta(y)\lhd \alpha(x))\lhd \beta(z).\nonumber
 \end{eqnarray}
\end{df}
 \begin{prop}Let $(\mathcal{A},\rhd,\lhd,\alpha,\beta)$ be a BiHom-$L$-dendriform superalgebra.
\begin{enumerate}
\item The even binary operation $\circ:\mathcal{A}\otimes \mathcal{A}\longrightarrow \mathcal{A}$ given for all $x,y \in \mathcal{H}(\mathcal{A})$ by
\begin{eqnarray}
\label{BiHom-pre-L-dend-superalg1} x\circ y&=&x\triangleright y-(-1)^{|x||y|}\alpha^{-1}\beta(y)\triangleleft\alpha\beta^{-1}(x),
\end{eqnarray}
 defines a BiHom-pre-Lie superalgebra $(\mathcal{A},\circ,\alpha,\beta)$ which is called the associated vertical BiHom-pre-Lie superalgebra of $(\mathcal{A},\rhd,\lhd,\alpha,\beta)$ and $(\mathcal{A},\rhd,\lhd,\alpha,\beta)$ is called a compatible BiHom-L-dendriform superalgebra structure on the BiHom-pre-Lie superalgebra $(\mathcal{A},\circ,\alpha,\beta)$.
\item The even binary operation $\bullet:\mathcal{A}\otimes \mathcal{A}\longrightarrow \mathcal{A}$ given by
\begin{eqnarray}
\label{BiHom-pre-L-dend-superalg2} x\bullet y=x\rhd y+x \lhd y,~~\forall~~x,y \in \mathcal{H}(\mathcal{A})
 \end{eqnarray}
defines a BiHom-pre-Lie superalgebra $(\mathcal{A},\bullet,\alpha,\beta)$ which is called the associated horizontal BiHom-pre-Lie superalgebra of $(\mathcal{A},\rhd,\lhd,\alpha,\beta)$ and $(\mathcal{A},\rhd,\lhd,\alpha,\beta)$ is called a compatible BiHom-L-dendriform superalgebra structure on the BiHom-pre-Lie superalgebra $(\mathcal{A},\bullet,\alpha,\beta)$.
\end{enumerate}
\end{prop}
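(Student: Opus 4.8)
The plan is to verify, for each of the two products, the three conditions defining a BiHom-pre-Lie superalgebra: the commutation $\alpha\beta=\beta\alpha$, the multiplicativity of $\alpha$ and $\beta$, and the BiHom-pre-Lie super-identity \eqref{id-Bihom-super-pre}. The commutation is inherited verbatim from the BiHom-$L$-dendriform data. For the horizontal product, multiplicativity is immediate: adding the two halves of \eqref{alpha-multip-dend} gives $\alpha(x\bullet y)=\alpha(x)\bullet\alpha(y)$, and likewise for $\beta$ from \eqref{beta-multip-dend}. For the vertical product one must also push $\alpha$ (resp. $\beta$) through the twisted term $\alpha^{-1}\beta(y)\lhd\alpha\beta^{-1}(x)$; here $\alpha\beta=\beta\alpha$, the multiplicativity \eqref{alpha-multip-dend}-\eqref{beta-multip-dend}, and the invertibility of $\alpha,\beta$ (implicit in the very definition \eqref{BiHom-pre-L-dend-superalg1}) combine to reproduce $\alpha(x)\circ\alpha(y)$ and $\beta(x)\circ\beta(y)$. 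Thus the real content in both parts is the pre-Lie super-identity.

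For the horizontal product I would set $as_\bullet(x,y,z)=(\beta(x)\bullet\alpha(y))\bullet\beta(z)-\alpha\beta(x)\bullet(\alpha(y)\bullet z)$ and expand each factor via $u\bullet v=u\rhd v+u\lhd v$, producing four summands in each associator. The two summands that are not already symmetric under $x\leftrightarrow y$ up to the Koszul sign are $\alpha\beta(x)\rhd(\alpha(y)\rhd z)$ and $\alpha\beta(x)\rhd(\alpha(y)\lhd z)$; I would rewrite the first by \eqref{1-Ldend} and the second by \eqref{2-Ldend}. After substitution most terms cancel pairwise against $(-1)^{|x||y|}as_\bullet(y,x,z)$, leaving a short residual block whose only summand of mixed type is $\alpha\beta(y)\rhd(\alpha(x)\lhd z)$; applying \eqref{2-Ldend} once more with $x$ and $y$ interchanged rewrites this summand and the block telescopes to $0$. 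Hence $(\mathcal{A},\bullet,\alpha,\beta)$ satisfies \eqref{id-Bihom-super-pre}.

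For the vertical product the scheme is the same: expand the difference $as_\circ(x,y,z)-(-1)^{|x||y|}as_\circ(y,x,z)$ using $u\circ v=u\rhd v-(-1)^{|u||v|}\alpha^{-1}\beta(v)\lhd\alpha\beta^{-1}(u)$, normalize each summand to the canonical argument shapes $(\beta(\cdot),\alpha(\cdot),\beta(\cdot))$ and $(\alpha\beta(\cdot),\alpha(\cdot),\cdot)$ by commuting $\alpha,\beta$ through $\rhd$ and $\lhd$, and then feed the resulting $\rhd$-over-$\rhd$ and $\rhd$-over-$\lhd$ terms into \eqref{1-Ldend} and \eqref{2-Ldend} (together with their $x\leftrightarrow y$ swaps). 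I expect the main obstacle to be exactly this normalization: each appearance of the second summand of $\circ$ shifts the arguments by a power of $\alpha^{\pm1}\beta^{\mp1}$ and carries its own Koszul sign, so one must repeatedly invoke $\alpha\beta=\beta\alpha$, the multiplicativity relations, and invertibility to line up matching terms before any cancellation is visible. Once all arguments are aligned the same two $L$-dendriform identities that settled the horizontal case force the vertical associator difference to vanish, completing the proof that $(\mathcal{A},\circ,\alpha,\beta)$ is a BiHom-pre-Lie superalgebra.
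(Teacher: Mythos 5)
You should first know that the paper states this proposition with no proof at all (it sits immediately after the definition of a BiHom-$L$-dendriform superalgebra, with no proof environment), so there is no argument of the author's to compare yours against; judged on its own merits, your plan is correct, and I checked that it goes through. For the horizontal product, $as_\bullet(x,y,z)-(-1)^{|x||y|}as_\bullet(y,x,z)$ expands into sixteen terms; the six of the shapes $(\cdot\rhd\cdot)\rhd\beta(z)$, $(\cdot\lhd\cdot)\rhd\beta(z)$ and $\alpha\beta(\cdot)\rhd(\alpha(\cdot)\rhd z)$ cancel exactly by \eqref{1-Ldend}, and the remaining ten split into two five-term blocks, one killed by \eqref{2-Ldend} and the other by \eqref{2-Ldend} with $x$ and $y$ interchanged, exactly as you describe. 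For the vertical product the same three-block pattern works, but one point that your ``normalization'' step leaves implicit deserves to be spelled out: the $\rhd$-over-$\lhd$ terms that actually arise are $\alpha\beta(x)\rhd\big(\alpha^{-1}\beta(z)\lhd\alpha^{2}\beta^{-1}(y)\big)$ and its $x\leftrightarrow y$ partner, so \eqref{2-Ldend} is not applied to the triple $(x,y,z)$ but to $\big(x,\,\alpha^{-2}\beta(z),\,\alpha^{2}\beta^{-1}(y)\big)$, i.e.\ with a twisted copy of $z$ in the \emph{middle} slot; after the simplifications $\beta\big(\alpha^{2}\beta^{-1}(y)\big)=\alpha^{2}(y)$ and $\alpha\beta\big(\alpha^{-2}\beta(z)\big)=\alpha^{-1}\beta^{2}(z)$, and noting that the Koszul sign comes out as $(-1)^{|x||z|}$ because $\alpha,\beta$ preserve parity, the resulting terms match the leftover ones sign for sign and the two five-term blocks collapse. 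This bookkeeping also locates precisely where bijectivity of $\alpha,\beta$ is genuinely used: part (1) needs it (already to define $\circ$, and again in these substitutions), whereas part (2) requires no invertibility at all.
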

 \begin{rem}
 If $\alpha=\beta=Id$, then $(\mathcal{A},\circ)$ and $(\mathcal{A},\bullet)$ are a pre-Lie superalgebras.
 \end{rem}
\begin{cor}
Let $(\mathcal{A},\rhd,\lhd,\alpha,\beta)$ be a BiHom-$L$-dendriform superalgebra. Then the brackets
\begin{eqnarray}\label{BiHom-Lie-L-dend-sup-alg1}
[x,y]&=& x\circ y-(-1)^{|x||y|}\alpha^{-1}\beta(y)\circ \alpha\beta^{-1}(x)\\
&=& x\triangleright y-(-1)^{|x||y|}\alpha^{-1}\beta(y)\triangleleft\alpha\beta^{-1}(x)-(-1)^{|x||y|}\alpha^{-1}\beta(y)\triangleright \alpha\beta^{-1}(x)+x\triangleleft y
\nonumber,
\end{eqnarray}
define a BiHom-Lie superalgebra structure on $\mathcal{A}$.
\end{cor}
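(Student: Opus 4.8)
The plan is to obtain this Corollary as an immediate consequence of two results already established, rather than by a direct verification of the BiHom super skew-symmetry \eqref{B-H-skwesym} and the BiHom super-Jacobi identity \eqref{B-H-sJ}. First I would invoke part (1) of the preceding Proposition: the operation $x\circ y = x\triangleright y-(-1)^{|x||y|}\alpha^{-1}\beta(y)\triangleleft\alpha\beta^{-1}(x)$ of \eqref{BiHom-pre-L-dend-superalg1} equips $\mathcal{A}$ with the structure of a BiHom-pre-Lie superalgebra $(\mathcal{A},\circ,\alpha,\beta)$, the associated vertical BiHom-pre-Lie superalgebra. This already presupposes that $\alpha$ and $\beta$ are bijective, which is exactly what the appearance of $\alpha^{-1}$ and $\beta^{-1}$ in the formula requires.

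Next I would apply the sub-adjacent construction, namely part (1) of the Proposition introducing $\mathcal{A}^C$, to this BiHom-pre-Lie superalgebra $(\mathcal{A},\circ,\alpha,\beta)$. That Proposition states precisely that the bracket $[x,y]_C = x\circ y-(-1)^{|x||y|}(\alpha^{-1}\beta(y))\circ(\alpha\beta^{-1}(x))$ defines a BiHom-Lie superalgebra. Since the first displayed line of the Corollary is literally this bracket $[x,y]_C$ with $\circ$ the vertical product, both the skew-symmetry and the Jacobi identity come for free, and no fresh computation of Jacobi-type expressions is needed.

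It then remains only to justify the second displayed equality, i.e.\ to rewrite $[x,y]_C$ purely in terms of $\triangleright$ and $\triangleleft$. For this I would substitute \eqref{BiHom-pre-L-dend-superalg1} into both occurrences of $\circ$. The first term $x\circ y$ reproduces $x\triangleright y-(-1)^{|x||y|}\alpha^{-1}\beta(y)\triangleleft\alpha\beta^{-1}(x)$ directly. For the second term one sets $u=\alpha^{-1}\beta(y)$ and $v=\alpha\beta^{-1}(x)$, notes $|u|=|y|$ and $|v|=|x|$, and uses the commutativity of $\alpha$ and $\beta$ to compute $\alpha^{-1}\beta(v)=x$ and $\alpha\beta^{-1}(u)=y$; this yields $u\circ v=\alpha^{-1}\beta(y)\triangleright\alpha\beta^{-1}(x)-(-1)^{|x||y|}\,x\triangleleft y$. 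Multiplying by $-(-1)^{|x||y|}$ and using $(-1)^{2|x||y|}=1$ turns the last summand into $+\,x\triangleleft y$ and produces the remaining two terms of the claimed formula.

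The only genuine subtlety, the \emph{main obstacle} such as it is, is the sign bookkeeping in this last substitution: one must track the factor $(-1)^{|x||y|}$ attached to the second $\circ$-term, the internal sign $(-1)^{|u||v|}=(-1)^{|x||y|}$ produced by expanding $u\circ v$, and their product collapsing via $(-1)^{2|x||y|}=1$. Once these are handled, the two displayed lines agree identically, and the Corollary follows.
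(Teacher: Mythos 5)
Your proposal is correct and is exactly the argument the paper intends: the Corollary is stated without proof as the composition of the sub-adjacent BiHom-Lie construction $[x,y]_C=x\circ y-(-1)^{|x||y|}(\alpha^{-1}\beta(y))\circ(\alpha\beta^{-1}(x))$ applied to the associated vertical BiHom-pre-Lie product \eqref{BiHom-pre-L-dend-superalg1}, with the second displayed line obtained by direct substitution. Your sign bookkeeping (using $|u|=|y|$, $|v|=|x|$, $\alpha^{-1}\beta(v)=x$, $\alpha\beta^{-1}(u)=y$, and $(-1)^{2|x||y|}=1$) is accurate, so nothing is missing.
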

\begin{rem}
\begin{eqnarray*}
\{x,y\}&=&x\bullet y-(-1)^{|x||y|}\alpha^{-1}\beta(y)\bullet\alpha\beta^{-1}(x)\\
&=&x\rhd y-(-1)^{|x||y|}\alpha^{-1}\beta(y)\lhd\alpha\beta^{-1}(x)-(-1)^{|x||y|}\alpha^{-1}\beta(y)\rhd\alpha\beta^{-1}(x)+x \lhd y\\
&=&[x,y].
\end{eqnarray*}
Then $\{,\}$ is also defined a BiHom-Lie superalgebra structure on $\mathcal{A}$.
\end{rem}

The below result allows to obtain a BiHom-L-dendriform superalgebra from a given one by transposition.

\begin{prop}
Let $(\mathcal{A},\triangleright,\triangleleft, \alpha,\beta)$ be a BiHom-L-dendriform superalgebra. Define two even binary operations
$\triangleright^t, \triangleleft^t : A \otimes A \longrightarrow A$ by
\begin{equation}\label{BiHom-transpose-L-dend}
x \triangleright^t y := x \triangleright y,\;\;\;\; x \triangleleft^t  y := -(-1)^{|x||y|}\alpha^{-1}\beta(y) \triangleleft\alpha\beta^{-1}(y)(x)
\end{equation}
Then $(A, \triangleright^t, \triangleleft^t, a)$ is a BiHom-L-dendriform superalgebra, and $\circ^t =\bullet$ and $\bullet^t =\circ$.
The BiHom-L-dendriform superalgebra $(A, \triangleright^t, \triangleleft^t,\alpha,\beta)$ is called the transpose of $(A,\triangleright,\triangleleft,\alpha,\beta)$.
\begin{proof}

\end{proof}

\end{prop}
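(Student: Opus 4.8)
The plan is to verify the three assertions in turn, disposing of $\circ^t=\bullet$ and $\bullet^t=\circ$ first, since they are short, and reserving the L-dendriform axioms for $(\mathcal{A},\triangleright^t,\triangleleft^t,\alpha,\beta)$ as the substantial part. Throughout I will lean on the defining relation $u\triangleleft^t v=-(-1)^{|u||v|}\alpha^{-1}\beta(v)\triangleleft\alpha\beta^{-1}(u)$, the commutativity and bijectivity of $\alpha,\beta$, and the resulting collapse identities $\alpha^{-1}\beta(\alpha\beta^{-1}(x))=x$ and $\alpha\beta^{-1}(\alpha^{-1}\beta(x))=x$. Multiplicativity is the easy first step: since $\alpha,\beta$ are multiplicative for $\triangleright,\triangleleft$ and commute, one computes directly $\alpha(x\triangleleft^t y)=-(-1)^{|x||y|}\beta(y)\triangleleft\alpha^2\beta^{-1}(x)=\alpha(x)\triangleleft^t\alpha(y)$, and similarly for $\beta$ and (trivially) for $\triangleright^t=\triangleright$.

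For $\circ^t=\bullet$ I would substitute the transpose definitions into $x\circ^t y=x\triangleright^t y-(-1)^{|x||y|}\alpha^{-1}\beta(y)\triangleleft^t\alpha\beta^{-1}(x)$; expanding the $\triangleleft^t$ term and cancelling the twists via the collapse identities produces $-(-1)^{|x||y|}\cdot\big(-(-1)^{|x||y|}\big)x\triangleleft y=x\triangleleft y$, whence $x\circ^t y=x\triangleright y+x\triangleleft y=x\bullet y$. The computation $\bullet^t=\circ$ is immediate, since $x\bullet^t y=x\triangleright^t y+x\triangleleft^t y$ is by definition the right-hand side of \eqref{BiHom-pre-L-dend-superalg1}.

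The core of the argument is that \eqref{1-Ldend} and \eqref{2-Ldend} persist under $\triangleright\mapsto\triangleright^t,\ \triangleleft\mapsto\triangleleft^t$, and my strategy is to exploit structure rather than expand blindly. Because $\triangleright^t=\triangleright$, the outer product in \eqref{1-Ldend} is unchanged and only its two $\triangleleft$-summands are affected; using $\beta(x)\triangleleft^t\alpha(y)=-(-1)^{|x||y|}\beta(y)\triangleleft\alpha(x)$ one finds that these two transposed summands coincide (sign and all) with the other two original $\triangleleft$-summands of \eqref{1-Ldend}. Thus the transposed \eqref{1-Ldend} is, term by term, a permutation of the original \eqref{1-Ldend}, hence holds. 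For \eqref{2-Ldend} I would expand every $\triangleleft^t$, including the doubly-nested terms (the third and fourth summands, where one $\triangleleft^t$ feeds into another), and track the emerging powers of $\alpha,\beta$ and the Koszul signs; the anticipated outcome is that the transposed identity equals $-(-1)^{|y||z|}$ times the original \eqref{2-Ldend} evaluated at $(x,\alpha^{-2}\beta(z),\alpha^2\beta^{-1}(y))$. Since $\alpha,\beta$ are bijective this reparametrization sweeps out all homogeneous triples, so the transposed \eqref{2-Ldend} is equivalent to the original one.

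I expect the only real obstacle to be the bookkeeping in this last step: the doubly-nested $\triangleleft^t$ summands generate products such as $(\alpha^{-2}\beta^2(z)\triangleleft\alpha(x))\triangleleft\alpha^2(y)$, and one must confirm that after the substitution $y\mapsto\alpha^{-2}\beta(z),\ z\mapsto\alpha^2\beta^{-1}(y)$ their prefactors collapse to the \emph{same} common sign $-(-1)^{|y||z|}$ that is extracted from the left-hand side. Reconciling the three sign types $(-1)^{|x||y|}$, $(-1)^{|y||z|}$, $(-1)^{(|x|+|y|)|z|}$ across all four summands, in tandem with the $\alpha,\beta$-powers produced by repeated use of the collapse identities, is the delicate point; once a single consistent common factor is pulled out of the left-hand side and of each right-hand summand, the identity follows at once from \eqref{2-Ldend}. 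The involutivity $(\triangleright^t)^t=\triangleright$ and $(\triangleleft^t)^t=\triangleleft$ emerges as a byproduct of the same collapse identities, confirming that the construction deserves the name \emph{transpose}.
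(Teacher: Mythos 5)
The paper's own ``proof'' of this proposition is an empty environment, so there is nothing to compare your argument against: what you wrote supplies exactly what the paper omits, and your strategy is correct. The two short verifications are right: $x\circ^t y=x\triangleright y-(-1)^{|x||y|}\alpha^{-1}\beta(y)\triangleleft^t\alpha\beta^{-1}(x)=x\triangleright y+x\triangleleft y=x\bullet y$ by the collapse identities, and $\bullet^t=\circ$ is definitional. Your treatment of \eqref{1-Ldend} is also correct: since $\triangleright^t=\triangleright$, only the two $\triangleleft$-summands change, and $\beta(x)\triangleleft^t\alpha(y)=-(-1)^{|x||y|}\beta(y)\triangleleft\alpha(x)$ shows that the transposed identity is a term-by-term permutation of the original. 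For \eqref{2-Ldend}, the outcome you only ``anticipate'' is in fact exactly what happens: setting $Y=\alpha^{-2}\beta(z)$ and $Z=\alpha^{2}\beta^{-1}(y)$, each side of the transposed identity at $(x,y,z)$ equals $-(-1)^{|y||z|}$ times the corresponding side of the original identity at $(x,Y,Z)$, the four right-hand summands matching in the order $1\leftrightarrow2$, $2\leftrightarrow1$, $3\leftrightarrow4$, $4\leftrightarrow3$; for instance the doubly nested third summand expands to $(-1)^{|x||z|+|y||z|}\bigl(\alpha^{-2}\beta^{2}(z)\triangleleft\alpha(x)\bigr)\triangleleft\alpha^{2}(y)$, which is precisely $-(-1)^{|y||z|}$ times $-(-1)^{|x||Y|}\bigl(\beta(Y)\triangleleft\alpha(x)\bigr)\triangleleft\beta(Z)$. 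To make the proposal a complete proof you need only write out these four checks instead of deferring them; they all close as you predicted. Two small points worth recording: the construction silently requires $\alpha,\beta$ to be bijective (they are inverted in the very definition of $\triangleleft^t$, although the paper's definition of a BiHom-$L$-dendriform superalgebra does not demand this), and the paper's formula \eqref{BiHom-transpose-L-dend} contains a typo --- $\alpha\beta^{-1}(y)(x)$ should read $\alpha\beta^{-1}(x)$, which is the definition you correctly used throughout.
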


\begin{df}\
\begin{enumerate}
\item Let $(\mathcal{A},\rhd,\lhd,\alpha,\beta)$ be a BiHom-$L$-dendriform superalgebra, $V$ be a $\mathbb{Z}_2$-graded vector space, $l_\rhd,r_\rhd,l_\lhd,r_\lhd:\mathcal{A}\longrightarrow End(V)$ be four even linear maps and $\alpha_V,\beta_V:V\longrightarrow V$ be two even linear maps. The tuple $(V,l_\rhd,r_\rhd,l_\lhd,r_\lhd,\alpha_V,\beta_V)$ is an $\mathcal{A}$-bimodule if for any homogeneous elements $x,y\in \mathcal{A}$,  the following identities are satisfied
\begin{enumerate}
\item  $ l_\rhd([\beta(x),\alpha(y)])\beta_V=l_\rhd(\alpha\beta(x))l_\rhd(\alpha(y))-(-1)^{|x||y|}l_\rhd(\alpha\beta(y))l_\rhd(\alpha(x)),$
\item    $ l_\lhd(\beta(x) \circ \alpha(y))\beta_V=l_\rhd(\alpha\beta(x))l_\lhd(\alpha(y))-(-1)^{|x||y|}l_\lhd(\alpha\beta(y))l_\lhd(\alpha(x))
-(-1)^{|x||y|}l_\lhd(\alpha\beta(y))l_\rhd(\alpha(x)),$
\item
 $r_\rhd(\alpha(x)\rhd y)\alpha_V\beta_V=r_\rhd(\beta(y))r_\rhd(\alpha(x))\beta_V+r_\rhd(\beta(y))r_\lhd(\alpha(x))\beta_V+(-1)^{|x||y|}l_\rhd(\alpha\beta(x))r_\rhd(y)\alpha_V\\
 \;\;\;\;\;\;\;\;\;\;\;-(-1)^{|x||y|}r_\rhd(\beta(y))l_\rhd(\beta(x))\alpha_V-(-1)^{|x||y|}r_\rhd(\beta(y))l_\lhd(\beta(x))\alpha_V$,
\item   $  r_\rhd(\alpha(x)\lhd y)\alpha_V\beta_V= r_\lhd(\beta(y))r_\rhd(\alpha(x))\beta_V+(-1)^{|x||y|}l_\lhd(\alpha\beta(x))r_\rhd(y)\alpha_V\\+(-1)^{|x||y|}l_\lhd(\alpha\beta(x))r_\lhd(y)\alpha_V
    -(-1)^{|x||y|}r_\lhd(\beta(y))l_\lhd(\beta(x))\alpha_V,$
\item    $ r_\lhd(\alpha(x)\bullet y)\alpha_V\beta_V=r_\lhd(\beta(y))r_\lhd(\alpha(x))\beta_V-(-1)^{|x||y|} r_\lhd(\beta(y))l_\rhd(\beta(x))\alpha_V+(-1)^{|x||y|}l_\rhd(\alpha\beta(x))r_\lhd(y)\alpha_V.$
\end{enumerate}
Moreover, The tuple $(V,\rhd_V,\lhd_V,l_\rhd,r_\rhd,l_\lhd,r_\lhd,\alpha_V,\beta_V)$ is an $\mathcal{A}$-bimodule $\K$-superalgebra if  the following identities are satisfied
\begin{enumerate}
\item    $ \alpha_V\beta_V(u)\rhd_Vl_\rhd(\alpha(x))(v)-(-1)^{|x||u|}l_\rhd(\alpha\beta(x))(\alpha_V(u)\rhd_V v)=
r_\rhd(\alpha(x))(\beta_V(u))\rhd_V\beta_V(v)$\\
 $  \hspace{3cm} +r_\lhd(\alpha(x))(\beta_V(u))\rhd_V\beta_V(v)-(-1)^{|x||u|}l_\lhd(\beta(x))(\alpha_V(u)\rhd_V \beta_V(v))$\\
$ \hspace{3cm}-(-1)^{|x||u|}l_\rhd(\beta(x))(\alpha_V(u)\rhd_V \beta_V(v)),$
\item    $ \alpha_V\beta_V(u)\rhd_Vr_\rhd(x)(\alpha_V(v))-(-1)^{|x||u|}\alpha_V\beta_V(v)\rhd_Vr_\rhd(x)(\alpha_V(u))=
r_\rhd(\beta(x))(\beta_V(u)\bullet_V\alpha_V(v))-(-1)^{|x||u|}r_\rhd(\beta(x))(\beta_V(v)\bullet_V\alpha_V(u)),$
\item    $l_\lhd(\alpha\beta(x))(\alpha_V(u)\lhd_v v)=l_\rhd(\beta(x))(\alpha_V(u))\lhd_V \beta_V(v)
+(-1)^{|x||u|}\alpha_V\beta_V(u)\lhd_Vl_\rhd(\alpha(x))(v)+(-1)^{|x||u|} \alpha_V\beta_V(u)\lhd_Vl_\lhd(\alpha(x))(v)
 -(-1)^{|x||u|}r_\lhd(\alpha(x))(\beta_V(u))\lhd_V\beta_V(v).$
 \item    $\alpha_V\beta_V(u)\lhd_Vl_\lhd(\alpha(x))(v)=r_\rhd(\alpha(x))(\beta_V(u))\lhd_V \beta_V(v)
+(-1)^{|x||u|}l_\lhd(\alpha\beta(x))(\alpha_V(u)\rhd_V v)+(-1)^{|x||u|}l_\lhd(\alpha\beta(x))(\alpha_V(u)\lhd_v v)
 -(-1)^{|x||u|}l_\lhd(\beta(x))(\alpha_V(u))\lhd_V\beta_V(v).$
\end{enumerate}

\item Let $(\mathcal{A},\rhd,\lhd,\alpha,\beta)$ be a BiHom-$L$-dendriform superalgebra and $(V,\rhd_V,\lhd_V,l_\rhd,r_\rhd,l_\lhd,r_\lhd,\alpha_V,\beta_V)$ be an $\mathcal{A}$-bimodule $\K$-superalgebra. An even linear map
$T:V\longrightarrow \mathcal{A}$ is called a super $\mathcal{O}$-operator of weight $\lambda \in \mathbb{K}$ associated to $(V,\rhd_V,\lhd_V,l_\rhd,r_\rhd,l_\lhd,r_\lhd,\alpha_V,\beta_V)$ if $T$ satisfies for any homogeneous elements $u,v$ in $V$
\begin{eqnarray*}
\alpha\circ T&=&T\circ\alpha_V~~~~~ and~~~~ \beta\circ T=T\circ\beta_V,\\
   T(u)\rhd T(v)&=& T\Big(l_\rhd(T(u))v+(-1)^{|u||v|}r_\rhd(T(v))u+\lambda u\rhd_V v \Big),\\
    T(u)\lhd T(v)&=& T\Big(l_\lhd(T(u))v+(-1)^{|u||v|}r_\lhd(T(v))u +\lambda u\lhd_V v\Big).
\end{eqnarray*}
\end{enumerate}
In particular, a super $\mathcal{O}$-operator of weight $\lambda \in \mathbb{K}$ of the BiHom-$L$-dendriform superalgebra $(\mathcal{A},\rhd,\lhd,\alpha,\beta)$
associated to the bimodule $(\mathcal{A},l_\rhd,r_\rhd,l_\lhd,r_\lhd,\alpha_V,\beta_V)$ is called a Rota-Baxter operator $($of weight $\lambda)$ on $(\mathcal{A},\rhd,\lhd,\alpha,\beta)$, that is, $R$ satisfies for any homogeneous elements $x,y$ in $\mathcal{A}$
\begin{eqnarray*}
\alpha\circ R&=&R\circ\alpha~~~~~ and~~~~ \beta\circ R=R\circ\beta,\\
   R(x)\rhd R(y)&=& R\Big(R(x)\rhd y+x\rhd R(y)+\lambda x\rhd y\Big),\\
    R(x)\lhd R(y)&=& R\Big(R(x)\lhd y+x\lhd R(y)+ \lambda x\lhd y\Big).
\end{eqnarray*}
\end{df}
The following theorem provides a construction of BiHom-$L$-dendriform superalgebras using super $\mathcal{O}$-operators of BiHom associative superalgebras.
\begin{thm}\label{ass=L-dendriform} Let $(\mathcal{A},\mu,\alpha,\beta)$ be a BiHom associative superalgebra and $(V,l,r,\alpha_V,\beta_V)$ be a $\mathcal{A}$-bimodule. If $T$ is a super $\mathcal{O}$-operator of weight zero associated with $(V,l,r,\alpha_V,\beta_V)$, then there exists a BiHom-$L$-dendriform superalgebra structure on $V$ defined by
\begin{equation}\label{th-operator1-ass}
    u \rhd v=l(T(u))v,~~u \lhd v= (-1)^{|u||v|}r(T(v))u,~~\forall~~u,v \in \mathcal{H}(V).
\end{equation}
\end{thm}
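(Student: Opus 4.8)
The plan is to verify the five defining axioms of a BiHom-$L$-dendriform superalgebra for $(V,\rhd,\lhd,\alpha_V,\beta_V)$, where $u\rhd v=l(T(u))v$ and $u\lhd v=(-1)^{|u||v|}r(T(v))u$. The two multiplicativity axioms \eqref{alpha-multip-dend}--\eqref{beta-multip-dend}, read on $V$, are immediate: using the covariance relation $l(\alpha(x))\alpha_V=\alpha_V l(x)$ of the bimodule together with $\alpha\circ T=T\circ\alpha_V$ one gets
$\alpha_V(u\rhd v)=\alpha_V(l(T(u))v)=l(\alpha(T(u)))\alpha_V(v)=l(T(\alpha_V(u)))\alpha_V(v)=\alpha_V(u)\rhd\alpha_V(v)$,
and the analogous one-line computation with $r(\alpha(x))\alpha_V=\alpha_V r(x)$ and the evenness of $\alpha_V$ gives $\alpha_V(u\lhd v)=\alpha_V(u)\lhd\alpha_V(v)$; the two $\beta_V$-identities follow verbatim from $l(\beta(x))\beta_V=\beta_V l(x)$, $r(\beta(x))\beta_V=\beta_V r(x)$ and $\beta\circ T=T\circ\beta_V$. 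This part is routine.

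The substance is the two coherence identities \eqref{1-Ldend} and \eqref{2-Ldend}, read on $V$ for homogeneous $u,v,w$. The engine of the computation is the weight-zero $\mathcal{O}$-operator relation, which in the present notation says exactly
$$\mu(T(\xi),T(\eta))=T\big(\xi\rhd\eta+\xi\lhd\eta\big),\qquad \xi,\eta\in\mathcal{H}(V),$$
i.e.\ $T$ carries the horizontal sum $\rhd+\lhd$ to the $\mu$-product. I would first rewrite every $\rhd$ and $\lhd$ as $l(T(\cdot))$ and a Koszul sign times $r(T(\cdot))$, so that both sides of each identity become words in $l,r,T$ applied to the deepest argument. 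In \eqref{1-Ldend} the left-hand term $\alpha_V\beta_V(u)\rhd(\alpha_V(v)\rhd w)$ collapses by $l(\alpha(x))l(y)=l(\mu(x,y))\beta_V$ (with $x=\beta(T(u))$, $y=\alpha(T(v))$) to $l\big(\mu(\beta(T(u)),\alpha(T(v)))\big)\beta_V(w)$; on the right the first two terms share the outer factor $(\,\cdot\,)\rhd\beta_V(w)$, so their inner arguments add to $\beta_V(u)\rhd\alpha_V(v)+\beta_V(u)\lhd\alpha_V(v)$ and the $\mathcal{O}$-operator relation applied to the pair $(\beta_V(u),\alpha_V(v))$ reproduces precisely the left-hand side. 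The third term reduces by $l(\alpha(x))l(y)=l(\mu(x,y))\beta_V$ to $l\big(\mu(\beta(T(v)),\alpha(T(u)))\big)\beta_V(w)$, and the fourth and fifth combine through the $\mathcal{O}$-operator relation for $(\beta_V(v),\alpha_V(u))$ to its negative, so the last three terms cancel; this gives \eqref{1-Ldend}.

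For \eqref{2-Ldend} the mechanism is parallel but uses the mixed and right axioms. The left-hand side $\alpha_V\beta_V(u)\rhd(\alpha_V(v)\lhd w)$ is turned by $l(\alpha(x))r(y)=r(\beta(y))l(x)$ into the word $r(\beta(T(w)))\,l(\beta(T(u)))\,\alpha_V(v)$, matching the first right-hand term $(\beta_V(u)\rhd\alpha_V(v))\lhd\beta_V(w)$. The two terms $\alpha_V\beta_V(v)\lhd(\alpha_V(u)\rhd w)$ and $\alpha_V\beta_V(v)\lhd(\alpha_V(u)\lhd w)$ carry the same outer Koszul factor and the same inner parity $|u|+|w|$, so their inner arguments add to $\alpha_V(u)\rhd w+\alpha_V(u)\lhd w$; the $\mathcal{O}$-operator relation then replaces $T$ of this sum by $\mu(\alpha(T(u)),T(w))$, and $r(\mu(x,y))\alpha_V=r(\beta(y))r(x)$ rewrites the result as $r(\beta(T(w)))\,r(\alpha(T(u)))\,\beta_V(v)$, which cancels the last term $(\beta_V(v)\lhd\alpha_V(u))\lhd\beta_V(w)$. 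Hence \eqref{2-Ldend} holds and $(V,\rhd,\lhd,\alpha_V,\beta_V)$ is a BiHom-$L$-dendriform superalgebra.

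The only genuinely delicate point is the sign bookkeeping. Every $\lhd$ emits a Koszul factor, and each nested operation emits a further factor governed by the parity of a composite argument, so one must check that the paired terms acquire identical exponents of $(-1)$ before the ($\rhd$/$\lhd$-split) $\mathcal{O}$-operator relation can be applied; reconciling the exponents coming from the inner and outer operations across the two nested products is where essentially all the effort lies, whereas the structural moves are nothing more than the seven bimodule axioms and the single $\mathcal{O}$-operator identity. I would therefore organize the proof so that the signs are tracked symbolically term by term, and I expect this matching of exponents—rather than any conceptual step—to be the main obstacle.
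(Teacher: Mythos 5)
Your strategy is the same as the paper's: expand every product into words in $l(T(\cdot))$ and $r(T(\cdot))$, pair the $\rhd$/$\lhd$ terms through the weight-zero super $\mathcal{O}$-operator identity $T(\xi)T(\eta)=T(\xi\rhd\eta+\xi\lhd\eta)$, and collapse with the bimodule axioms. For identity \eqref{1-Ldend} your description is exactly the paper's computation, and the signs genuinely close there: only the left axiom $l(\alpha(x))l(y)=l(\mu(x,y))\beta_V$ is needed (it carries no Koszul sign), the first two right-hand terms reproduce the left-hand side, and the last two cancel the third, just as you say.

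The gap is in \eqref{2-Ldend}, at precisely the point you deferred. With the unsigned axioms you quote, the two matchings you assert each fail by a factor $(-1)^{|u||w|}$. Concretely, $\alpha_V\beta_V(u)\rhd(\alpha_V(v)\lhd w)=(-1)^{|v||w|}l(\alpha\beta(T(u)))r(T(w))\alpha_V(v)$, which the mixed axiom turns into $(-1)^{|v||w|}r(\beta(T(w)))l(\beta(T(u)))\alpha_V(v)$; but $(\beta_V(u)\rhd\alpha_V(v))\lhd\beta_V(w)=(-1)^{(|u|+|v|)|w|}r(\beta(T(w)))l(\beta(T(u)))\alpha_V(v)$, because the outer $\lhd$ in \eqref{th-operator1-ass} emits the Koszul sign of the \emph{total} parity $|u|+|v|$ of its left argument. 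The same factor $(-1)^{|u||w|}$ separates the sum of the two middle right-hand terms (after the $\mathcal{O}$-operator identity and the right axiom $r(\mu(x,y))\alpha_V=r(\beta(y))r(x)$) from the last term. So under the axioms as literally stated, \eqref{2-Ldend} reduces to $\bigl(1-(-1)^{|u||w|}\bigr)\bigl(r(\beta(T(w)))l(\beta(T(u)))\alpha_V(v)-r(\beta(T(w)))r(\alpha(T(u)))\beta_V(v)\bigr)=0$, which is not identically true (take $u$ and $w$ odd). The cancellation does go through if the mixed and right axioms are read with Koszul signs, $l(\alpha(x))r(y)=(-1)^{|x||y|}r(\beta(y))l(x)$ and $r(\mu(x,y))\alpha_V=(-1)^{|x||y|}r(\beta(y))r(x)$: these signed forms are what the regular bimodule satisfies under the paper's own convention $R_\mu(y)(x)=(-1)^{|x||y|}x\cdot y$, and they are forced if the theorem is to specialize to the Rota--Baxter corollary $x\lhd y=\mu(x,R(y))$. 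Note the paper itself only writes out \eqref{1-Ldend} and disposes of \eqref{2-Ldend} with ``similarly,'' so it shares this defect; but your proof, as written, asserts a cancellation that fails under the axioms you invoke, and it is incomplete until the sign conventions are repaired.
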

\begin{proof}By a direct computation,For any homogeneous elements $u,v$ and $w$ in $V$, we have\\
\begin{eqnarray*}
&& (\beta_V(u)\rhd \alpha_V(v))\rhd \beta_V(w)+(\beta_V(u)\lhd \alpha_V(v))\rhd \beta_V(w)-\alpha_V\beta_V(u)\rhd(\alpha_V(v)\rhd w)\\
&& +(-1)^{|u||v|}\alpha_V\beta_V(v)\rhd (\alpha_V(u)\rhd w)-(-1)^{|u||v|}(\beta_V(v)\lhd \alpha_V(u))\rhd \beta_V(w)
-(-1)^{|u||v|}(\beta_V(v)\rhd \alpha_V(u))\rhd \beta_V(w)\\
&& =l\Big(T\big(l(T(\beta_V(u)))\big)\alpha_V(v)\Big)\beta_V(w)+(-1)^{|u||v|}l\Big(T\big(r(T(\alpha_V(v)))\big)\beta_V(u)\Big)\beta_V(w)\\
&&-l\Big(T\big(\alpha_V\beta_V(u)\big)\Big)l\Big(T\big(\alpha_V(v)\big)\Big)w
+(-1)^{|u||v|}l\Big(T\big(\alpha_V\beta_V(v)\big)\Big)l\Big(T\big(\alpha_V(u)\big)\Big)w\\
&& -l\Big(T\big(r(T(\alpha_V(u)))\big)\beta_V(v)\Big)\beta_V(w)-(-)^{|u||v|}l\Big(T\big(l(T(\beta_V(v)))\big)\alpha_V(u)\Big)\beta_V(w)\\
&& =0,
\end{eqnarray*}
and similarly, we have
\begin{eqnarray*}
&& \alpha\beta(x)\rhd(\alpha(y)\lhd z)-(\beta(x)\rhd \alpha(y))\lhd \beta(z)-(-1)^{|x||y|}\alpha\beta(y)\lhd (\alpha(x)\rhd z)\\
&& -(-1)^{|x||y|}\alpha\beta(y)\lhd (\alpha(x) \lhd z)+(-1)^{|x||y|}(\beta(y)\lhd \alpha(x))\lhd \beta(z)=0
\end{eqnarray*}
Therefore $(V,\rhd,\lhd,\alpha_V,\beta_V)$ is a BiHom-$L$-dendriform superalgebra.
\end{proof}
A direct consequence of Theorem \ref{ass=L-dendriform} is the following construction of a BiHom-$L$-dendriform superalgebra from a Rota-Baxter operator $($of weight zero$)$ of a BiHom associative superalgebra.
\begin{cor}Let $(\mathcal{A},\mu,\alpha,\beta,R)$ be a Rota-Baxter BiHom associative superalgebra of weight zero. Then, the even binary operations given by
$$ x\rhd y=\mu(R(x),y),~~~~x\lhd y=\mu(x,R(y)),~~~~\forall~~x,y\in \mathcal{H}(\mathcal{A})$$
defines a BiHom-$L$-dendriform superalgebra structure on $\mathcal{A}$.
\end{cor}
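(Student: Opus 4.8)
The plan is to obtain this corollary as an immediate specialization of Theorem \ref{ass=L-dendriform}, applied to $\mathcal{A}$ viewed as a module over itself. First I would recall the regular bimodule structure: the left and right multiplication operators $L_\mu, R_\mu : \mathcal{A} \to End(\mathcal{A})$, determined by $L_\mu(x)(y)=\mu(x,y)$ and $(-1)^{|x||y|}R_\mu(y)(x)=\mu(x,y)$, make $(\mathcal{A}, L_\mu, R_\mu, \alpha, \beta)$ into an $\mathcal{A}$-bimodule, the bimodule axioms reducing in this case to the multiplicativity and BiHom-associativity of $\mu$. Then I would invoke the identification already recorded in Section 1, namely that a Rota-Baxter operator $R$ of weight zero on $(\mathcal{A},\mu,\alpha,\beta)$ is precisely a super $\mathcal{O}$-operator of weight zero associated with this regular bimodule: both are governed by the identity \eqref{R-B assoc} with $\lambda=0$, together with the commutation relations $\alpha\circ R=R\circ\alpha$ and $\beta\circ R=R\circ\beta$.

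With this set-up, Theorem \ref{ass=L-dendriform} applies verbatim with $T=R$, $l=L_\mu$, $r=R_\mu$, $\alpha_V=\alpha$, $\beta_V=\beta$, producing a BiHom-$L$-dendriform superalgebra structure on $\mathcal{A}$ given by $x\rhd y=l(T(x))y=L_\mu(R(x))y$ and $x\lhd y=(-1)^{|x||y|}r(T(y))x=(-1)^{|x||y|}R_\mu(R(y))x$. The only remaining task is to unwind these in terms of $\mu$. The left operation is immediate: $x\rhd y=L_\mu(R(x))y=\mu(R(x),y)$. For the right operation I would use $R_\mu(b)(a)=(-1)^{|a||b|}\mu(a,b)$ with $b=R(y)$; since $R$ is even, $|R(y)|=|y|$, so $R_\mu(R(y))x=(-1)^{|x||y|}\mu(x,R(y))$, and the prefactor $(-1)^{|x||y|}$ cancels, yielding $x\lhd y=\mu(x,R(y))$. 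These are exactly the operations in the statement, so $(\mathcal{A},\rhd,\lhd,\alpha,\beta)$ is a BiHom-$L$-dendriform superalgebra.

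There is no genuine obstacle here, as the result is a direct specialization of Theorem \ref{ass=L-dendriform}; the single point requiring care is the bookkeeping of Koszul signs when rewriting $R_\mu(R(y))x$ in terms of $\mu$, to confirm that the sign $(-1)^{|x||y|}$ built into the definition of $\lhd$ through $r$ cancels against the sign intrinsic to $R_\mu$, giving the sign-free formula $x\lhd y=\mu(x,R(y))$ as claimed.
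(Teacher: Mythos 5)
Your proposal is correct and matches the paper's own treatment: the paper states this corollary as ``a direct consequence of Theorem \ref{ass=L-dendriform}'' with no further argument, and your proof simply makes that specialization explicit by taking $T=R$ on the regular bimodule $(\mathcal{A},L_\mu,R_\mu,\alpha,\beta)$ and checking that the Koszul sign in $R_\mu$ cancels the sign in the definition of $\lhd$, yielding $x\lhd y=\mu(x,R(y))$. The identification of a weight-zero Rota--Baxter operator with a super $\mathcal{O}$-operator on this bimodule is exactly the convention the paper fixes in Section 1, so nothing further is needed.
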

\begin{df}Let $(\mathcal{A},\rhd,\lhd,\alpha,\beta)$ be a BiHom-$L$-dendriform superalgebra and $R:\mathcal{A}\longrightarrow \mathcal{A}$ be a Rota-Baxter operator of weight zero. A Rota-Baxter operator on $\mathcal{A}$-bimodule $V$ $($relative to $R)$ is an even map $R_V:V\longrightarrow V$ such that for all
homogeneous elements $x$ in $\mathcal{A}$ and $v$ in $V$
\begin{eqnarray*}
    & & \ \ \ R(x)\rhd R_V(v) = R_V \Big(R(x)\rhd v+x \rhd R_V(v)\Big), \ \
     R_V(v)\rhd R(x) = R_V \Big(R_V(v)\rhd x+v \rhd R(x)\Big),\\
    & & \ \ \ R(x)\lhd R_V(v) = R_V \Big(R(x)\lhd v+x \lhd R_V(v)\Big), \ \
    R_V(v)\lhd R(x) = R_V \Big(R_V(v)\lhd x+v \lhd R(x)\Big).
    \end{eqnarray*}
\end{df}
\begin{prop}Let $(\mathcal{A},\mu,\alpha,\beta)$ be a BiHom associative superalgebra, $R:\mathcal{A}\longrightarrow \mathcal{A}$ a Rota-Baxter operator on $\mathcal{A}$, $V$  an $\mathcal{A}$-bimodule and $R_V$ a Rota-Baxter operator on $V$. Define a new actions of $\mathcal{A}$ on $V$ by
$$ x\rhd v=\mu(R(x),v),~~v\rhd x=\mu(R_V(v),x),~~x\lhd v=\mu(x,R_V(v)),~~v\lhd x=\mu(v,R(x)).$$
Equipped with these actions, $V$ becomes  an $\mathcal{A}$-bimodule over the associated  BiHom-$L$-dendriform superalgebra.
\end{prop}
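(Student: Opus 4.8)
The plan is to realize the desired bimodule as the restriction of an $L$-dendriform structure on a single larger superalgebra, so that conditions (a)--(e) come for free from the Corollary following Theorem~\ref{ass=L-dendriform} rather than being checked one at a time. Concretely, I would first form the split null extension $B=\mathcal{A}\oplus V$, graded by $B_i=\mathcal{A}_i\oplus V_i$, with twisting maps $\hat\alpha=\alpha\oplus\alpha_V$, $\hat\beta=\beta\oplus\beta_V$ and multiplication
\[
\mu_B\big((x,u),(y,v)\big)=\big(\mu(x,y),\,l(x)(v)+r(y)(u)\big).
\]
That $(B,\mu_B,\hat\alpha,\hat\beta)$ is a BiHom associative superalgebra is exactly the content of the $\mathcal{A}$-bimodule axioms for $(V,l,r,\alpha_V,\beta_V)$ together with the multiplicativity and BiHom-associativity of $\mu$; this is routine.

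The key step is to show that $\hat R=R\oplus R_V$ is a Rota-Baxter operator of weight zero on $B$. Expanding the Rota-Baxter identity for $\hat R$ on two elements $(x,u),(y,v)$ and separating the $\mathcal{A}$- and $V$-components, the $\mathcal{A}$-part reproduces precisely the weight-zero Rota-Baxter identity for $R$ on $\mathcal{A}$, while the $V$-part collapses, via $R(0)=0$, $R_V(0)=0$ and bilinearity, into the two relations
\[
R(x)R_V(v)=R_V\big(R(x)v+xR_V(v)\big),\qquad R_V(v)R(x)=R_V\big(R_V(v)x+vR(x)\big)
\]
that define a Rota-Baxter operator on the $\mathcal{A}$-bimodule $V$. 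The commutation $\hat R\hat\alpha=\hat\alpha\hat R$, $\hat R\hat\beta=\hat\beta\hat R$ follows from the corresponding relations for $R$ and for $R_V$.

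By the Corollary following Theorem~\ref{ass=L-dendriform}, $B$ then carries the associated BiHom-$L$-dendriform structure $P\rhd_BQ=\mu_B(\hat R(P),Q)$, $P\lhd_BQ=\mu_B(P,\hat R(Q))$. Since $\hat R(V)\subseteq V$ and $\mu_B(V,V)=0$, the subspace $V$ is a square-zero ideal, so $\rhd_B,\lhd_B$ restrict to maps $\mathcal{A}\otimes V\to V$ and $V\otimes\mathcal{A}\to V$; evaluating them on the generators $(x,0)$ and $(0,v)$ returns exactly
\[
x\rhd v=\mu(R(x),v),\quad v\rhd x=\mu(R_V(v),x),\quad x\lhd v=\mu(x,R_V(v)),\quad v\lhd x=\mu(v,R(x)),
\]
i.e. the four structure maps $l_\rhd,r_\rhd,l_\lhd,r_\lhd$ of the statement. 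Specializing the defining identities \eqref{1-Ldend} and \eqref{2-Ldend} of the BiHom-$L$-dendriform superalgebra $B$ to arguments with one entry in $V$ and two in $\mathcal{A}$, and using that $V$ is square-zero so that all terms with two $V$-entries vanish, yields precisely conditions (a)--(e) for $(V,l_\rhd,r_\rhd,l_\lhd,r_\lhd,\alpha_V,\beta_V)$; here the products of two $\mathcal{A}$-entries inside $B$ reproduce the $\mathcal{A}$-level operations $\rhd,\lhd$ and their sub-adjacent bracket, which is why the brackets and the $\circ,\bullet$ products appearing in (a)--(e) come out correctly.

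The main obstacle I anticipate is sign bookkeeping: verifying that $\mu_B$ is BiHom associative and, above all, matching the restricted identities to the specific placement of $\alpha,\beta,\alpha_V,\beta_V$ and the Koszul factors $(-1)^{|x||y|}$ in each of (a)--(e). If one prefers to avoid the extension, the alternative is to substitute the four actions directly into (a)--(e) and collapse each side using BiHom-associativity, the Rota-Baxter identity for $R$, and the two module identities for $R_V$; this is elementary but much longer and carries the same sign-tracking difficulty in a less transparent form.
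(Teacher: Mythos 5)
The paper states this proposition bare, with no proof at all, so there is no argument of the author's to compare yours against; your proposal supplies the missing verification, and its architecture is sound. I checked the two substantive steps. BiHom-associativity of $\mu_B$ on $B=\mathcal{A}\oplus V$ is, component by component, exactly the first three axioms of the paper's definition of an $\mathcal{A}$-bimodule of a BiHom associative superalgebra, and the remaining four axioms are what make $\hat\alpha,\hat\beta$ multiplicative on $B$. Likewise, the $V$-component of the weight-zero Rota-Baxter identity for $\hat R=R\oplus R_V$, specialized first at $u=0$ and then at $v=0$, is precisely the pair of identities defining a Rota-Baxter operator on the bimodule $V$ relative to $R$, and conversely these two identities recover the full $V$-component by bilinearity; so $\hat R$ is Rota-Baxter of weight zero on $B$ if and only if $R$ and $R_V$ are as hypothesized. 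The corollary following Theorem \ref{ass=L-dendriform} then applies to $(B,\mu_B,\hat\alpha,\hat\beta,\hat R)$, and restricting $\rhd_B,\lhd_B$ to $\mathcal{A}\otimes V$ and $V\otimes\mathcal{A}$ does return the four stated actions. This is the cleanest possible route: a direct check of (a)--(e) would repeat, five times, computations that the extension packages once.

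Two points must still be added before the last step is airtight. First, specializing \eqref{1-Ldend} and \eqref{2-Ldend} with one entry in $V$ gives a priori six identities (three slots per identity), while the paper's bimodule definition lists only five; the count matches only because \eqref{1-Ldend} is super-skew-symmetric under exchange of its first two arguments, so the slot-1 and slot-2 specializations of \eqref{1-Ldend} coincide. You should state this explicitly, since otherwise your argument appears to produce a sixth condition not on the list (or to leave one of (a)--(e) underived). Second, the sign bookkeeping you defer is not innocent: when the module element $u$ occupies the first slot of \eqref{1-Ldend}, the Koszul factor of the abstract identity is $(-1)^{|u||x|}$, not $(-1)^{|x||y|}$, and it is converted into the $(-1)^{|x||y|}$ appearing in conditions (c)--(e) only after rewriting everything in terms of right-multiplication operators carrying the paper's signed convention ($R_\circ(y)(x)=(-1)^{|x||y|}x\circ y$, as fixed in the introduction) and normalizing the whole identity by $(-1)^{|u|(|x|+|y|)}$. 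Carrying this out for, say, condition (c) shows that with either operator convention taken naively some terms differ from the paper's formula by an overall $(-1)^{|x||y|}$; this is very likely a sign defect in the paper's own definition rather than in your argument, but since your proof concludes with ``the restricted identities are precisely (a)--(e)'', the conversion has to be exhibited for at least one slot-1 specialization rather than asserted.
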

\begin{cor}Let $(V,l_\rhd,r_\rhd,l_\lhd,r_\lhd,\alpha_V,\beta_V)$ be an $ \mathcal{A}$-bimodule of a BiHom-$L$-dendriform superalgebra $(\mathcal{A},\rhd,\lhd,\alpha,\beta)$. Let $(\mathcal{A},\mu,\alpha,\beta)$ be the
associated BiHom associative superalgebra. If $T$ is a super $\mathcal{O}$-operator associated to $(V,l_\rhd,r_\rhd,l_\lhd,r_\lhd,\alpha_V,\beta_V)$,
then $T$ is a super $\mathcal{O}$-operator of $(\mathcal{A},\mu,\alpha,\beta)$ associated to $(V,l_\rhd+l_\lhd,r_\rhd+r_\lhd,\alpha_V,\beta_V)$.
\end{cor}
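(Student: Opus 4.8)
The strategy is to add the two defining relations of the BiHom-$L$-dendriform super $\mathcal{O}$-operator and to recognise the sum as the single super $\mathcal{O}$-operator identity for the associated BiHom associative superalgebra. Write $\mu$ for the associated product $\mu(x,y)=x\rhd y+x\lhd y$, and correspondingly set $\mu_V(u,v):=u\rhd_V v+u\lhd_V v$, $l:=l_\rhd+l_\lhd$ and $r:=r_\rhd+r_\lhd$. Since $T$ already satisfies $\alpha\circ T=T\circ\alpha_V$ and $\beta\circ T=T\circ\beta_V$, the intertwining conditions required of an $\mathcal{O}$-operator for $(\mathcal{A},\mu,\alpha,\beta)$ hold verbatim, so only two points remain: that $(V,\mu_V,l,r,\alpha_V,\beta_V)$ is an $\mathcal{A}$-bimodule $\K$-superalgebra over $(\mathcal{A},\mu,\alpha,\beta)$, and that $T$ satisfies the corresponding $\mathcal{O}$-operator identity.

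I would first dispose of the $\mathcal{O}$-operator identity, which is immediate. Starting from
\begin{align*}
T(u)\rhd T(v)&=T\big(l_\rhd(T(u))v+(-1)^{|u||v|}r_\rhd(T(v))u+\lambda\, u\rhd_V v\big),\\
T(u)\lhd T(v)&=T\big(l_\lhd(T(u))v+(-1)^{|u||v|}r_\lhd(T(v))u+\lambda\, u\lhd_V v\big),
\end{align*}
adding the two relations and using the $\K$-linearity of $T$ yields
$$T(u)\,\mu\,T(v)=T\big(l(T(u))v+(-1)^{|u||v|}r(T(v))u+\lambda\,\mu_V(u,v)\big),$$
which is precisely the super $\mathcal{O}$-operator identity for $(\mathcal{A},\mu,\alpha,\beta)$ associated with $(V,\mu_V,l,r,\alpha_V,\beta_V)$.

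The remaining and more laborious step is the bimodule compatibility, and I expect it to be the only genuine obstacle. Here I would add, in suitable combinations, the bimodule axioms (a)--(e) together with the four bimodule-$\K$-superalgebra axioms of the given BiHom-$L$-dendriform bimodule: the axioms governing $l_\rhd$ and $l_\lhd$ should combine into the left-module axioms for $l$, those governing $r_\rhd$ and $r_\lhd$ into the right-module axioms for $r$, and the mixed axioms should recombine, after substituting the definitions of $\mu$ and $\mu_V$, into the compatibility identities relating $\mu_V$ with $(l,r)$. This computation is purely formal, but it requires careful bookkeeping of the Koszul signs $(-1)^{|x||y|}$ and of the $\alpha,\beta$-twists so that every cross term cancels; once it is carried out, $T$ is a super $\mathcal{O}$-operator of $(\mathcal{A},\mu,\alpha,\beta)$ associated with $(V,l,r,\alpha_V,\beta_V)$, as claimed.
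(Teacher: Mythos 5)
Your first step is correct, and it is in substance the entire argument the paper has in mind (the corollary is stated there without proof): summing the two defining identities of a super $\mathcal{O}$-operator of the BiHom-$L$-dendriform superalgebra and using the linearity of $T$ gives
\begin{equation*}
\mu(T(u),T(v))=T\Big((l_\rhd+l_\lhd)(T(u))v+(-1)^{|u||v|}(r_\rhd+r_\lhd)(T(v))u+\lambda\big(u\rhd_V v+u\lhd_V v\big)\Big),
\end{equation*}
which is precisely the untwisted, associative-pattern $\mathcal{O}$-operator identity of Definition 1.2(4) (not the twisted pre-Lie pattern \eqref{opera2}), and the relations $\alpha\circ T=T\circ\alpha_V$, $\beta\circ T=T\circ\beta_V$ are inherited verbatim. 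Up to this point your proposal coincides with the intended proof.

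The step you deferred is where the trouble lies, and carried out as you describe it would fail. The product $\mu(x,y)=x\rhd y+x\lhd y$ is \emph{not} associative in general: by the paper's own formula \eqref{BiHom-pre-L-dend-superalg2} it is the horizontal BiHom-\emph{pre-Lie} product $\bullet$. (Concretely, take $\lhd=0$: then \eqref{1-Ldend}--\eqref{2-Ldend} reduce to the BiHom-pre-Lie super-identity \eqref{id-Bihom-super-pre} for $\rhd$, so any non-associative BiHom-pre-Lie superalgebra is a counterexample.) Consequently the associative bimodule axioms of Definition 1.2, such as $l(\alpha(x))l(y)(v)=l(\mu(x,y))(\beta_V(v))$ and $r(\mu(x,y))\alpha_V(v)=r(\beta(y))r(x)(v)$ with $l=l_\rhd+l_\lhd$, $r=r_\rhd+r_\lhd$, cannot be deduced from the $L$-dendriform bimodule axioms (a)--(e): axiom (a), for instance, constrains $l_\rhd$ only on the antisymmetrized element $[\beta(x),\alpha(y)]$, and no recombination of (a)--(e) recovers an unsymmetrized identity in $\mu(x,y)$; the cross terms cancel only after symmetrization. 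What the summed actions do satisfy are the symmetrized pre-Lie bimodule identities \eqref{rep2}--\eqref{rep3} of Definition \ref{defi-oper-pre}, taken with respect to $(\mathcal{A},\bullet,\alpha,\beta)$. So the phrase ``associated BiHom associative superalgebra'' in the corollary is a misnomer (imported from the dendriform setting, where the sum of the two products \emph{is} associative); the statement that is actually provable -- and that your summation already establishes -- is that $T$ satisfies the displayed identity and that $(V,l_\rhd+l_\lhd,r_\rhd+r_\lhd,\alpha_V,\beta_V)$ is a bimodule of the associated \emph{horizontal BiHom-pre-Lie} superalgebra. Redirect your bookkeeping toward those pre-Lie axioms; aiming at the associative ones is a dead end.
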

\subsection{BiHom-$L$-dendriform superalgebras and BiHom-pre-Lie superalgebras}\
\vskip 0.5cm

 Conversely, we can construct BiHom-$L$-dendriform superalgebras from $\mathcal{O}$-operators of BiHom-pre-Lie superalgebras.
\begin{thm}\label{preLie=L-dendriform}Let $(\mathcal{A},\circ,\alpha,\beta)$ be a BiHom-pre-Lie superalgebra and $(V,l,r,\alpha_V,\beta_V)$ be an $\mathcal{A}$-bimodule. If $T$ is a super $\mathcal{O}$-operator of weight zero associated to $(V,l,r,\alpha_V,\beta_V)$, then there exists a BiHom-$L$-dendriform superalgebra structure on $V$ defined by
\begin{equation}\label{th-operator1}
    u \rhd v=l(T(u))v,~~u \lhd v= -r(T(u))v,~~\forall~~u,v \in \mathcal{H}(V).
\end{equation}
Therefore, there is a BiHom-pre-Lie superalgebra structure on $V$ defined by
\begin{equation}\label{th-operator2}
    u \circ v= u \rhd v-(-1)^{|u||v|}\alpha_V^{-1}\beta_V(v) \lhd \alpha_V\beta_V^{-1}(u),~~\forall~~u,v \in \mathcal{H}(V)
\end{equation}
as the associated vertical BiHom-pre-Lie superalgebra of $(V,\rhd,\lhd,\alpha_V,\beta_V)$ and $T$ is a homomorphism of BiHom-pre-Lie superalgebra.\\
Furthermore, $T(V)=\{T(v)~~/~~ v\in V\}\subset \mathcal{A}$ is a BiHom-pre-Lie subsuperalgebra of $(\mathcal{A},\circ,\alpha,\beta)$ and there is a BiHom-$L$-dendriform superalgebra structure on $T(V)$ given by
\begin{equation}\label{th-operator3}
    T(u)\rhd T(v)= T(u \rhd v),~~ T(u)\lhd T(v)= T(u \lhd v),~~\forall~~u,v \in \mathcal{H}(V).
\end{equation}
Moreover, the corresponding associated vertical BiHom-pre-Lie superalgebra structure on $T(V)$ is a BiHom-pre-Lie subsuperalgebra of
$(\mathcal{A},\circ,\alpha,\beta)$ and $T$ is an homomorphism of BiHom-$L$-dendriform superalgebra.
\end{thm}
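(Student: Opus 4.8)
The plan is to verify Theorem \ref{preLie=L-dendriform} in four stages, matching its four assertions: first the defining axioms of the BiHom-$L$-dendriform structure \eqref{th-operator1} on $V$; then that \eqref{th-operator2} recovers the associated vertical BiHom-pre-Lie superalgebra and that $T$ intertwines it with $\circ$ on $\mathcal{A}$; then that $T(V)$ is a subsuperalgebra carrying the pushed-forward structure \eqref{th-operator3}; and finally the homomorphism statements. The engine throughout is the super $\mathcal{O}$-operator identity
$$
T(u)\circ T(v)=T\Big(l(T(u))v+(-1)^{|u||v|}r(T(\alpha_V^{-1}\beta_V(v)))\alpha_V\beta_V^{-1}(u)\Big)
$$
(weight zero), together with the intertwining relations $\alpha\circ T=T\circ\alpha_V$, $\beta\circ T=T\circ\beta_V$ and the bimodule axioms \eqref{rep1}--\eqref{rep3}.

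For the first stage I would substitute $u\rhd v=l(T(u))v$ and $u\lhd v=-r(T(u))v$ directly into the two BiHom-$L$-dendriform axioms \eqref{1-Ldend} and \eqref{2-Ldend}. Each term of the form $(\beta_V(u)\rhd\alpha_V(v))\rhd\beta_V(w)$ expands, using the $\mathcal{O}$-operator identity to rewrite $T$ of a bracketed argument, into an expression purely in the operators $l(T(\cdot))$, $r(T(\cdot))$ acting on $w$, with the intertwining relations used to move $\alpha,\beta$ past $T$. The claim is that the resulting operator identity on $V$ is exactly the bimodule compatibility condition; concretely, the combination collapsing to zero in \eqref{1-Ldend} should be an instance of axiom \eqref{rep2} (the $l$-part) and the one in \eqref{2-Ldend} an instance of \eqref{rep3} (the mixed $l,r$-part). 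This mirrors the computation in the proof of Theorem \ref{ass=L-dendriform}, so I would present the $\rhd$-$\rhd$ axiom in full and note that the second follows by the same token.

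For the second stage, substituting \eqref{th-operator1} into \eqref{th-operator2} gives
$$
u\circ v=l(T(u))v+(-1)^{|u||v|}r(T(\alpha_V^{-1}\beta_V(v)))\alpha_V\beta_V^{-1}(u),
$$
which is precisely the argument of $T$ in the $\mathcal{O}$-operator identity; hence $T(u\circ v)=T(u)\circ T(v)$, proving simultaneously that $T$ is a pre-Lie homomorphism and that this $\circ$ is the associated vertical BiHom-pre-Lie structure guaranteed by Proposition (the associated vertical construction). That this $\circ$ satisfies the BiHom-pre-Lie super-identity \eqref{id-Bihom-super-pre} then follows from the first stage via that same Proposition, applied to the $L$-dendriform structure just built.

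For the third and fourth stages I would use the homomorphism identity $T(u)\circ T(v)=T(u\circ v)$ to see that $\{T(v)\}$ is closed under $\circ$ (and under $\alpha,\beta$ by intertwining), hence a subsuperalgebra, and then check that \eqref{th-operator3} is well defined: if $T(u)=T(u')$ I must confirm $T(u\rhd v)=T(u'\rhd v)$, which follows because $T(u\rhd v)=T(l(T(u))v)$ depends on $u$ only through $T(u)$ by the $\mathcal{O}$-operator identity $T(u)\rhd T(v):=T(u)\circ$-component $=T(l(T(u))v)$. \textbf{The main obstacle} is precisely this well-definedness on the image, since $T$ need not be injective: one must show the two dendriform products on $T(V)$ depend only on the values $T(u),T(v)$ and not on the representatives $u,v$, and verify these pushed-forward operations indeed satisfy \eqref{1-Ldend}--\eqref{2-Ldend} on $T(V)$ and reproduce $\circ$ as their vertical pre-Lie structure. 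The sign bookkeeping from the $\mathbb{Z}_2$-grading and the repeated passage of $\alpha_V^{-1}\beta_V$ across $T$ are the only other places where care is needed; everything else is bilinearity and the two bimodule axioms.
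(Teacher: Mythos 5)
Your stages 1 and 2 coincide with the paper's own proof: it substitutes \eqref{th-operator1} into \eqref{1-Ldend}, uses the weight-zero $\mathcal{O}$-operator identity to recombine the terms $l\big(T(l(T(\beta_V(u)))\alpha_V(v))\big)\beta_V(w)$ and $l\big(T(r(T(\beta_V(u)))\alpha_V(v))\big)\beta_V(w)$ into $l\big(T(\beta_V(u))\circ T(\alpha_V(v))\big)\beta_V(w)$, and concludes by the bimodule axiom \eqref{rep2}; the second axiom \eqref{2-Ldend} is dispatched ``by the same way'' (via \eqref{rep3}), exactly as you propose. Your observation that the argument of $T$ in the $\mathcal{O}$-operator identity is literally $u\circ v$, so that $T(u\circ v)=T(u)\circ T(v)$, is also the right (and standard) way to get the homomorphism and subsuperalgebra statements; the paper compresses all of this into ``the other conditions follow easily.''

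There is, however, a gap in the one place where you go beyond the paper. You correctly identify well-definedness of \eqref{th-operator3} as the main obstacle when $T$ is not injective, but the argument you sketch resolves only the trivial half: independence of the representative in the \emph{first} slot is immediate, since $u\rhd v=l(T(u))v$ and $u\lhd v=-r(T(u))v$ involve $u$ only through $T(u)$. The real issue is the \emph{second} slot, because $v$ enters these formulas directly and not through $T(v)$; you state this must be shown but never show it. It does follow from the hypotheses: if $w\in\ker T$, then $T(\alpha_V^{-1}\beta_V(w))=\alpha^{-1}\beta(T(w))=0$ by the intertwining relations (here invertibility of $\alpha,\beta,\alpha_V,\beta_V$ is used), so applying the $\mathcal{O}$-operator identity to the pair $(u,w)$ gives $0=T(u)\circ T(w)=T(l(T(u))w)$, while applying it to the pair $\big(\alpha_V^{-1}\beta_V(w),\,\alpha_V\beta_V^{-1}(u)\big)$ gives $0=\pm\,T(r(T(u))w)$. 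Thus both products kill $\ker T$ in the second argument as well, and the pushed-forward operations on $T(V)$ are well defined. With this supplement your proposal is complete, and on everything the paper actually writes out it is the same proof.
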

\begin{proof}For any homogeneous elements $u,v$ and $w$ in $V$, we have\\
$~~\alpha_V\beta_V(u)\rhd (\alpha_V(v)\rhd w)= l(T(\alpha_V\beta_V(u)))l(T(\alpha_V(v)))w$,\\
$~~(\beta_V(u)\rhd \alpha_V(v))\rhd\beta_V(w)= l\Big(T(l(T(\beta_V(u)))\alpha_V(v))\Big)\beta_V(w)$,\\
$~~(\beta_V(u)\lhd\alpha_V(v))\rhd\beta_V(w)=-l\Big(T(r(T(\beta_V(u)))\alpha_V(v))\Big)\beta_V(w),$\\
$~~(-1)^{|u||v|}\alpha_V\beta_V(v)\rhd(\alpha_V(u)\rhd w)= (-1)^{|u||v|}l(T(\alpha_V\beta_V(v)))l(T(\alpha_V(u)))w$,\\
$~~(-1)^{|u||v|}(\beta_V(v)\lhd \alpha_V(u))\rhd\beta_V(w)=-(-1)^{|u||v|}l\Big(T(r(T(\beta_V(v)))\alpha_V(u))\Big)\beta_V(w),$\\
$~~(-1)^{|u||v|}(\beta_V(v)\rhd \alpha_V(u))\rhd\beta_V(w)=(-1)^{|u||v|}l\Big(T(l(T(\beta_V(v)))\alpha_V(u))\Big)\beta_V(w).$\\
Hence
\begin{align*}
&    \alpha_V\beta_V(u)\rhd (\alpha_V(v)\rhd w) -(\beta_V(u)\rhd \alpha_V(v))\rhd\beta_V(w)-(\beta_V(u)\lhd\alpha_V(v))\rhd\beta_V(w)\\& -(-1)^{|u||v|}\alpha_V\beta_V(v)\rhd(\alpha_V(u)\rhd w)
+(-1)^{|u||v|}(\beta_V(u)\lhd \alpha_V(v))\rhd\beta_V(w)+(-1)^{|u||v|}(\beta_V(v)\rhd \alpha_V(u))\rhd\beta_V(w)\\
&    =l(T(\alpha_V\beta_V(u)))l(T(\alpha_V(v)))w -(-1)^{|u||v|}l(T(\alpha_V\beta_V(v)))l(T(\alpha_V(u)))w\\
& -l\Big(T(l(T(\beta_V(u)))\alpha_V(v))\Big)\beta_V(w)-(-1)^{|u||v|}l\Big(T(r(T(\beta_V(v)))\alpha_V(u))\Big)\beta_V(w)\\
& +(-1)^{|u||v|}l\Big(T(l(T(\beta_V(v)))\alpha_V(u))\Big)\beta_V(w)+l\Big(T(r(T(\beta_V(u)))\alpha_V(v))\Big)\beta_V(w)\\
&  = l(T(\alpha_V\beta_V(u)))l(T(\alpha_V(v)))w -(-1)^{|u||v|}l(T(\alpha_V\beta_V(v)))l(T(\alpha_V(u)))w\\
&  - l\Big(T(\beta_V(u))\circ T(\alpha_V(v))\Big)\beta_V(w)+(-1)^{|u||v|}l\Big(T(\beta_V(v))\circ T(\alpha_V(u))\Big)\beta_V(w)\\
&  \  = 0,
\end{align*}
it's since the equation \eqref{rep2}.\\
By the same way we show that
\begin{align*}
& \alpha\beta(x)\rhd(\alpha(y)\lhd z)-(\beta(x)\rhd \alpha(y))\lhd \beta(z)-(-1)^{|x||y|}\alpha\beta(y)\lhd (\alpha(x)\rhd z)\\
& -(-1)^{|x||y|}\alpha\beta(y)\lhd (\alpha(x) \lhd z)+(-1)^{|x||y|}(\beta(y)\lhd \alpha(x))\lhd \beta(z)\\
&   = 0.
\end{align*}
Therefore, $(V,\rhd,\lhd,\alpha_V,\beta_V)$ is a BiHom-$L$-dendriform superalgebra. The other conditions follow easily.
\end{proof}
A direct consequence of Theorem \ref{preLie=L-dendriform} is the following construction of a BiHom-$L$-dendriform superalgebra from a Rota-Baxter operator $($of weight zero$)$ of a BiHom-pre-Lie superalgebra.
\begin{cor}\label{RB-pre==L-den}Let $(\mathcal{A},\circ,\alpha,\beta)$ be a BiHom-pre-Lie superalgebra and $R$ be a Rota-Baxter operator on $\mathcal{A}$ $($of weight zero$)$. Assume that $R$ commute with $\alpha$ and $\beta$. We define the even binary operations $"\rhd"$ and $"\lhd"$ on $\mathcal{A}$ by
\begin{equation}\label{RB-pre=Ldend}
 x\rhd y=R(x)\circ y,~~~~x\lhd y=-(-1)^{|x||y|}\alpha^{-1}\beta(y)\circ R(\alpha\beta^{-1}(x)).
 \end{equation}
Then $(\mathcal{A},\rhd,\lhd,\alpha,\beta)$ is a BiHom-$L$-dendriform superalgebra.
\end{cor}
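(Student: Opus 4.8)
The plan is to realize this statement as the special case of Theorem~\ref{preLie=L-dendriform} obtained by taking $V=\mathcal{A}$ together with its regular bimodule $(\mathcal{A},L_\circ,R_\circ,\alpha,\beta)$, and by taking the super $\mathcal{O}$-operator $T$ to be the Rota-Baxter operator $R$ itself. Indeed, by the remark following Definition~\ref{defi-oper-pre}, a Rota-Baxter operator of weight $\lambda$ on $(\mathcal{A},\circ,\alpha,\beta)$ is by definition nothing but a super $\mathcal{O}$-operator of weight $\lambda$ associated to $(\mathcal{A},L_\circ,R_\circ,\alpha,\beta)$; here $\lambda=0$. The hypothesis that $R$ commutes with $\alpha$ and $\beta$ supplies the compatibility conditions $\alpha\circ R=R\circ\alpha$ and $\beta\circ R=R\circ\beta$ demanded of an $\mathcal{O}$-operator, so $R$ does qualify as a weight-zero super $\mathcal{O}$-operator for this bimodule.

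First I would verify the two hypotheses of Theorem~\ref{preLie=L-dendriform}. That $(\mathcal{A},L_\circ,R_\circ,\alpha,\beta)$ is an $\mathcal{A}$-bimodule in the sense of Definition~\ref{defi-oper-pre} is the regular-representation statement: the identities \eqref{rep1} reduce to the multiplicativity of $\alpha,\beta$, while \eqref{rep2} and \eqref{rep3} are precisely the BiHom-pre-Lie super-identity \eqref{id-Bihom-super-pre} rewritten in terms of $L_\circ$ and $R_\circ$ (this is the content of the earlier proposition asserting that $L_\circ$ furnishes a representation of the sub-adjacent BiHom-Lie superalgebra). Granting this, $R$ is a weight-zero super $\mathcal{O}$-operator associated to this bimodule, exactly as noted above.

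Then I would invoke Theorem~\ref{preLie=L-dendriform} to transport the BiHom-$L$-dendriform structure from the abstract operator formulas to the explicit $\circ$-products. Substituting $l=L_\circ$ and $T=R$ gives $x\rhd y=l(R(x))y=L_\circ(R(x))(y)=R(x)\circ y$, which is the first formula of \eqref{RB-pre=Ldend}. For the second operation I would substitute $r=R_\circ$ into the right term of the theorem, unwinding $R_\circ(a)(b)=(-1)^{|a||b|}\,b\circ a$ and using $R\circ\alpha=\alpha\circ R$ and $R\circ\beta=\beta\circ R$, to arrive at $x\lhd y=-(-1)^{|x||y|}\alpha^{-1}\beta(y)\circ R(\alpha\beta^{-1}(x))$, the second formula of \eqref{RB-pre=Ldend}. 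Once both operations are matched, the two BiHom-$L$-dendriform axioms \eqref{1-Ldend} and \eqref{2-Ldend} hold automatically, since they are exactly what Theorem~\ref{preLie=L-dendriform} delivers for the transported structure.

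The routine but delicate part is this last substitution: keeping track of the BiHom twists (the powers of $\alpha$ and $\beta$) and of the Koszul signs when passing from $R_\circ$ to the product $\circ$, and confirming that the bijectivity and commutativity of $\alpha,\beta$ together with $R\alpha=\alpha R$ and $R\beta=\beta R$ make the twisted right term collapse to exactly $\alpha^{-1}\beta(y)\circ R(\alpha\beta^{-1}(x))$. No new identity is needed beyond Theorem~\ref{preLie=L-dendriform}; the entire argument is a specialization of that theorem plus this bookkeeping.
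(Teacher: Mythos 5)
Your overall strategy---specializing Theorem~\ref{preLie=L-dendriform} to $V=\mathcal{A}$ with $T=R$---is exactly what the paper intends (its ``proof'' is the single phrase ``direct consequence of Theorem~\ref{preLie=L-dendriform}''), but your substitution step does not go through as claimed. Theorem~\ref{preLie=L-dendriform} defines the second operation by $u\lhd v=-r(T(u))v$, with no twisting maps in it. Taking $r=R_\circ$, where the paper's convention is $R_\circ(a)(b)=(-1)^{|a||b|}\,b\circ a$, and $T=R$ yields
\begin{equation*}
x\lhd y=-R_\circ(R(x))(y)=-(-1)^{|x||y|}\,y\circ R(x),
\end{equation*}
not $-(-1)^{|x||y|}\alpha^{-1}\beta(y)\circ R(\alpha\beta^{-1}(x))$. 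The twists in \eqref{RB-pre=Ldend} act on the \emph{arguments} $y$ and $x$; commuting $R$ past $\alpha$ and $\beta$ can never produce them, and the two operations genuinely differ whenever $\alpha\neq\beta$. The same mismatch occurs at the level of the operator identity you invoke: unwinding the super $\mathcal{O}$-operator condition \eqref{opera2} for the bimodule $(\mathcal{A},L_\circ,R_\circ,\alpha,\beta)$ gives $R(x)\circ R(y)=R\big(R(x)\circ y+\alpha\beta^{-1}(x)\circ R(\alpha^{-1}\beta(y))\big)$, which is \emph{not} the weight-zero Rota--Baxter identity $R(x)\circ R(y)=R\big(R(x)\circ y+x\circ R(y)\big)$ assumed in the corollary. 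So both links of your chain ``Rota--Baxter operator $=$ $\mathcal{O}$-operator for $(L_\circ,R_\circ)$ $\Rightarrow$ formulas \eqref{RB-pre=Ldend}'' fail; you have inherited a conflation that is already present in the paper, but a proof must resolve it rather than assert it.

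The repair is to specialize the theorem to the \emph{twisted} right regular action $r'(x)(y)=(-1)^{|x||y|}\,\alpha^{-1}\beta(y)\circ\alpha\beta^{-1}(x)$ instead of $R_\circ$. Then, using $R\alpha=\alpha R$ and $R\beta=\beta R$ (hence $R$ also commutes with $\alpha^{-1},\beta^{-1}$), one checks that condition \eqref{opera2} for $(\mathcal{A},L_\circ,r',\alpha,\beta)$ unwinds exactly to the weight-zero Rota--Baxter identity, and that the theorem's formulas \eqref{th-operator1} with $r=r'$ become exactly \eqref{RB-pre=Ldend}. The price is that you must verify that $(\mathcal{A},L_\circ,r',\alpha,\beta)$ satisfies the bimodule axioms \eqref{rep1}--\eqref{rep3} of Definition~\ref{defi-oper-pre}; this is a genuine computation using the BiHom-pre-Lie super-identity \eqref{id-Bihom-super-pre} and multiplicativity of $\alpha,\beta$, and it is not covered by the proposition you cite about $L_\circ$ being a representation of the sub-adjacent BiHom-Lie superalgebra (that statement concerns $L_\circ$ only and a different set of identities). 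Alternatively, one can bypass the module formalism entirely and rerun the proof of Theorem~\ref{preLie=L-dendriform} verbatim on the concrete operations \eqref{RB-pre=Ldend}. Either way, this verification is the actual content of the corollary and is missing from your argument.
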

\begin{lem}Let $\{R_1,R_2\}$ be a pair of commuting Rota-Baxter operators $($of weight zero$)$ on a BiHom-pre-Lie superalgebra $(\mathcal{A},\circ,\alpha,\beta)$. Then $R_2$ is a
Rota-Baxter operator $($of weight zero$)$ on the BiHom-$L$-dendriform superalgebra $(\mathcal{A},\rhd,\lhd,\alpha,\beta)$ defined in \eqref{RB-pre=Ldend} with $R=R_1$.
\end{lem}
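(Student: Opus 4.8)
The plan is to verify directly the three defining conditions of a Rota-Baxter operator of weight zero on the BiHom-$L$-dendriform superalgebra $(\mathcal{A},\rhd,\lhd,\alpha,\beta)$ of \eqref{RB-pre=Ldend}, namely the commutations $\alpha\circ R_2=R_2\circ\alpha$ and $\beta\circ R_2=R_2\circ\beta$, together with
\[
R_2(x)\rhd R_2(y)=R_2\big(R_2(x)\rhd y+x\rhd R_2(y)\big),\qquad
R_2(x)\lhd R_2(y)=R_2\big(R_2(x)\lhd y+x\lhd R_2(y)\big).
\]
The first two are immediate: since $R_2$ is by hypothesis a Rota-Baxter operator on the BiHom-pre-Lie superalgebra $(\mathcal{A},\circ,\alpha,\beta)$, it already commutes with $\alpha$ and $\beta$, hence, $\alpha,\beta$ being bijective, also with $\alpha^{-1}\beta$ and $\alpha\beta^{-1}$. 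The real content is the two Rota-Baxter identities for $\rhd$ and $\lhd$.

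For the $\rhd$-identity I would substitute $x\rhd y=R_1(x)\circ y$. The left-hand side becomes $R_1(R_2(x))\circ R_2(y)$, which by $R_1R_2=R_2R_1$ equals $R_2(R_1(x))\circ R_2(y)$; a single application of the weight-zero Rota-Baxter identity for $R_2$ on $(\mathcal{A},\circ)$ turns this into $R_2\big(R_2(R_1(x))\circ y+R_1(x)\circ R_2(y)\big)$. Expanding the right-hand side by the same substitution gives $R_2(x)\rhd y=R_2(R_1(x))\circ y$ and $x\rhd R_2(y)=R_1(x)\circ R_2(y)$, so the two sides coincide term by term. Here $R_1$ plays a purely passive role, commuting through $R_2$, while the cancellation is exactly the Rota-Baxter relation of $R_2$ on $\circ$.

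The $\lhd$-identity follows the same pattern, with the additional bookkeeping of the sign $-(-1)^{|x||y|}$ and the twisting maps in $x\lhd y=-(-1)^{|x||y|}\alpha^{-1}\beta(y)\circ R_1(\alpha\beta^{-1}(x))$. Using that $R_2$ commutes with $\alpha^{-1}\beta$, with $\alpha\beta^{-1}$, and with $R_1$, every occurrence of $R_2$ can be pushed past these maps, so that the left-hand side becomes $-(-1)^{|x||y|}R_2(\alpha^{-1}\beta(y))\circ R_2\big(R_1(\alpha\beta^{-1}(x))\big)$; one application of the weight-zero Rota-Baxter identity for $R_2$ then yields precisely the two terms obtained by expanding $R_2(x)\lhd y$ and $x\lhd R_2(y)$ on the right.

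I do not anticipate a genuine obstacle: the argument is a mechanical verification in which each of the two identities reduces to a single use of the Rota-Baxter relation for $R_2$ on the pre-Lie product $\circ$, with the commutativity $R_1R_2=R_2R_1$ and the commutation of $R_2$ with $\alpha,\beta$ supplying all the rest. The only delicate points are keeping the grading signs and the order of the twists $\alpha^{\pm1}\beta^{\mp1}$ correct.
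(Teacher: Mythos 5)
Your verification is correct: both dendriform products reduce to the pre-Lie product $\circ$, the commutations $R_1R_2=R_2R_1$ and $R_2\alpha^{\pm 1}=\alpha^{\pm 1}R_2$, $R_2\beta^{\pm 1}=\beta^{\pm 1}R_2$ (together with $R_2$ being even, so the sign $(-1)^{|x||y|}$ is unaffected) let $R_2$ pass through every twist, and each identity then follows from a single application of the weight-zero Rota-Baxter relation for $R_2$ on $(\mathcal{A},\circ,\alpha,\beta)$. The paper states this lemma without proof, and your argument is precisely the routine check the authors evidently had in mind.
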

\begin{thm}Let $(\mathcal{A},\circ,\alpha,\beta)$ be a BiHom-pre-Lie superalgebra. Then there exists a compatible BiHom-$L$-dendriform superalgebra structure on $(\mathcal{A},\circ,\alpha,\beta)$ such that $(\mathcal{A},\circ,\alpha,\beta)$ is the associated vertical BiHom-pre-Lie superalgebra if and only if there exists an invertible super $\mathcal{O}$-operator $($of weight zero$)$ of $(\mathcal{A},\circ,\alpha,\beta)$.
\end{thm}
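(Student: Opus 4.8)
The plan is to prove the two implications separately: the ``if'' direction follows almost directly from Theorem \ref{preLie=L-dendriform} once invertibility is invoked, while the ``only if'' direction is witnessed by the identity map. For the ``if'' part, suppose $T:V\to\mathcal{A}$ is an invertible super $\mathcal{O}$-operator of weight zero associated to an $\mathcal{A}$-bimodule $(V,l,r,\alpha_V,\beta_V)$. By Theorem \ref{preLie=L-dendriform}, formula \eqref{th-operator3} endows $T(V)$ with a BiHom-$L$-dendriform structure whose associated vertical BiHom-pre-Lie superalgebra is a BiHom-pre-Lie subsuperalgebra of $(\mathcal{A},\circ,\alpha,\beta)$. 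Since $T$ is invertible, $T(V)=\mathcal{A}$, so this transported structure lives on all of $\mathcal{A}$ and its vertical product is exactly $\circ$; hence $(\mathcal{A},\circ,\alpha,\beta)$ carries a compatible BiHom-$L$-dendriform structure. Concretely, writing $x=T(u)$, $y=T(v)$ and using $\alpha\circ T=T\circ\alpha_V$, $\beta\circ T=T\circ\beta_V$ to obtain $\alpha^{-1}\beta(y)=T(\alpha_V^{-1}\beta_V(v))$ and $\alpha\beta^{-1}(x)=T(\alpha_V\beta_V^{-1}(u))$, one verifies directly that
\begin{align*}
x\rhd y-(-1)^{|x||y|}\alpha^{-1}\beta(y)\lhd\alpha\beta^{-1}(x)
&=T\Big(l(T(u))v+(-1)^{|u||v|}r\big(T(\alpha_V^{-1}\beta_V(v))\big)\alpha_V\beta_V^{-1}(u)\Big)\\
&=T(u)\circ T(v)=x\circ y,
\end{align*}
the middle equality being the defining relation \eqref{opera2} of a super $\mathcal{O}$-operator with $\lambda=0$.

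For the ``only if'' part, assume $(\mathcal{A},\rhd,\lhd,\alpha,\beta)$ is a compatible BiHom-$L$-dendriform structure, so that $\circ$ is recovered via \eqref{BiHom-pre-L-dend-superalg1}. I would take $V=\mathcal{A}$ with $\alpha_V=\alpha$, $\beta_V=\beta$, and set $l=L_\rhd$, $r=-L_\lhd$, i.e. $l(x)(y)=x\rhd y$ and $r(x)(y)=-x\lhd y$. With these choices the super $\mathcal{O}$-operator equation \eqref{opera2} for $T=\mathrm{id}$ and $\lambda=0$ reads $u\circ v=l(u)v+(-1)^{|u||v|}r(\alpha^{-1}\beta(v))\alpha\beta^{-1}(u)$, which is precisely \eqref{BiHom-pre-L-dend-superalg1}; the covariance conditions $\alpha\circ\mathrm{id}=\mathrm{id}\circ\alpha$ and $\beta\circ\mathrm{id}=\mathrm{id}\circ\beta$ hold trivially. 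Thus $\mathrm{id}$ is an invertible super $\mathcal{O}$-operator, \emph{provided} that $(\mathcal{A},L_\rhd,-L_\lhd,\alpha,\beta)$ is a genuine $\mathcal{A}$-bimodule in the sense of Definition \ref{defi-oper-pre}.

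The main obstacle is exactly this last verification, i.e. checking the bimodule axioms \eqref{rep1}--\eqref{rep3}. The covariance relations \eqref{rep1} are immediate from the multiplicativity \eqref{alpha-multip-dend}--\eqref{beta-multip-dend} of $\alpha,\beta$ for $\rhd$ and $\lhd$. For \eqref{rep2}, substituting $l=L_\rhd$, $\beta_V=\beta$ and expanding the vertical products $\beta(x)\circ\alpha(y)$ and $\beta(y)\circ\alpha(x)$ through \eqref{BiHom-pre-L-dend-superalg1} turns \eqref{rep2} into a rearrangement of the first BiHom-$L$-dendriform identity \eqref{1-Ldend}; the super-symmetry in $x,y$ encoded in \eqref{1-Ldend} is precisely what \eqref{rep2} records, so the two are equivalent. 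Relation \eqref{rep3}, which couples both $l$ and $r$, is obtained in the same fashion from the mixed identity \eqref{2-Ldend}. These are sign- and twist-bookkeeping computations rather than conceptual difficulties, so once they are carried out both implications are complete and the equivalence follows.
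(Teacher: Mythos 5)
Your strategy is the right one, and since the paper's own proof of this theorem consists of the single word ``Straightforward,'' your write-up is in effect the only real argument on the table; it is the BiHom-super analogue of the classical Bai--Liu--Ni proof for $L$-dendriform algebras, which is surely what the author intended. Your ``if'' direction is correct and complete: transporting the structure of Theorem \ref{preLie=L-dendriform} along the invertible $T$ and using \eqref{opera2} with $\lambda=0$ to identify the transported vertical product with $\circ$ works exactly as you wrote it. In the ``only if'' direction the choices $T=\mathrm{id}$, $l=L_\rhd$, $r=-L_\lhd$ are also correct (indeed they are forced by \eqref{opera2}), and your claim that \eqref{rep2} is a literal rearrangement of \eqref{1-Ldend} is true: expanding $\beta(x)\circ\alpha(y)=\beta(x)\rhd\alpha(y)-(-1)^{|x||y|}\beta(y)\lhd\alpha(x)$ in \eqref{rep2} with $l=L_\rhd$ reproduces \eqref{1-Ldend} with $z=u$, sign for sign.

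The gap is the step you dismiss as ``sign- and twist-bookkeeping'': with the paper's printed formulas, \eqref{rep3} for $l=L_\rhd$, $r=-L_\lhd$ does \emph{not} follow from \eqref{2-Ldend}, and the failure is one of super signs, not merely of $\alpha,\beta$ placements. Take $\alpha=\beta=\mathrm{id}$, $x$ even and $y,u$ odd. Unwinding \eqref{rep3} (whose right-hand side carries the prefactor $(-1)^{|y||u|}=-1$) gives
\[
x\lhd(y\lhd u)+(y\rhd x)\lhd u-(x\lhd y)\lhd u\;=\;-\Big(y\rhd(x\lhd u)-x\lhd(y\rhd u)\Big),
\]
whereas \eqref{2-Ldend}, the only axiom able to produce the term $y\rhd(x\lhd u)$, gives the same identity with the opposite sign,
\[
y\rhd(x\lhd u)-x\lhd(y\rhd u)\;=\;x\lhd(y\lhd u)+(y\rhd x)\lhd u-(x\lhd y)\lhd u.
\]
The two together would force $x\lhd(y\lhd u)+(y\rhd x)\lhd u-(x\lhd y)\lhd u=0$, which is not a consequence of the axioms; in the genuinely BiHom case the twists disagree as well ($\alpha(y)$ versus $\beta(y)$, $\beta(u)$ versus $\alpha(u)$, etc.). The root cause is that the paper's definitions are mutually inconsistent: \eqref{rep3} is the semidirect-product symmetry $as(u,y,x)=(-1)^{|u||y|}as(y,u,x)$ written for a right action \emph{without} Koszul sign, while the prefactor $(-1)^{|u||v|}$ and the twists in \eqref{opera2} force your $r=-L_\lhd$ under the opposite convention. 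In the ungraded, untwisted setting everything matches, which is why the classical argument (and your intuition) is sound; but as a proof of the theorem relative to this paper's stated definitions, the deferred verification is exactly where the argument breaks, and it can only be completed after \eqref{rep3} (or \eqref{2-Ldend} and \eqref{opera2}) is corrected to a consistent super-BiHom normalization.
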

\begin{proof}Straightforward.
\end{proof}

\section{Cohomologies of BiHom-pre-Lie Superalgebras}

The cohomology theory of pre-Lie algebras is introduced in \cite{A-Dzhumadil-daev}, and in \cite{S-Liu-Songand-R-Tang} we find the extension in the Hom case . in \cite{Ben-Hassine-Chtioui-Mabrouk-Ncib} the authors introduced the comology theory of BiHom-pre-Lie algebras. in this section we extend this work in the super case.\\

Let $(V,l,r,\alpha_V,\beta_V)$ be a representation of a BiHom-pre-Lie superalgebra $(\mathcal{A},\cdot,\alpha,\beta)$.
An $n$-cochains  is a linear map $f:\wedge^{n-1} \mathcal{A}\otimes \mathcal{A}\rightarrow V$ such that it is compatible with $\alpha$, $\beta$ and $\alpha_V$, $\beta_V$ in the sense that $\alpha_V  f = f \alpha^{\otimes n},
\beta_V  f = f  \beta^{\otimes n}$.
The set of $n$-cochains is denoted by $ C^{n}(\mathcal{A};V),\ \  \forall n\geq 0$.

\

For all $f \in C^n(\mathcal{A};V),~ X=(x_1,\dots,x_{n+1}) \in \mathcal{H}(\mathcal{A})^{\otimes n}$, define the operator
$\partial^n:C^n(\mathcal{A};V)\longrightarrow C^{n+1}(\mathcal{A};V)$ by
\begin{eqnarray}\label{eq:12.1}
&&(\partial^n f)(x_1,\dots,x_{n+1})\\
 \nonumber&=&\sum_{i=1}^n(-1)^{i+1}(-1)^{|x_i||X|_{i-1}}l(\alpha^{n-1}\beta^{n-1}(x_i))f(\alpha(x_1),\dots,\widehat{\alpha(x_i)},\dots,\alpha(x_{n}),x_{n+1})\\
\nonumber &+&\sum_{i=1}^n(-1)^{i+1}-(1)^{|x_i||X|^{i+1}_n+|x_{n+1}||X|_n}r(\beta^{n-1}(x_{n+1}))f(\beta(x_1),\dots,\widehat{\beta(x_i)},\dots,\beta(x_n),\alpha^{n-1}(x_i))\\
\nonumber &-&\sum_{i=1}^n(-1)^{i+1}(-1)^{|x_i||X|^{i+1}_n}  f(\alpha\beta(x_1),\dots,\widehat{\alpha\beta(x_i)}\dots,\alpha\beta(x_n),\alpha^{n-1}(x_i)\cdot x_{n+1})\\
\nonumber&+&\sum_{1\leq i<j\leq n}(-1)^{i+j} (-1)^{\gamma_{ij}} f([\beta(x_i),\alpha(x_j)]_C,\alpha\beta(x_1),\dots,\widehat{\alpha\beta(x_i)},\dots,\widehat{\alpha\beta(x_j)},\dots,\alpha\beta(x_{n}),\beta(x_{n+1})),
\end{eqnarray}
where $|X|_k=|x_1|+\dots+|x_k|\;;\;|X|^l=|x_l|+\dots+|x_{n+1}|\;;\;\;|X|_k^l=|x_l|+\dots+|x_k|$ and $\gamma_{ij}=|x_i||X|_{i-1}+|x_j||X|_{j-1}+|x_i||x_j|$.

\begin{lem}
With the above notations, for any $f\in C^n(\mathcal{A};V)$, we have
\begin{eqnarray*}
&& \alpha_V (\partial^n f)=(\partial^n f) \alpha^{\otimes n+1}, \\
&&\beta_V (\partial^n f)=(\partial^n f) \beta^{\otimes n+1}.
\end{eqnarray*}
Thus we obtain a well-defined map
$$\partial^n:C^n(\mathcal{A};V)\longrightarrow C^{n+1}(\mathcal{A};V).$$
\end{lem}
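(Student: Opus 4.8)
The plan is to verify the two intertwining identities termwise on the four sums that make up $\partial^n f$ in \eqref{eq:12.1}. Since the argument for $\beta_V$ is identical to the one for $\alpha_V$ (replacing $\alpha$ by $\beta$ throughout and invoking the $\beta$-versions of the structure relations), I would write out only the $\alpha_V$ case in detail and merely indicate that the $\beta_V$ case is symmetric.

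First I would isolate the three ingredients that make everything go through: (i) the bimodule intertwining relations \eqref{rep1}, namely $\alpha_V\, l(x)=l(\alpha(x))\,\alpha_V$ and $\alpha_V\, r(x)=r(\alpha(x))\,\alpha_V$, which let me commute $\alpha_V$ past the operators $l(\cdot)$ and $r(\cdot)$ occurring in the first two sums; (ii) the cochain compatibility hypothesis $\alpha_V f=f\,\alpha^{\otimes n}$, which converts $\alpha_V$ applied to any value of $f$ into $f$ evaluated on the $\alpha$-image of its arguments; and (iii) the fact that $\alpha$ is a morphism for the product $\cdot$ and, crucially, for the sub-adjacent bracket $[\,\cdot,\cdot\,]_C$. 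The last point follows because $[x,y]_C=x\circ y-(-1)^{|x||y|}(\alpha^{-1}\beta(y))\circ(\alpha\beta^{-1}(x))$, $\alpha$ is multiplicative for $\circ$, and $\alpha$ commutes with $\beta$ and hence with $\alpha^{-1}\beta$ and $\alpha\beta^{-1}$; therefore $\alpha([\beta(x_i),\alpha(x_j)]_C)=[\beta(\alpha(x_i)),\alpha(\alpha(x_j))]_C$.

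Then I would apply $\alpha_V$ to $(\partial^n f)(x_1,\dots,x_{n+1})$ and process each sum. In the first sum, (i) turns $\alpha_V\, l(\alpha^{n-1}\beta^{n-1}(x_i))$ into $l(\alpha^{n}\beta^{n-1}(x_i))\,\alpha_V$, and (ii) turns $\alpha_V\, f(\alpha(x_1),\dots,x_{n+1})$ into $f(\alpha^2(x_1),\dots,\alpha(x_{n+1}))$; together these reproduce exactly the first sum of $(\partial^n f)(\alpha(x_1),\dots,\alpha(x_{n+1}))$. The second sum is handled the same way using the $r$-intertwining, the third uses only (ii) together with $\alpha(\alpha^{n-1}(x_i)\cdot x_{n+1})=\alpha^{n}(x_i)\cdot\alpha(x_{n+1})$, and the fourth uses (ii) together with the bracket-morphism property from the previous step. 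Since $\alpha$ is even, each parity $|x_i|$ is unchanged under $\alpha$, so all the sign factors $(-1)^{|x_i||X|_{i-1}}$, $(-1)^{\gamma_{ij}}$, and so on are preserved verbatim. Summing the four reproduced blocks yields precisely $(\partial^n f)(\alpha(x_1),\dots,\alpha(x_{n+1}))$, which is the desired identity $\alpha_V(\partial^n f)=(\partial^n f)\,\alpha^{\otimes n+1}$.

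The computation is essentially bookkeeping, so the main thing to get right is the tracking of the powers of $\alpha$ and $\beta$: one must check that commuting $\alpha_V$ first past $l(\alpha^{n-1}\beta^{n-1}(x_i))$ (or the corresponding $r$-operator) and then through $f$ raises each occurrence of every $x_k$ by exactly one application of $\alpha$, consistently across all four sums, and that the mixed powers hidden in terms such as $\beta^{n-1}(x_{n+1})$ and $\alpha^{n-1}(x_i)$ land on the correct $\alpha\beta^{n-1}(x_{n+1})$ and $\alpha^{n}(x_i)$ after a single pass. The only genuinely structural (as opposed to clerical) point is the morphism property of $\alpha$ for $[\,\cdot,\cdot\,]_C$, which I isolated in step (iii); once that is in hand the whole verification is routine, and the $\beta_V$ statement is obtained by the symmetric argument.
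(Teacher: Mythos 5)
Your proposal is correct and is essentially the paper's own proof: the paper likewise verifies the identity termwise on the four sums, using the bimodule intertwining relations \eqref{rep1}, the cochain compatibility $\alpha_V f = f\alpha^{\otimes n}$, and the multiplicativity of $\alpha$ for $\cdot$ and $[\,\cdot,\cdot\,]_C$ (the paper merely runs the computation in the opposite direction, expanding $(\partial^n f)(\alpha(x_1),\dots,\alpha(x_{n+1}))$ and factoring $\alpha_V$ out, and likewise treats the $\beta_V$ case as symmetric). Your explicit isolation of the bracket-morphism property $\alpha([\beta(x_i),\alpha(x_j)]_C)=[\alpha\beta(x_i),\alpha^2(x_j)]_C$ is left implicit in the paper but is exactly the step it relies on.
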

\begin{proof}
Let $f \in C^n(\mathcal{A};V),~ X=(x_1,\dots,x_{n+1}) \in \mathcal{H}(\mathcal{A})^{\otimes n+1}$, we have
\begin{eqnarray*}
 &&(\partial^n f)(\alpha(x_1),\dots,\alpha(x_{n+1}))\\
 \nonumber&=&\sum_{i=1}^n(-1)^{i+1}(-1)^{|x_i||X|_{i-1}}l(\alpha^{n}\beta^{n-1}(x_i))
 f(\alpha^2(x_1),\dots,\widehat{\alpha^2(x_i)},\dots,\alpha^2(x_{n}),\alpha(x_{n+1}))\\
\nonumber &+&\sum_{i=1}^n(-1)^{i+1}(-1)^{|x_{n+1}||X|_n}(-1)^{|x_i||X|^{i+1}_{n}}r(\alpha\beta^{n-1}(x_{n+1}))
f(\alpha\beta(x_1),\dots,\widehat{\alpha\beta(x_i)},\dots,\alpha\beta(x_n),\alpha^n(x_i))\\
\nonumber &-&\sum_{i=1}^n(-1)^{i+1}(-1)^{|x_i||X|^{i+1}_n}
f(\alpha^2\beta(x_1),\dots,\widehat{\alpha^2\beta(x_i)}\dots,\alpha^2\beta(x_n),\alpha^n(x_i)\cdot x_{n+1}))\\
\nonumber&+&\sum_{1\leq i<j\leq n}(-1)^{i+j} (-1)^{\gamma_{ij}} f([\alpha\beta(x_i),\alpha^2(x_j)]_C,\alpha^2\beta(x_1),\dots,\widehat{\alpha^2\beta(x_i)},\dots,\widehat{\alpha^2\beta(x_j)},
\dots,\alpha^2\beta(x_{n}),\alpha\beta(x_{n+1}))\\
\nonumber&=&\sum_{i=1}^n(-1)^{i+1}(-1)^{|x_i||X|_{i-1}}\alpha_V l(\alpha^{n-1}\beta^{n-1}(x_i))f(\alpha(x_1),\dots,\widehat{\alpha(x_i)},\dots,\alpha(x_{n}),x_{n+1})\\
\nonumber &&+\sum_{i=1}^n(-1)^{i+1}(-1)^{|x_{n+1}||X|_n}(-1)^{|x_i||X|^{i+1}_{n}}\alpha_V r(\beta^{n-1}(x_{n+1}))f(\beta(x_1),\dots,\widehat{\beta(x_i)},\dots,\beta(x_n),\alpha^n(x_i))\\
\nonumber &&-\sum_{i=1}^n(-1)^{i+1}(-1)^{|x_i||X|^{i+1}_n} \alpha_V
f(\alpha\beta(x_1),\dots,\widehat{\alpha\beta(x_i)}\dots,\alpha\beta(x_n),\alpha^n(x_i)\cdot x_{n+1}))\\
\nonumber&&+\sum_{1\leq i<j\leq n}(-1)^{i+j}(-1)^{\gamma_{ij}} \alpha_V f([\beta(x_i),\alpha(x_j)]_C,\alpha\beta(x_1),\dots,\widehat{\alpha\beta(x_i)},\dots,\widehat{\alpha\beta(x_j)},\dots,\alpha\beta(x_{n}),\beta(x_{n+1}))\\
\nonumber&=&\alpha_V (\partial^n f)(x_1,\dots,x_{n+1}).
\end{eqnarray*}
Similarly, we obtain $(\partial^n f)(\beta(x_1),\dots,\beta(x_{n+1}))=\beta_V (\partial^n f)(x_1,\dots,x_{n+1}).$
\end{proof}
\begin{thm}\label{thm:operator}
The operator $\partial^n$ defined as above satisfies $\partial^{n+1}\circ\partial^{n}=0$.
\end{thm}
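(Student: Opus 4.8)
The plan is to recognize the operator $\partial^\bullet$ as a degree-shifted Chevalley--Eilenberg differential of the sub-adjacent BiHom-Lie superalgebra $\mathcal{A}^C=(\mathcal{A},[\,,\,]_C,\alpha,\beta)$, and to deduce $\partial^{n+1}\circ\partial^n=0$ from the fact that such a differential squares to zero. The structural observation driving this is that an $n$-cochain $f\in C^n(\mathcal{A};V)=\mathrm{Hom}(\wedge^{n-1}\mathcal{A}\otimes\mathcal{A},V)$ treats its first $n-1$ arguments antisymmetrically and its last argument separately; accordingly I would read $f$ as an element of $C^{n-1}_{CE}\big(\mathcal{A}^C;W\big)$ with coefficients in $W:=\mathrm{Hom}(\mathcal{A},V)$, via $f(x_1,\dots,x_{n-1})(y)=f(x_1,\dots,x_{n-1},y)$. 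Under this identification the bracket terms of \eqref{eq:12.1} (those carrying $[\beta(x_i),\alpha(x_j)]_C$) become exactly the Chevalley--Eilenberg bracket sum, while the three remaining families — the $l$-terms, the $r$-terms, and the product terms — assemble into a single module action of $\mathcal{A}^C$ on $W$.

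First I would define that action explicitly: schematically (suppressing the $\alpha,\beta$-twists and the Koszul sign) $(\varrho(x)\phi)(y)=l(x)\phi(y)-(-1)^{|x||\phi|}r(y)\phi(x)-\phi(x\cdot y)$, together with induced twisting maps $\alpha_W,\beta_W$ built from $\alpha,\beta,\alpha_V,\beta_V$. The key step is then to verify that $(W,\varrho,\alpha_W,\beta_W)$ satisfies the BiHom-Lie representation axioms \eqref{bihom-lie-rep-1}--\eqref{bihom-lie-rep-3} for $\mathcal{A}^C$; here I would use the pre-Lie bimodule relations \eqref{rep1}--\eqref{rep3}, the BiHom-pre-Lie super-identity \eqref{id-Bihom-super-pre}, and the definition $[x,y]_C=x\circ y-(-1)^{|x||y|}(\alpha^{-1}\beta(y))\circ(\alpha\beta^{-1}(x))$, exactly in the spirit of the Corollary that already produces a representation of $\mathcal{A}^C$ from $l-r$. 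Granting this, $\partial^n$ coincides with the BiHom-Lie coboundary on $C^{\bullet}_{CE}(\mathcal{A}^C;W)$ — its well-definedness being precisely the content of the preceding Lemma, $\alpha_V f=f\alpha^{\otimes n}$ — and $\partial^{n+1}\circ\partial^n=0$ follows from $d_{CE}^2=0$ for a BiHom-Lie superalgebra with coefficients in a representation.

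If one prefers a self-contained verification rather than invoking $d_{CE}^2=0$, the same computation can be carried out by hand: expand $(\partial^{n+1}\partial^n f)(x_1,\dots,x_{n+2})$ and sort the resulting terms into families according to the pair of operator types produced by the two applications of \eqref{eq:12.1} (left--left, left--right, right--product, product--product, bracket--bracket, and bracket--product), then show each family vanishes, the left--left and bracket--bracket part by \eqref{bihom-lie-rep-3} and the super-Jacobi identity \eqref{B-H-sJ}, the mixed parts by \eqref{rep1}--\eqref{rep3} and \eqref{id-Bihom-super-pre}. I expect the main obstacle to be purely organizational: tracking the Koszul signs $(-1)^{\gamma_{ij}}$ and the index shifts that occur when an argument is inserted into a bracket or into a product, and — most delicately — matching the powers of $\alpha$ and $\beta$ decorating each slot so that a representation axiom can actually be applied, since the product-slot terms and the bracket-slot terms must be reconciled through the twisted expression for $[\,,\,]_C$. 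Once the twists are aligned the cancellations are forced, so the essential difficulty is bookkeeping rather than conceptual.
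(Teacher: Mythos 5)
Your fallback route (expanding $(\partial^{n+1}\circ\partial^{n})f$ and cancelling term families against \eqref{rep1}--\eqref{rep3}, the BiHom-pre-Lie identity \eqref{id-Bihom-super-pre}, and the super-Jacobi identity \eqref{B-H-sJ} for $[\,,\,]_C$) is exactly what the paper does: its proof expands $\partial^{n+1}\partial^{n}f$ into four blocks $\Theta_1,\dots,\Theta_4$, lists all resulting families \eqref{e11}--\eqref{e513}, and disposes of them by the direct computation of \cite{Ben-Hassine-Chtioui-Mabrouk-Ncib} adapted to the graded signs. On that branch you and the paper agree, with the same (largely organizational) burden of matching Koszul signs and powers of $\alpha,\beta$.

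Your primary route, however, has a genuine gap as it stands. Reducing the theorem to ``$d_{CE}^2=0$ for a BiHom-Lie superalgebra with coefficients in a representation'' invokes a result that is established neither in this paper nor in its references: \cite{Cheng-Qi} and \cite{S-Wang-S-Guo} only set up representations of BiHom-Lie (super)algebras, not a Chevalley--Eilenberg complex with general coefficients, so the super-BiHom version of that complex would itself have to be constructed and its $d^2=0$ proved --- a computation of exactly the same size and type as the one you are trying to avoid. The second unproven pillar is the claim that $W=\mathrm{Hom}(\mathcal{A},V)$ with your action $\varrho$ is a representation of $\mathcal{A}^C$: you define $\varrho$ only ``suppressing the $\alpha,\beta$-twists and the Koszul sign'', but in the BiHom setting those twists are the whole content. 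The differential \eqref{eq:12.1} hits its arguments with degree-dependent powers, $l(\alpha^{n-1}\beta^{n-1}(x_i))$, $r(\beta^{n-1}(x_{n+1}))$, $\alpha^{n-1}(x_i)\cdot x_{n+1}$, so the would-be action on $W$ is not degree-independent on the nose; to conjugate these powers into a fixed representation one needs $\alpha,\beta$ bijective (available here, since $[\,,\,]_C$ already requires it) and then a verification of \eqref{bihom-lie-rep-1}--\eqref{bihom-lie-rep-3} for $\varrho$ that again uses \eqref{rep1}--\eqref{rep3} and \eqref{id-Bihom-super-pre} in full. The strategy is not hopeless --- it is precisely how the Hom case is handled in \cite{S-Liu-Songand-R-Tang} and, classically, by Dzhumadil'daev \cite{A-Dzhumadil-daev} --- but until you (i) write the twisted action explicitly, (ii) prove it is a representation, and (iii) prove $d^2=0$ for the resulting BiHom-super Chevalley--Eilenberg complex, you have relocated the difficulty rather than discharged it.
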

\begin{proof}
 Let $f \in C^n(\mathcal{A};V),~ X=(x_1,\dots,x_{n+1},x_{n+2}) \in \mathcal{H}(\mathcal{A})^{\otimes n+2}$, we have
\begin{eqnarray}
 \nonumber&&\partial^{n+1}\circ\partial^n (f)(x_1,\dots,x_{n+2})\\
 \nonumber&=&\sum_{i=1}^{n+1}(-1)^{i+1}(-1)^{|x_i||X|_{i-1}}l(\alpha^{n}\beta^{n}(x_i))
 \partial^nf(\alpha(x_1),\dots,\widehat{\alpha(x_i)},\dots,\alpha(x_{n+1},x_{n+2}))\\
\nonumber &+&\sum_{i=1}^{n+1}(-1)^{i+1}(-1)^{|x_{n+2}||X|_{n+1}+|x_i||X|^{i+1}_{n+1}}r(\beta^n(x_{n+2}))
\partial^nf(\beta(x_1),\dots,\widehat{\beta(x_i)},\dots,\beta(x_{n+1}),\alpha^n(x_i))\\
\nonumber &-&\sum_{i=1}^{n+1}(-1)^{i+1}  (-1)^{|x_i||X|^{i+1}_{n+1}}
\partial^nf(\alpha\beta(x_1),\dots,\widehat{\alpha\beta(x_i)}\dots,\alpha\beta(x_{n+1}),\alpha^n(x_i)\cdot x_{n+2})\\
\nonumber&+&\sum_{1\leq i<j\leq n+1}(-1)^{i+j} (-1)^{\gamma_{ij}} \partial^nf([\beta(x_i),\alpha(x_j)]_C,\alpha\beta(x_1),\dots,\widehat{\alpha\beta(x_i)},\dots,\widehat{\alpha\beta(x_j)}
,\dots,\alpha\beta(x_{n+1}),\beta(x_{n+2}))\\
\nonumber&=&\Theta_1+\Theta_2-\Theta_3+\Theta_4.
\end{eqnarray}

Where
\small{\begin{eqnarray}
 \nonumber\Theta_1
&=&\displaystyle\sum_{i=1}^{n+1}(-1)^{i+1}l(\alpha^{n}\beta^{n}(x_i))(-1)^{|x_i||X|_{i-1}}
\partial^nf(\alpha(x_1),\dots,\widehat{\alpha(x_i)},\dots,\alpha(x_{n+1}),x_{n+2})\\
 \label{e11}&=&\displaystyle\sum_{i=1}^{n+1}\displaystyle\sum_{\stackrel{j=1}{j<i}}^n(-1)^{i+j}(-1)^{|x_i||X|_{i-1}+|x_j||X|_{j-1}}
 l(\alpha^{n}\beta^{n}(x_i))ã(\alpha^{n}\beta^{n-1}(x_j))f(\alpha^2(x_1),..,\widehat{x_{j,i}}
,..,\alpha^2(x_{n+1}),x_{n+2})\\
\label{e12} &+&\displaystyle\sum_{i=1}^{n+1}\displaystyle\sum_{\stackrel{j=1}{j>i}}^n(-1)^{i+j-1}(-1)^{\gamma_{ij}}
l(\alpha^{n}\beta^{n}(x_i))l(\alpha^{n}\beta^{n-1}(x_j))
f(\alpha^2(x_1),..,\widehat{x_{i,j}},..,\alpha^2(x_{n+1}),x_{n+2})\\
\label{e13} &+&\displaystyle\sum_{i=1}^{n+1}\displaystyle\sum_{\stackrel{j=1}{j<i}}^n(-1)^{i+j}(-1)^{\theta_{ij}} l(\alpha^{n}\beta^{n}(x_i))r(\beta^{n-1}(x_{n+2}))
f(\alpha\beta(x_1),..,\widehat{x_{j,i}},..,\alpha\beta(x_{n+1}),
\alpha^n(x_j))\\
\label{e14} &+&\displaystyle\sum_{i=1}^{n+1}\displaystyle\sum_{\stackrel{j=1}{j>i}}^n(-1)^{i+j-1}(-1)^{\theta_{ij}+|x_i||x_j|}
l(\alpha^{n}\beta^{n}(x_i))r(\beta^{n-1}(x_{n+2}))
f(\alpha(x_1),..,\widehat{x_{i,j}},..,\alpha\beta(x_{n+1}),\alpha^n(x_{j}))\\
\label{e15} &-&\displaystyle\sum_{i=1}^{n+1}\displaystyle\sum_{\stackrel{j=1}{j<i}}^n(-1)^{i+j}(-1)^{\gamma_{ij}+|x_j||x_{n+2}|}
l(\alpha^{n}\beta^{n}(x_i))f(\alpha^2\beta(x_1),..,\widehat{x_{j,i}},..,\alpha^2\beta(x_{n+1}),\alpha^n(x_j)\cdot x_{n+2})\\
\label{e16} &-&\displaystyle\sum_{i=1}^{n+1}\displaystyle\sum_{\stackrel{j=1}{j>i}}^n(-1)^{i+j-1}(-1)^{|x_i||X|_{i-1}+|x_j||X|_{n+1}^{j+1}}
l(\alpha^{n}\beta^{n}(x_i))f(\alpha^2\beta(x_1),..,\widehat{x_{i,j}},..,\alpha^2\beta(x_{n+1}),\alpha^n(x_j)\cdot x_{n+2})\\
\label{e17} &-&\displaystyle\sum_{i=1}^{n+1}\displaystyle\sum_{{j<k<i}}(-1)^{i+j+k}(-1)^{\gamma_{jki}}
l(\alpha^{n}\beta^{n}(x_i))f([\alpha\beta(x_j),\alpha^2(x_k)]_C,\alpha^2\beta(x_1),..,\widehat{x_{j,k,i}},..,\alpha^2\beta(x_{n+1}),\beta(x_{n+2}))\\
\label{e18} &+&\displaystyle\sum_{i=1}^{n+1}\displaystyle\sum_{{j<i<k}}(-1)^{i+j+k}(-1)^{\gamma_{jik}}
l(\alpha^{n}\beta^{n}(x_i))f([\alpha\beta(x_j),\alpha^2(x_k)]_C,\alpha^2\beta(x_1),..,\widehat{x_{j,i,k}},..,\alpha^2\beta(x_{n+1}),\beta(x_{n+2}))\\
\label{e19} &-&\displaystyle\sum_{i=1}^{n+1}\displaystyle\sum_{{i<j<k}}(-1)^{i+j+k}(-1)^{\gamma_{ijk}}
l(\alpha^{n}\beta^{n}(x_i))f([\alpha\beta(x_j),\alpha^2(x_k)]_C,\alpha^2\beta(x_1),..,\widehat{x_{i,j,k}},..,\alpha^2\beta(x_{n+1}),\beta(x_{n+2}))
\end{eqnarray}}
where
\begin{eqnarray*}
\theta_{i<j}&=&|x_{n+2}||X|_{n+1}+|x_i||X|_{i-1}+|x_j||X|^{j+1}+|x_{n+2}|(|x_i|+|x_j|)\\
\gamma_{ijk}&=&|x_i||X|_{i-1}+|x_j||X|_{j-1}+|x_k||X|_{k-1}+|x_k|(|x_i|+|x_j|)+|x_i||x_j|.
\end{eqnarray*}
and
{\small\begin{align}
& \Theta_2\nonumber=\sum_{i=1}^{n+1}(-1)^{i+1}(-1)^{|x_{n+2}||X|_{n+1}+|x_i||X|^{i+1}_{n+1}}
r(\beta^n(x_{n+2}))\partial^nf(\beta(x_1),..,\widehat{\beta(x_i)},..,\beta(x_{n+1}),\alpha^n(x_i))\\
 \label{e21}&=\sum_{i=1}^{n+1}\sum_{\stackrel{j=1}{j<i}}^n(-1)^{i+j}(-1)^{\theta_{ji}+|x_j||x_{n+2}|}r(\beta^n(x_{n+2}))l(\alpha^{n-1}\beta^{n}(x_j))
 f(\alpha\beta(x_1),..,\widehat{x_{j,i}},..,\alpha\beta(x_{n+1}),\alpha^n(x_i))\\
 \label{e22}&+\sum_{i=1}^{n+1}\sum_{\stackrel{j=1}{j>i}}^n(-1)^{i+j-1}(-1)^{\theta_{ji}+|x_j|(|x_i|+|x_{n+2}|)}
r(\beta^n(x_{n+2}))l(\alpha^{n-1}\alpha^{n-1}\beta^{n}(x_j))
 f(\alpha\beta(x_1),..,\widehat{x_{i,j}},..,\alpha\beta(x_{n+1}),\alpha^n(x_i))\\
 \label{e23}&+\sum_{i=1}^{n+1}\sum_{\stackrel{j=1}{j<i}}^n(-1)^{i+j}(-1)^{\theta_{ij}+|x_i|(|x_j|+|x_{n+2}|)}r(\beta^n(x_{n+2}))r(\alpha^n\beta^{n-1}(x_i))
 f(\beta^2(x_1),..,\widehat{x_{j,i}},..,\alpha^{n-1}(x_j))\\
 \label{e24}&+\sum_{i=1}^{n+1}\sum_{\stackrel{j=1}{j>i}}^n(-1)^{i+j-1}(-1)^{\theta_{ij}+|x_i||x_{n+2}|}r(\beta^n(x_{n+2}))r(\alpha^n\beta^{n-1}(x_i))
 f(\beta^2(x_1),..,\widehat{x_{i,j}},..,\alpha^{n-1}\beta(x_j))\\
  \label{e25}&-\sum_{i=1}^{n+1}\sum_{\stackrel{j=1}{j<i}}^n(-1)^{i+j}(-1)^{|x_i||X|^{i+1}+|x_j||X|^{j+1}+|x_{n+2}||X|_{n+1}}r(\beta^n(x_{n+2})) f(\alpha\beta^2(x_1),..,\widehat{x_{j,i}},..,\alpha\beta^2(x_{n+1}),\alpha^{n-1}\beta(x_j)\cdot\alpha^n(x_i))\\
\label{e26}&-\sum_{i=1}^{n+1}\sum_{\stackrel{j=1}{j>i}}^n(-1)^{i+j-1}(-1)^{|x_i||X|^{i+1}+|x_j||X|^{j+1}+|x_{n+2}||X|_{n+1}}(-1)^{|x_i||x_j|}r
(\beta^n(x_{n+2}))\\
\nonumber&f(\alpha\beta^2(x_1),..,\widehat{x_{i,j}},..,\alpha\beta^2(x_{n+1}),\alpha^{n-1}\beta(x_j)\cdot\alpha^n(x_i))\\
\label{e27} &-\sum_{i=1}^{n+1}\sum_{{j<k<i}}(-1)^{i+j+k}(-1)^{|x_i|(|X|^{i+1}+|x_{n+2}|)+|x_j|(|X|_{j-1}+|x_k|)}(-1)^{|x_k||X|_{k-1}+|x_{n+2}||X|_{n+1}}\\
\nonumber&r(\beta^n(x_{n+2}))
f([\beta^2(x_j),\alpha^2(x_k)]_C,\alpha\beta^2(x_1),..,\widehat{x_{j,k,i}},..,\alpha\beta^2(x_{n+1}),\alpha^n\beta(x_{i}))\\
\label{e28} &+\sum_{i=1}^{n+1}\sum_{{j<i<k}}(-1)^{i+j+k}(-1)^{|x_i|(|X|^{i+1}+|x_k|+|x_{n+2}|)}(-1)^{|x_j|(|X|_{j-1}+|x_k|)+|x_k||X|_{k-1}+|x_{n+2}||X|_{n+1}}\\
\nonumber&r(\beta^n(x_{n+2}))
f([\beta^2(x_j),\alpha^2(x_k)]_C,\alpha\beta^2(x_1),..,\widehat{x_{j,i,k}},..,\alpha\beta^2(x_{n+1}),\alpha^n\beta(x_{i}))\\
\label{e29} &-\sum_{i=1}^{n+1}\sum_{{i<j<k}}(-1)^{i+j+k}R(\beta^n(x_{n+2}))(-1)^{|x_i|(|X|^{i+1}+|x_k|+|x_j|+|x_{n+2}|)}
(-1)^{|x_j|(|X|_{j-1}+|x_k|)+|x_k||X|_{k-1}+|x_{n+2}||X|_{n+1}}\\
\nonumber&f([\beta^2(x_j),\alpha^2(x_k)]_C,\alpha\beta^2(x_1),..,\widehat{x_{i,j,k}},..,\alpha\beta^2(x_{n+1}),\alpha^n\beta(x_{i})),
\end{align}}
and
{\small\begin{align}
\nonumber&\Theta_3=\sum_{i=1}^{n+1}(-1)^{i+1}(-1)^{|x_i||X|^{i+1}_{n+1}}  \partial^nf(\alpha\beta(x_1),..,\widehat{\alpha\beta(x_i)}..,\alpha\beta(x_{n+1}),\alpha^n(x_i)\cdot x_{n+2})\\
\label{e31}&=\sum_{i=1}^{n+1}\sum_{\stackrel{j=1}{j<i}}^n(-1)^{i+j}(-1)^{|x_j||X|_{j-1}+|x_i||X|^{i+1}_{n+1}}
l(\alpha^n\beta^n(x_{j})) f(\alpha^2\beta(x_1),..,\widehat{x_{j,i}},..,\alpha^2\beta(x_{n+1}),\alpha^n(x_i)\cdot x_{n+2})\\
  \label{e32}&+\sum_{i=1}^{n+1}\sum_{\stackrel{j=1}{j>i}}^n(-1)^{i+j-1}(-1)^{|x_j|(|X|_{j-1}+|x_i|)+|x_i||X|^{i+1}_{n+1}}
l(\alpha^n\beta^n(x_{j})) f(\alpha^2\beta(x_1),..,\widehat{x_{i,j}},..,\alpha^2\beta(x_{n+1}),\alpha^n(x_i)\cdot x_{n+2})\\
 \label{e33}&+\sum_{i=1}^{n+1}\sum_{\stackrel{j=1}{j<i}}^n(-1)^{i+j}(-1)^{\theta_{ij}+|x_j|(|x_i|+|x_{n+2}|)}
r(\alpha^n\beta^{n-1}(x_i)\cdot\beta^{n-1}(x_{n+2}))
 f(\alpha\beta^2(x_1),..,\widehat{x_{j,i}},..,\alpha^2\beta(x_{n+1}),\alpha^n\beta(x_j))\\
 \label{e34}&+\sum_{i=1}^{n+1}\sum_{\stackrel{j=1}{j>i}}^n(-1)^{i+j}(-1)^{\theta_{ij}+|x_j||x_{n+2}|}
r(\alpha^n\beta^{n-1}(x_i)\cdot\beta^{n-1}(x_{n+2}))
 f(\alpha\beta^2(x_1),..,\widehat{x_{i,j}},..,\alpha^2\beta(x_{n+1}),\alpha^n\beta(x_j))\\
\label{e35}&-\sum_{i=1}^{n+1}\sum_{\stackrel{j=1}{j<i}}^n(-1)^{i+j} (-1)^{|X|^{i+1}_{n+1}(|x_i|+|x_j|)+|x_j||X|^{j+1}_{i-1}} f(\alpha^2\beta^2(x_1),..,\widehat{x_{j,i}},..,\alpha^2\beta^2(x_{n+1}),(\alpha^n\beta(x_{j}))\cdot(\alpha^n(x_i)\cdot x_{n+2}))\\
 \label{e36}&-\sum_{i=1}^{n+1}\sum_{\stackrel{j=1}{j>i}}^n(-1)^{i+j} (-1)^{|X|^{j+1}_{n+1}(|x_i|+|x_j|)+|x_i||X|^{j}_{i-1}} f(\alpha^2\beta^2(x_1),..,\widehat{x_{i,j}},..,\alpha^2\beta^2(x_{n+1}),(\alpha^n\beta(x_{j}))\cdot(\alpha^n(x_i)\cdot x_{n+2}))\\
 \label{e37} &-\sum_{i=1}^{n+1}\sum_{{j<k<i}}(-1)^{i+j+k}(-1)^{|x_j|(|X|_{j-1}+|x_k|)+|x_k||X|_{k-1}+|x_i||X|_{n+1}^{i+1}}\nonumber\\
 &f([\alpha\beta^2(x_j),\alpha^2\beta(x_k)]_C,\alpha^2\beta^2(x_1),..,\widehat{x_{j,k,i}},..,\alpha^2\beta^2(x_{n+1}),\alpha^n\beta(x_i)\cdot \beta(x_{n+2}))\\
\label{e38} &+\sum_{i=1}^{n+1}\sum_{{j<i<k}}(-1)^{i+j+k}(-1)^{|x_j|(|X|_{j-1}+|x_k|)+|x_k|(|X|_{k-1}+|x_i|)+|x_i||X|_{n+1}^{i+1}}\nonumber\\
&f([\alpha\beta^2(x_j),\alpha^2\beta(x_k)]_C,\alpha^2\beta^2(x_1),..,\widehat{x_{j,i,k}},..,\alpha^2\beta^2(x_{n+1}),\alpha^n\beta(x_i)\cdot\beta(x_{n+2}))\\
\label{e39} &-\sum_{i=1}^{n+1}\sum_{{i<j<k}}(-1)^{i+j+k}(-1)^{|x_j|(|X|_{j-1}+|x_i|+|x_k|)+|x_k|(|X|_{k-1}+|x_i|)+|x_i||X|_{n+1}^{i+1}}\nonumber\\
&f([\alpha\beta^2(x_j),\alpha^2\beta(x_k)]_C,\alpha^2\beta^2(x_1),..,\widehat{x_{i,j,k}},..,\alpha^2\beta^2(x_{n+1}),\alpha^n\beta(x_i)\cdot \beta(x_{n+2}))
\end{align}}
and
{\small\begin{align}
\nonumber&\Theta_4= \sum_{1\leq i<j\leq n+1}(-1)^{i+j} (-1)^{\gamma_{ij}} \partial^nf([\beta(x_i),\alpha(x_j)]_C,\alpha\beta(x_1),..,\widehat{x_{i,j}},..,\alpha\beta(x_{n+1}),\beta(x_{n+2}))\\
\label{e41}&=\sum_{\stackrel{1\leq i< j\leq n+1}{}}(-1)^{i+j}(-1)^{\gamma_{ij}}l([\alpha^{n-1}\beta^n(x_i),\alpha^n\beta^{n-1}(x_j)]_C) f(\alpha^2\beta(x_1),..,\widehat{x_{i,j}},..,\alpha^2\beta(x_{n+1}),\beta(x_{n+2}))\\
\label{e42}&+\sum_{\stackrel{1\leq i< j}{}}^{ n+1}\sum_{{k<i}}(-1)^{i+j+k+1}(-1)^{\gamma_{ij}+|x_k|(|X|_{k-1}+|x_i|+|x_j|)}
l(\alpha^n\beta^n(x_k)) f([\alpha\beta(x_i),\alpha^2(x_j)]_C ,\alpha^2\beta(x_1),..,\widehat{x_{k,i,j}},..\alpha^2\beta(x_{n+1}),\beta(x_{n+2}))\\
\label{e43}&+\sum_{\stackrel{1\leq i< j}{}}^{ n+1}\sum_{{i<k<j}}(-1)^{i+j+k}(-1)^{\gamma_{ij}+|x_k|(|X|_{k-1}+|x_j|)}
l(\alpha^n\beta^n(x_k))f([\alpha\beta(x_i),\alpha^2(x_j)]_C ,\alpha^2\beta(x_1),..,\widehat{x_{i,k,j}},..\alpha^2\beta(x_{n+1}),\beta(x_{n+2}))\\
\label{e44}&+\sum_{\stackrel{1\leq i< j}{}}^{ n+1}\sum_{{j<k}}(-1)^{i+j+k+1}(-1)^{\gamma_{ij}+|x_k||X|_{k-1}}
l(\alpha^n\beta^n(x_k)) f([\alpha\beta(x_i),\alpha^2(x_j)]_C ,\alpha^2\beta(x_1),..,\widehat{x_{i,j,k}},..\alpha^2\beta(x_{n+1}),\beta(x_{n+2}))\\
\label{e45}&+\sum_{\stackrel{1\leq i< j}{}}^{ n+1}(-1)^{i+j} (-1)^{|x_i|(|X|^{i+1}_{n+1}+|x_j|)+|x_j||X|^{j+1}_{n+1}+|x_{n+2}||X|_{n+1}}
r(\beta^n(x_{n+2})) f(\alpha\beta^2(x_1),..,\widehat{x_{i,j}},..,\alpha\beta^2(x_{n+1}),[\alpha^{n-1}\beta(x_i),\alpha^n(x_j)]_C)\\
\label{e46}&-\sum_{\stackrel{1\leq i< j}{}}^{n+1}\sum_{{k<i}}(-1)^{i+j+k}(-1)^{\gamma_{ij}+|x_k|(|X|^{k+1}_{n+1}+|x_i|+|x_j|)+|x_{n+2}||X|_{n+1}}
r(\beta^n(x_{n+2})) f([\beta^2(x_i),\alpha\beta(x_j)]_C,\alpha\beta^2(x_1),..,\widehat{x_{k,i,j}},..,\alpha\beta^2(x_{n+1}),\alpha^n\beta(x_k))\\
\label{e47}&+\sum_{\stackrel{1\leq i< j}{}}^{ n+1}\sum_{{i<k<j}}(-1)^{i+j+k}(-1)^{\gamma_{ij}+|x_k|(|X|^{k+1}_{n+1}+|x_j|)+|x_{n+2}||X|_{n+1}}
r(\beta^n(x_{n+2})) f([\beta^2(x_i),\alpha\beta(x_j)]_C,\alpha\beta^2(x_1),..,\widehat{x_{i,k,j}},..,\alpha\beta^2(x_{n+1}),\alpha^n\beta(x_k))\\
\label{e48}&-\sum_{\stackrel{1\leq i<j}{}}^{n+1}\sum_{{j<k}}(-1)^{i+j+k}(-1)^{\gamma_{ij}+|x_k||X|^{k+1}_{n+1}+|x_{n+2}||X|_{n+1}}
r(\beta^n(x_{n+2})) f([\beta^2(x_i),\alpha\beta(x_j)]_C,\alpha\beta^2(x_1),..,\widehat{x_{i,j,k}},..,\alpha\beta^2(x_{n+1}),\alpha^n\beta(x_k))
\end{align}
\begin{align}
\label{e51}-&\sum_{\stackrel{1\leq i< j}{}}^{n+1}(-1)^{i+j}(-1)^{|x_i|(|X|_{n+1}^{i+1}+|x_j|)+|x_j||X|_{n+1}^{j+1}}  f(\alpha^2\beta^2(x_1),..,\widehat{x_{i,j}},..,\alpha^2\beta^2(x_{n+1}),[\alpha^{n-1}\beta(x_i),\alpha^n(x_j)]_C\cdot \beta^n(x_{n+2}))\\
\label{e52}+&\sum_{\stackrel{1\leq i< j}{}}^{n+1}\sum_{{k<i}}(-1)^{i+j+k}(-1)^{\gamma_{ij}+|x_k|(|X|_{n+1}^{k+1}+|x_i|+|x_j|)} f([\alpha\beta^2(x_i),\alpha^2\beta(x_j)],\alpha^2\beta^2(x_1),..,\widehat{x_{k,i,j}},..,\alpha^2\beta^2(x_{n+1}),\alpha^n\beta(x_k)\cdot \beta(x_{n+2}))\\
\label{e53}-&\sum_{\stackrel{1\leq i< j}{}}^{n+1}\sum_{{i<k<j}}(-1)^{i+j+k} (-1)^{\gamma_{ij}+|x_k|(|X|_{n+1}^{k+1}+|x_j|)} f([\alpha\beta^2(x_i),\alpha^2\beta(x_j)],\alpha^2\beta^2(x_1),..,\widehat{x_{i,k,j}},..,\alpha^2\beta^2(x_{n+1}),\alpha^n\beta(x_k)\cdot \beta(x_{n+2}))\\
\label{e54}+&\sum_{\stackrel{1\leq i< j}{}}^{n+1}\sum_{{j<k}}(-1)^{i+j+k}(-1)^{\gamma_{ij}+|x_k||X|_{n+1}^{k+1}}
f([\alpha\beta^2(x_i),\alpha^2\beta(x_j)],\alpha^2\beta^2(x_1),..,\widehat{x_{i,j,k}},..,\alpha^2\beta^2(x_{n+1}),\alpha^n\beta(x_k)\cdot \beta(x_{n+2}))\\
\label{e55}-&\sum_{\stackrel{1\leq i< j}{}}^{n+1}\sum_{{l<i}}(-1)^{i+j+l}(-1)^{\gamma_{lij}}
 f([[\beta^2(x_i),\alpha\beta(x_j)],\alpha^2\beta(x_l)],\alpha^2\beta^2(x_1),..,\widehat{x_{l,i,j}},..,
\alpha^2\beta^2(x_{n+1}),\beta^2(x_{n+2}))\\
\label{e56}+&\sum_{\stackrel{1\leq i< j}{}}^{n+1}\sum_{{i<l<j}}(-1)^{i+j+l}(-1)^{\gamma_{ilj}}
 f([[\beta^2(x_i),\alpha\beta(x_j)],\alpha^2\beta(x_l)],\alpha^2\beta^2(x_1),..,\widehat{x_{i,l,j}},..,
\alpha^2\beta^2(x_{n+1}),\beta^2(x_{n+2}))\\
\label{e57}-&\sum_{\stackrel{1\leq i< j}{}}^{n+1}\sum_{{j<l}}(-1)^{i+j+l}(-1)^{\gamma_{ijl}}
f([[\beta^2(x_i),\alpha\beta(x_j)],\alpha^2\beta(x_l)],\alpha^2\beta^2(x_1),..,\widehat{x_{i,j,l}},..,
\alpha^2\beta^2(x_{n+1}),\beta^2(x_{n+2}))\\
\nonumber&+\sum_{\stackrel{1\leq i< j}{}}^{n+1}\sum_{{k<l<i}}(-1)^{i+j+l+k}(-1)^{\gamma_{ij}+\gamma_{kl}+(|x_i|+|x_j|)(|x_k|+|x_l|)}
f([\alpha\beta^2(x_k),\alpha^2\beta(x_l)]_C,[\alpha\beta^2(x_i),\alpha^2(x_j)]_C,\alpha^2\beta^2(x_1),..,\\
\label{e58}&\widehat{x_{k,l,i,j}},..,
\alpha^2\beta^2(x_{n+1}),\beta^2(x_{n+2}))\\
\nonumber&-\sum_{\stackrel{1\leq i< j}{}}^{n+1}\sum_{{k<i<l<j}}(-1)^{i+j+l+k}(-1)^{\gamma_{ij}+\gamma_{kl}+|x_k|(|x_i|+|x_j|)+|x_l||x_j|}
 f([\alpha\beta^2(x_k),\alpha^2\beta(x_l)]_C,[\alpha\beta^2(x_i),\alpha^2(x_j)]_C,\alpha^2\beta^2(x_1),..,\\
\label{e59}&\widehat{x_{k,i,l,j}},..,
\alpha^2\beta^2(x_{n+1}),\beta^2(x_{n+2}))\\
\nonumber&+\sum_{\stackrel{1\leq i< j}{}}^{n+1}\sum_{{k<i<j<l}}(-1)^{i+j+l+k}(-1)^{\gamma_{ij}+\gamma_{kl}+|x_k|(|x_i|+|x_j|)}
f([\alpha\beta^2(x_k),\alpha^2\beta(x_l)]_C,[\alpha\beta^2(x_i),\alpha^2(x_j)]_C,\alpha^2\beta^2(x_1),..,\\
\label{e510}&\widehat{x_{k,i,j,l}},..,\alpha^2\beta^2(x_{n+1}),\beta^2(x_{n+2}))\\
\nonumber&-\sum_{\stackrel{1\leq i<j}{}}^{n+1}\sum_{{i<k<l<j}}(-1)^{i+j+l+k}(-1)^{\gamma_{ij}+\gamma_{kl}+|x_j|(|x_k|+|x_l|)}
f([\alpha\beta^2(x_k),\alpha^2\beta(x_l)]_C,[\alpha\beta^2(x_i),\alpha^2(x_j)]_C,\alpha^2\beta^2(x_1),..,\\
\label{e511}&\widehat{x_{i,k,l,j}},..,
\alpha^2\beta^2(x_{n+1}),\beta^2(x_{n+2}))\\
\nonumber&+\sum_{\stackrel{1\leq i< j}{}}^{n+1}\sum_{{i<k<j<l}}(-1)^{i+j+l+k}(-1)^{\gamma_{ij}+\gamma_{kl}+|x_j||x_k|}
 f([\alpha\beta^2(x_k),\alpha^2\beta(x_l)]_C,[\alpha\beta^2(x_i),\alpha^2(x_j)]_C,\alpha^2\beta^2(x_1),..,\\
\label{e512}&\widehat{x_{i,k,j,l}},..,
\alpha^2\beta^2(x_{n+1}),\beta^2(x_{n+2}))\\
\nonumber&-\sum_{\stackrel{1\leq i< j}{}}^{n+1}\sum_{{j<k<l}}(-1)^{i+j+l+k}(-1)^{\gamma_{ij}+\gamma_{kl}}
 f([\alpha\beta^2(x_k),\alpha^2\beta(x_l)]_C,[\alpha\beta^2(x_i),\alpha^2(x_j)]_C,\alpha^2\beta^2(x_1),..,\\
\label{e513}&\widehat{x_{i,j,k,l}},..,
\alpha^2\beta^2(x_{n+1}),\beta^2(x_{n+2})).
\end{align}}
In $\Theta_1,..., \Theta_4$,  $\widehat{x_{i, j, k, l}}$ means that we omits the items $x_i, x_j, x_k, x_l$.\\

We use the same idea used in \cite{Ben-Hassine-Chtioui-Mabrouk-Ncib}, where each element multiplied by their cofficients according to their parity, and by a direct computation we find the result.

%
\end{proof}

Denote the set of  $n$-cocycles by $Z^n(A;V)=\ker(\partial^n)$ and the set of  $n$-cobords by $B^n(A;V)=Im(\partial^{n-1})$. We denote by $H^n(A;V)=Z^n(A;V)/B^n(A;V)$ the corresponding cohomology groups of the BiHom-pre-Lie superalgebra $(A,\cdot,\alpha,\beta)$ with the coefficient in the representation $(V,l,r,\alpha_V,\beta_V)$.

\paragraph{\textbf{Acknowledgment.} We would like to  thank  Abdelkader Ben Hassine, Sami Mabrouk and Taoufik Chtioui for them valuable remarks and suggestions. }


\begin{thebibliography}{10}

\bibitem{Abdaoui-Mabrouk-Makhlouf}
El-Kadri Abdaoui, Sami Mabrouk and Abdenacer Makhlouf,
\newblock {\it Rota-Baxter operators on Pre-Lie Superalgebras},
\newblock Bull. Malays. Math. Sci. Soc
\newblock, vol. \textbf{42}: (2019), 1567-1606.
\bibitem{Aguiar2004}
M. Aguiar,
\newblock {\it Infinitesimal bialgebras, pre-Lie algebras and dendriform algebras, in "Hopf algebras"},
\newblock  Lecture Notes in Pure and Appl. Math
\newblock, vol. \textbf{237}: (2004), 1--33.

\bibitem{Aguiar2000}
M. Aguiar,
\newblock {\it Pre-Poisson algebras},
\newblock  Lett. Math. Phys
\newblock, vol. \textbf{54}: (2000), 263--277.

\bibitem{Aguiar-Loday2004}
M. Aguiar and J. L. Loday,
\newblock {\it Quadri-algebras},
\newblock  J. Pure Appl Algebra
\newblock, vol. \textbf{191}: (2004), 251--221.

\bibitem{Aizawa-N-Sato-H}
Aizawa. N and Sato. H,
\newblock{\it $q$-deformation of the Virasoro algebra with central extension},
\newblock Phys. Letter. B
\newblock, vol. \textbf{256}(1991), 185-190.


\bibitem{Ammar-Makhlouf}
F. Ammar and A. Makhlouf,
\newblock {\it Hom-Lie superalgebras and Hom-Lie admissible superalgebras},
\newblock J. Algebra
\newblock {\bf 324} (2010), 1513--1528.


\bibitem{Andrada-Salamon}
A. Andrada and S. Salamon,
\newblock {\it Complex product structure on Lie algebras},
\newblock Forum  Math
\newblock {\bf 17} (2005), 261--295.

\bibitem{Atkinson1967}
F. V. Atkinson,
\newblock {\it Some aspects of Baxter's functional equation},
\newblock J. Math. Anal. Appl
\newblock, vol. \textbf{7}: (1967), 1--30.

\bibitem{Baxter1960}
G. Baxter,
\newblock {\it An analytic problem whose solution follows from a simple algebraic identity},
\newblock  Pacific J. Math
\newblock, vol. \textbf{10}: (1960), 731-742.

\bibitem{Bai phase spaces}
 C. M. Bai,
\newblock {\it A further study on non-abelian phase spaces: Left-symmetric algebraic approach and related geometry },
\newblock Rev. Math. Phys,
\newblock vol. \textbf{18} (2006), 545--564


\bibitem{Bai O-operators}
 C. M. Bai,
\newblock {\it $\mathcal{O}$-operators of Loday algebras and analogues of the classical Yang-Baxter equation},
\newblock Comm. Algebra,
\newblock vol. \textbf{38} (2010), 4277--4321.

\bibitem{Bai-algebraic}
C. M. Bai,
\newblock {\it A unified algebraic approach to classical Yang-Baxter equation},
\newblock J. Phys. A: Math. Theor
\newblock {\bf 40} (2007), 11073--11082.





\bibitem{Bai-Liu OOperaorLie}
C. M. Bai, L. Guo, X. Ni,
\newblock {\it O-operators on associative algebras and associative Yang-Baxter equations},
\newblock Pacific Journal of Mathematics
\newblock {\bf 256} (2012), 257--289.

\bibitem{Bai-Liu OOperaorAss}
C. M. Bai, L. Guo, X. Ni,
\newblock {\it Generalizations of the classical  Yang-Baxter equation and O-operators},
\newblock Journal of Mathematical Physics
\newblock {\bf } (2011), 52:063515.

\bibitem{Bai-Liu L-dendrifom}
C. M. Bai, L. G. Liu,
\newblock {\it Some results on $L$-dendriform algebras},
\newblock J. Geom Phys
\newblock {\bf 60} (2010), 940--950

\bibitem{Bai and Zhang classif}
C. M. Bai and R. Zhang,
\newblock {\it On some left-symmetric superalgebras},
\newblock J. of Algebra and its applications,
\newblock vol. \textbf{11} (5) (2012).

\bibitem{IbrahimaBakayoko}
I. Bakayoko,
\newblock {\it Hom-post-Lie modules, O-operators and some functors on Hom-algebras},
\newblock arXiv preprint,
\newblock arXiv:1610.02845, 2016.

\bibitem{Ben-Hassine-Chtioui-Mabrouk-Ncib}
 Abdelkader Ben Hassine, Taoufik Chtioui, Sami Mabrouk, Othmen Ncib,
\newblock {\it Cohomology and linear deformation of BiHom-left-symmetric algebras},
\newblock arXiv preprint,
\newblock arXiv:1907.06979.

\bibitem{Bordemann}
M. Bordemann,
\newblock {\it Generalized Lax pairs, the modified classical Yang-Baxter equations, and affine geometry of Lie groups},
\newblock Comm Math. Phys,
\newblock vol. \textbf{135} (1) (1990), 201--216.




\bibitem{Burde}
D. Burde,
\newblock {\it Left-symmetric algebras, or pre-Lie algebras in geometry and physics},
\newblock Cent. Eur. J. Math,
\newblock vol. \textbf{4} (3) (2006), 323--357.

\bibitem{Cartier1972}
P. Cartier,
\newblock {\it On the structure of free Baxter algebras},
\newblock Advences in Math
\newblock {\bf 9} (1972), 253--265.

\bibitem{Chapoton-Livernet}
F. Chapoton, M. Livernet,
\newblock {\it Pre-Lie algebras and the rooted trees operad},
\newblock Internat. Math. Res.
\newblock Notices {\bf 8} (2001), 395--408.

%
%
%

\bibitem{Cheng-Qi}
 Y. Cheng and H. Qi,
\newblock {\it Representations of BiHom-Lie algebras},
\newblock arXiv preprint,
\newblock arXiv:1610.04302v1(2016).

\bibitem{Chaichian-Kulish-Lukierski}
Chaichian. M, Kulish. Pand Lukierski. J,
\newblock {\it $q$-deformed Jacobi identity, $q$-oscillators and $q$-deformed infinitedimensional algebras},
\newblock Phys. Lett. B,
\newblock vol. \textbf{237}(1990), 401-406.

\bibitem{Chu}
B. Y. Chu,
\newblock {\it Symplectic homogeneous spaces},
\newblock Trans. Amer. Math. Soc
\newblock {\bf  197} (1974), 145--159.

\bibitem{Connes-Kreimer}
A. Connes and D. Kreimer,
\newblock {\it Hopf algebras, renormalization and noncommutative geometry},
\newblock Comm. Math. Phys
\newblock {\bf  199} (1998), 203--242.

\bibitem{Curtright-Zachos}
Curtright T.L and Zachos C.K,
\newblock {\it Deforming maps for quantum algebras},
\newblock Phys. Lett. B
\newblock vol. \textbf{243} (1990), 237–244.


\bibitem{Dardié-Medina1}
J. M. Dardi\'{e} and A. M\'{e}dina,
\newblock {\it Alg\`{e}bres de Lie Kahl\'{e}riennes et double extension},
\newblock J. Algebra
\newblock {\bf  185} (1996), 744--795.

\bibitem{Dardié-Medina2}
J. M. Dardi\'{e} and A. M\'{e}dina,
\newblock {\it Double extension symplectique d'un groupe de Lie symplectique},
\newblock Adv. Math
\newblock {\bf  117} (1996), 208--227.

\bibitem{Diata-Medina}
A. Diatta and A. Medina,
\newblock {\it Classical Yang-Baxter equation and left-invariant affine geometry on Lie groups},
\newblock Manuscipta. Math
\newblock {\bf  114} (2004), 477--486.

\bibitem{A-Dzhumadil-daev}
A. Dzhumadil'daev,
\newblock {\it Cohomologies and deformations of right-symmetric algebras},
\newblock J. Math. Sci.
\newblock {\bf 93} (1999), no. 6, 836-876.

\bibitem{Ebrahimi-loday.algebras}
K. Ebrahimi-Fard,
\newblock {\it Loday-type algebras and the Rota-Baxter relation},
\newblock Lett. Math. Phys
\newblock {\bf 61} (2002), 139--147.

\bibitem{Ebrahimi-assoc-Nijenhuis}
K. Ebrahimi-Fard,
\newblock {\it On the associative Nijenhuis relation},
\newblock  Elect. J. Comb.
\newblock {\bf 11} no. 1, (2004).



\bibitem{Ebrahimi-Manchon-Patr}
K. Ebrahimi-Fard, D. Manchon, F. Patras,
\newblock {\it New identities in dendriform algebras},
\newblock J. Algebra
\newblock {\bf 320} (2008), 708--727.

%


\bibitem{Gerstenhaber}
M. Gerstenhaber,
\newblock {\it The cohomology structure of associative ring},
\newblock Ann. of Math
\newblock {\bf 78} (1963), 267--288.

\bibitem{Goze}
M. Goze and E. Remm,
\newblock {\it Lie-admissible algebras and operads},
\newblock J. Algebra
\newblock {\bf 273} (2004), 129--152.

\bibitem{Graziani-Makhlouf-Menini-Panaite}
G. Graziani, A. Makhlouf, C. Menini and F. Panaite
\newblock {\it BiHom-associative algebras, BiHom-Lie algebras and BiHom-bialgebras},
\newblock  Symmetry, Integrability and geometry, SIGMA
\newblock  \textbf{\textbf{11}} (2015), 086, 34 pages.



\bibitem{Guo-Introd}
L. Guo,
\newblock {\it An introduction to Rota-Baxter Algebra},
\newblock http://andromeda.ritgers.edu/Liguo/rbabook.pdf.


%

\bibitem{Koszul}
J. -L. Koszul,
\newblock {\it Domaines born\'{e}s homog\`{e}nes et orbites de groupes de transformations affines},
\newblock Bull. Soc. Math. France
\newblock {\bf  89} (1961), 515--533.


\bibitem{Kupershmidt2}
B. A. Kupershmidt,
\newblock {\it Non-abelian phase spaces},
\newblock J. Phys. A: Math. Gen,
\newblock vol. \textbf{27} (1994), 2801-2809.




\bibitem{Lichnerowicz-Medina}
A. Lichnerowicz and A. Medina,
\newblock {\it On Lie group with left-invariant symplectic or K\"{a}hlerian},
\newblock Lett. Math. Phys.
\newblock  {\bf 16} (3) (1988), 225--235.

\bibitem {S-Liu-Songand-R-Tang}
S. Liu, Songand R. Tang,
\newblock {\it Representations and cohomologies of Hom-pre-Lie algebras},
\newblock arXiv preprint,
\newblock arXiv:1902.07360 (2019).

\bibitem {Liu-K-Q}
Liu K.Q,
\newblock{\it Characterizations of the quantum Witt algebra},
\newblock Lett. Math. Phys.
\newblock vol \textbf{24} (1992), 257–265.

\bibitem{Loday-dialgebras}
J.-L. Loday,
\newblock {\it Dialgebras, in: Dialgebras and related Operads, in},
\newblock Lecture Notes in Math,
\newblock vol. \textbf{} (1763) Springer, New York. (2001), 7--66.



\bibitem{Miller1969}
J. B. Miller,
\newblock {\it Baxter operators and endomorphisms on Banach algebras},
\newblock J. Math. Anal. Appl,
\newblock {\bf 25} (1969), 503--520.


\bibitem{Rota1995}
G. -C. Rota,
\newblock {\it Baxter operators},
\newblock In Gian-Carlo Rota on Combinatorics, Introductory Papes and commentaries, edited by Joseph. P. S. Kung,
\newblock $($Birkhauser, Boston, 1995$)$.


\bibitem{WG-BiHom-Lie supealgebras}
Wang. S.,  Guo. S,
\newblock {\it BiHom-Lie superalgebra structures},
\newblock arXiv preprint arXiv:1610.02290.
\bibitem{Mikhalev}
E. A. Vasilieva and A. A. Mikhalev,
\newblock {\it Free left-symmetric superalgebras},
\newblock Fund. and Applied Math
\newblock {\bf  2} (1996), 611--613.

\bibitem{Vinberg}
E. B. Vinberg,
\newblock {\it The theory of homogeneous cones},
\newblock Trudy Moskov
\newblock Mat Obsc {\bf 12} (1963), 303--358.

\bibitem{S-Wang-S-Guo}
S. Wang and S. Guo,
\newblock{\it BiHom-Lie superalgebra structures},
\newblock arXiv preprint
\newblock  arXiv:1610.02290v1 (2016).


%

\end{thebibliography}
\end{document}